\documentclass[11pt]{article}

\usepackage{subfigure}
\usepackage{color}
\usepackage[cmex10]{amsmath}
\usepackage{mathrsfs,amssymb,amsmath}
\usepackage{cases}
\usepackage{graphicx}
\usepackage{caption}

\usepackage{epstopdf}
\usepackage{hyperref}

\usepackage{float}
\usepackage{placeins}

\def\disp{\displaystyle}

\newcommand{\R}{{\mathbb R}}

\def\dref#1{(\ref{#1})}
\def\crr{\cr\noalign{\vskip-0.0mm}}

\usepackage{algorithm} 
\usepackage{algorithmic} 

\newtheorem{corollary}{Corollary}[section]
\newtheorem{remark}{Remark}[section]
\newtheorem{lemma}{Lemma}[section]

\newtheorem{theorem}{Theorem}[section]
\newtheorem{definition}{Definition}[section]

\newenvironment{proof}{{\bf {Proof.}}}{\hfill $\square$}
\allowdisplaybreaks[4]

\numberwithin{equation}{section}

\def\dref#1{(\ref{#1})}

\def\pt{\partial}

\def\ra{\rightarrow}

\def\s{\subseteq}

\def\e{\varepsilon}

\def\ol{\overline}

\def\bf{\textbf}
\def\pt{\partial}

\def\om{\omega}
\def\Om{\Omega}
\def\la{\lambda}
\def\al{\alpha}
\def\be{\beta}
\def\de{\delta}
\def\ga{\gamma}
\def\De{\Delta}

\def\si{\sigma}

\def\ts{\times}

\def\iy{\infty}

\def\f{\frac}

\def\se{\setminus}

\def\df{\mathrm d}

\def\wh{\widehat}

\def\esssup{\operatorname*{ess\ \! sup}}
\def\essinf{\operatorname*{ess\ \! inf}}

\def\mcD{\mathcal{D}}

	\DeclareMathOperator{\Div}{div}

	\DeclareMathOperator{\dist}{dist}

	\newcommand{\N}{\mathbb N}

\begin{document}
	

\title{{\bf   {\bf  Null Controllability for a Multi-Dimensional Degenerate
Parabolic Equation with Degenerated Interior Point}}\footnote{\small This work was carried out with the support of the
National Natural Science Foundation of China under grant  nos. 12131008 and U23B2033, and
National Key R\&D Program of China under grant no. 2024YFA1013101.}}

\author{ Dong-Hui Yang$^{a}$,  Bao-Zhu Guo$^{b}$\footnote{\small
The corresponding author. Email: bzguo@iss.ac.cn}, Jie Zhong$^{c}$
\\
$^a${\it School of Mathematics and Statistics, Central South University}\\
			{\it Changsha 410075, P.R.China}\\
$^b${\it Academy of Mathematics and Systems Science, Academia Sinica, Beijing 100190, China}\\
$^c${\it   Department of Mathematics}\\
{\it California State University Los Angeles, Los Angeles, 90032, USA}
}
\date{}

	\maketitle{}
	\thispagestyle{empty}
	\thispagestyle{empty}
	
	
\begin{abstract}

  In this study, we study the null controllability of a multi-dimensional degenerate parabolic equation characterized by a degenerate interior point. The control domain, which is an arbitrary inner region, does  not encompass the degenerate point. To tackle this problem, we adopt a new  approximation methodology. Specifically, we approximate the degenerate partial differential equations (PDEs) with a series of uniformly elliptic PDEs, notwithstanding their limited regularity. We then derive the Carleman estimate for these approximate uniformly parabolic equations and establish the observability inequality, which ultimately paves the way for demonstrating the null controllability of the system.

\vspace{0.3cm}

\noindent {\bf {Keywords:}}  Degenerate partial differential equations, approximation, Carleman estimate, null controllability.
	
\vspace{0.3cm}
		
		\noindent {\bf {AMS subject classifications (2010):}}~ 35J70, 35K65, 49Q10, 93B05.

	\end{abstract}
	
	\section{Introduction}

 In this paper, we  consider null controllability of the following degenerated  equation:
\begin{equation}\label{12.14.1}
\begin{cases}
\partial_t\varphi-\Div(|x|^\alpha \nabla \varphi)=\chi_\omega f, &\text{in }Q, \\
\varphi=0, &\text{on }\partial Q, \\
\varphi(0)=\varphi_0, &\text{in }\Omega,
\end{cases}
\end{equation}
where $\alpha\in (0,2)$ is a given constant, $\Omega\subset\mathbb{R}^N$ is a bounded domain containing $0$ with $\partial\Omega\in C^3$, $\omega\subset\Omega$ is an open subset such that $\dist(0,\omega)>0$, $Q=\Omega\times (0,T)$ with $T>0$ being a constant, $\varphi_0\in L^2(\Omega)$ is the initial data, and $f\in L^2(Q)$ is the control.
Let $x_0\in\omega$ and $R\in \left(0,\min\{1, \sqrt{\frac{T}{2}}\}\right)$ be such that $\widehat{\omega}=B_{2R}(x_0)\Subset \omega$, that is,
the closure of $\widehat{\omega}$ is compact and contained in $\omega$, and $B_{8R}\subset \Omega$.
Denote
\begin{equation}\label{gbz1}
w=|x|^\alpha.
\end{equation}

	 The controllability of partial differential equations (PDEs) has been extensively studied in the literature. For uniformly elliptic or parabolic PDEs, readers are referred to \cite{FG,Fursikov,Lions,Rousseau,Zuazua}. Regarding degenerate parabolic (or hyperbolic) equations, numerous works have focused on the one-dimensional case, such as \cite{Alabau,Buffe,Cannarsa,Gueye}, while several studies have addressed higher-dimensional scenarios, including \cite{Araruna,Cannarsa1,Guo,Wu}. Notably, \cite{Araruna,Cannarsa1} specifically investigated the two-dimensional case. The works \cite{Wu, Yang1} serve as precursors to the current study. In \cite{Wu}, we established the null controllability of the higher-dimensional equation \eqref{12.14.1} on a special control domain using a cut-off method. However, this special control domain must encompass the degenerate interior point $0\in\Omega$, which inherently limits its applicability. In \cite{Yang1}, we employed both the cut-off and approximation methods to derive the unique continuation property on an annular domain, marking the first attempt to apply the approximation method to degenerate parabolic equations.

In this paper, we have achieved the null controllability of \eqref{12.14.1} on arbitrary inner domains, also relying on the approximation method.
Degenerate PDEs have also garnered significant attention, with notable contributions including \cite{FF,CS2,CS3,Fabes,GC,Heinonen,Trudinger}. The solution spaces for degenerate PDEs are termed weighted Sobolev spaces (see \cite{GC,Heinonen}), which differ from the Sobolev spaces associated with uniformly elliptic PDEs. A natural approach to solving degenerate PDEs involves approximating them with a sequence of uniformly elliptic PDEs, i.e., employing the approximation method. While an effective approximation method utilizing the Calderon-Zygmund decomposition was introduced in \cite{FF,Cavalheiro}, the resulting approximate functions are generally only measurable and lack differentiability. To establish the unique continuation property or observability for degenerate PDEs, it is often necessary for the second-order or higher-order partial derivatives of the coefficients to satisfy certain conditions. Consequently, a novel approximation method must be devised, which is the primary motivation behind this work. Indeed, we previously utilized an approximation method in \cite{Wu1} to obtain the weak unique continuation property for the degenerate elliptic case of \eqref{12.18.1} on arbitrary inner domains. However, this method proved insufficient for the parabolic case, necessitating the development of a more robust approximation technique, which, while technically challenging, is feasible.

Carleman estimates \cite{Alabau,Araruna,Cannarsa,Cannarsa1,FG,Fursikov,Rousseau,Wu,Zuazua} constitute a powerful classical tool for establishing the unique continuation property or observability of PDEs. According to the Hilbert uniqueness method, controllability is equivalent to observability. For uniformly elliptic PDEs, there are at least two primary approaches to achieving controllability: Carleman estimates (\cite{Fursikov}) and the Lebeau-Robbiano spectral inequality (\cite{Rousseau}). In this work, we employ Carleman estimates to derive the observability of equation \eqref{12.14.1} using the approximation method. It is worth noting that the Green formula (or integration by parts) often fails to hold in the context of degenerate PDEs.  We shall demonstrate that the solution of \eqref{12.14.1} lacks sufficient regularity in higher-dimensional cases, rendering many computations based on the Green formula infeasible.  We also explain  that it appears impossible to directly derive Carleman estimates from equation \eqref{12.14.1}; instead, they must be obtained from the approximate uniformly elliptic PDEs. The aforementioned approximation method is suitable for degenerate interior points. For cases involving degenerate part boundaries, an alternative approximation method, known as the shape design method, can be employed, as discussed in \cite{Guo}.

 We proceed as follows. In Section \ref{S2}, we define the solution spaces for equation \eqref{12.14.1} and present a property of its solution. In Section \ref{S3}, we develop the approximation method. In Section \ref{S4}, we derive the Carleman estimate for the approximate uniformly parabolic equations. In Section \ref{S5}, we establish the observability for equation \eqref{12.14.1}.

\section{Solution spaces}\label{S2}

 In this section, we will delineate the solution space for equation \eqref{12.14.1} and elucidate a property of its solutions. To achieve this, we introduce the weighted Sobolev spaces (\cite{GC,Heinonen}).

\subsection{Solution spaces}

 When $p = 1$, a locally integrable non-negative function $w(\cdot)$ is said to be an $A_1$ weight if there exists a constant $c > 0$ such that for all cubes $K \subset \mathbb{R}^N$,
\begin{equation*}
\frac{1}{|K|}\int_K w(x) \, \mathrm{d}x \leq c \cdot \essinf_{K} w.
\end{equation*}
For $p \in (1, \infty)$, a locally integrable non-negative function $w(\cdot)$ is called an $A_p$ weight if there exists a constant $c > 0$ such that for all cubes $K$ in $\mathbb{R}^N$,
\begin{equation}\label{01.18.2}
\left(\frac{1}{|K|}\int_K w(x) \, \mathrm{d}x\right)\left(\frac{1}{|K|}\int_K w(x)^{-\frac{1}{p - 1}} \, \mathrm{d}x\right)^{p - 1} \leq c.
\end{equation}
The infimum of the set of constants $c = c(w, p) > 0$ for which \eqref{01.18.2} holds is referred to as the $A_p$ constant of $w(\cdot)$.
It is evident that $w(\cdot)$ is an $A_{1 + \frac{2}{N}}$ weight, given that $w(\cdot)$ is an $A_p$ weight if and only if $-N < \alpha < N(p - 1)$. Below, we define
\begin{equation*}
w(E) = \int_E w \, \mathrm{d}x.
\end{equation*}
A function $u(\cdot)$ belongs to $L^2(\Omega; w)$ if
\begin{equation*}
\int_\Omega u^2 w \, \mathrm{d}x < \infty.
\end{equation*}
The inner product on $L^2(\Omega; w)$ is given by
\begin{equation*}
(u, v)_{L^2(\Omega; w)} = \int_\Omega uvw \, \mathrm{d}x,
\end{equation*}
and the corresponding norm is
\begin{equation*}
\|u\|_{L^2(\Omega; w)} = \left(\int_\Omega u^2 w \, \mathrm{d}x\right)^{\frac{1}{2}}.
\end{equation*}
It is well-known that $(L^2(\Omega; w), (\cdot, \cdot){L^2(\Omega; w)})$ is a Hilbert space and $(L^2(\Omega; w), \|\cdot\|{L^2(\Omega; w)})$ is a Banach space.
We define
\begin{equation*}
H^{1}(\Omega; w) = \left\{u \in L^2(\Omega; w) \colon \frac{\partial u}{\partial x_i} \in L^2(\Omega; w), \, i = 1, \cdots, N\right\},
\end{equation*}
where $\frac{\partial u}{\partial x_i}, \, i = 1, \cdots, N$ are the distributional derivatives with respect to the spatial variables $x_1, \cdots, x_N$. The inner product on $H^1(\Omega; w)$ is
\begin{equation*}
(u, v)_{H^1(\Omega; w)} = \int_\Omega uvw \, \mathrm{d}x + \sum_{i = 1}^N \int_\Omega \frac{\partial u}{\partial x_i} \frac{\partial v}{\partial x_i} w \, \mathrm{d}x,
\end{equation*}
and the norm is
\begin{equation*}
\|u\|_{H^1(\Omega; w)} = \left(\int_\Omega u^2 w \, \mathrm{d}x + \sum_{i = 1}^N \int_\Omega \left|\frac{\partial u}{\partial x_i}\right|^2 w \, \mathrm{d}x\right)^{\frac{1}{2}}.
\end{equation*}
We define
\begin{equation*}
H_{0}^1(\Omega; w) = \text{the closure of } \mathcal{D}(\Omega) \text{ in } H^1(\Omega; w),
\end{equation*}
where $\mathcal{D}(\Omega) = C_0^\infty(\Omega)$ is the space of test functions.
We denote by $H^{-1}(\Omega; w)$ the dual space of $H_{0}^1(\Omega; w)$, which is a subspace of $\mathcal{D}'(\Omega)$. It is also well-known that $(H^1(\Omega; w), (\cdot, \cdot)_{H^1(\Omega; w)})$ is a Hilbert space and $(H^1(\Omega; w), \|\cdot\|_{H^1(\Omega; w)})$ is a Banach space.

 The subsequent Lemma \ref{08.16.L3} corresponds precisely to \cite[Theorem 1.3]{Fabes}.
\begin{lemma}\label{08.16.L3}
Let $\Omega \subset \mathbb{R}^N$ be an open bounded set. If $w \in A_p$ for $1 < p < \infty$, then there exist constants $C_\Omega$ and $\delta > 0$ such that for all $u \in C_0^\infty(\Omega)$ and all $k \in \left[1, \frac{N}{N - 1} + \delta\right]$,
\begin{equation*}
\|u\|_{L^{kp}(\Omega; w)} \leq C_\Omega \|\nabla u\|_{L^p(\Omega; w)},
\end{equation*}
where $C_\Omega$ depends only on $N$, the $A_p$ constant of $w(\cdot)$, $p$, and the diameter of $\Omega$.
\end{lemma}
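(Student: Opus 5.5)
The plan follows the Fabes--Kenig--Serapioni argument: a pointwise bound of $u$ by a fractional integral of $|\nabla u|$, then a weighted mapping property of that operator. The step expected to carry all the real difficulty is the weighted fractional integral inequality with the gain $k>1$.

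\textbf{Step 1 (pointwise representation).} For $u\in C_0^\infty(\Omega)$ and every $x$, write
\begin{equation*}
|u(x)|\le c_N\int_{\Omega}\frac{|\nabla u(y)|}{|x-y|^{N-1}}\,\mathrm{d}y = c_N\, I_1(|\nabla u|\chi_\Omega)(x).
\end{equation*}
This comes from integrating $u(x)=-\int_0^\infty \partial_r u(x+r\theta)\,\mathrm{d}r$ over $\theta\in S^{N-1}$. The claim therefore reduces to a two-weight-free, one-weight estimate
\begin{equation*}
\|I_1 g\|_{L^{kp}(\Omega;w)}\le C\|g\|_{L^p(\Omega;w)}
\end{equation*}
for $g$ supported in $\Omega$.

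\textbf{Step 2 (splitting $I_1$ at scale $d=\operatorname{diam}\Omega$).} Use a Hedberg-type decomposition. For $0<\rho\le d$, split $I_1 g(x)$ into the parts $|x-y|<\rho$ and $\rho\le |x-y|\le d$. Summing over dyadic annuli and using the doubling property of $A_p$ weights, bound both parts. The near part is at most $C\rho\, Mg(x)$, where $M$ is the Hardy--Littlewood maximal function. For the far part, use H\"older with the weight on each annulus $B_{2^j\rho}(x)$:
\begin{equation*}
\frac{1}{r^{N-1}}\int_{B_r(x)}|g| \le \frac{r}{|B_r|}\Big(\int_{B_r}|g|^pw\Big)^{1/p}\Big(\int_{B_r}w^{-1/(p-1)}\Big)^{(p-1)/p}\le C\, r\, w(B_r(x))^{-1/p}\|g\|_{L^p(w)},
\end{equation*}
where the last inequality is the $A_p$ condition \eqref{01.18.2}.

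\textbf{Step 3 (lower bound on ball mass and exponent gain).} This is the main obstacle. Since $w\in A_p$, there are $C>0$ and a reverse-doubling exponent $\gamma>0$ such that
\begin{equation*}
w(B_r(x))\ge C (r/d)^{Np}\, w(B_d(x)), \qquad \text{for } r\le d .
\end{equation*}
This follows from comparing $B_r$ with $B_d$ via $A_p$. Summing the geometric series then gives a far-part bound
\begin{equation*}
C\,\rho^{1-Np/p}\,\big(\ldots\big)\,\|g\|_{L^p(w)},
\end{equation*}
which is only useful if $p$ is small relative to $N$. I will therefore replace the crude estimate by the sharper one: $w\in A_p$ implies $w\in A_{p-\epsilon}$ (Muckenhoupt's openness, via reverse H\"older). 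This yields
\begin{equation*}
w(B_r)\ge c\,(r/d)^{N(p-\epsilon)}w(B_d).
\end{equation*}
Optimizing $\rho$ between the two parts, as in Hedberg's trick, then gives
\begin{equation*}
|I_1g(x)|\le C\, (Mg(x))^{1/k}\,\|g\|_{L^p(w)}^{1-1/k}
\end{equation*}
for some $k=\frac{N}{N-1}+\delta$. Here $\delta>0$ comes exactly from the openness gap $\epsilon$. This is where care is needed: the small exponent gain must come from the reverse H\"older self-improvement, and all constants must depend only on $N$, $p$, the $A_p$ constant and $d$. For $k=1$ the Hedberg optimization is unnecessary, and the whole range $k\in\bigl[1,\frac{N}{N-1}+\delta\bigr]$ then follows by H\"older interpolation on the finite measure space $(\Omega,w\,\mathrm{d}x)$.

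\textbf{Step 4 (conclusion).} Raise the Step 3 bound to the power $kp$ and integrate against $w$. Then apply Muckenhoupt's theorem, $\|Mg\|_{L^p(w)}\le C\|g\|_{L^p(w)}$ for $w\in A_p$. This gives
\begin{equation*}
\|I_1g\|_{L^{kp}(w)}^{kp}\le C\|g\|_{L^p(w)}^{p}\,\|g\|_{L^p(w)}^{(k-1)p},
\end{equation*}
which is the desired inequality. Taking $g=|\nabla u|$ finishes the proof of Lemma \ref{08.16.L3}. This is exactly \cite[Theorem 1.3]{Fabes}, so the argument may simply be cited.
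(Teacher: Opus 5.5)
The paper gives no argument for this lemma; it only quotes \cite[Theorem 1.3]{Fabes}, so your closing sentence is exactly what the paper does. Your sketch, however, has a genuine error that starts in Step 3 and breaks Step 4 for every $k>1$.

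In the far part, the estimate $w(B_r(x))\ge c\,(r/d)^{N(p-\epsilon)}w(B_d(x))$ leaves a factor $w(B_d(x))^{-1/p}$, which you hid in ``$(\ldots)$'' and then dropped. After Hedberg's optimization the correct pointwise bound is
\[
|I_1g(x)|\le C\,(Mg(x))^{1/k}\Bigl(d^{\,N(p-\epsilon)/p}\,w(B_d(x))^{-1/p}\,\|g\|_{L^p(\Omega;w)}\Bigr)^{1-1/k}.
\]
It is not $C(Mg(x))^{1/k}\|g\|_{L^p(\Omega;w)}^{1-1/k}$ with $C$ depending only on $N$, $p$, the $A_p$ constant and $d$. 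The factor cannot be discarded. Replacing $w$ by $\lambda w$ leaves the $A_p$ constant, $I_1g$ and $Mg$ unchanged, but multiplies your right-hand side by $\lambda^{(1-1/k)/p}$, which tends to $0$ as $\lambda\to 0$.

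The same scaling refutes the conclusion of Step 4, and in fact the lemma as literally stated with the paper's unnormalized norms. Indeed $\|u\|_{L^{kp}(\Omega;\lambda w)}=\lambda^{1/(kp)}\|u\|_{L^{kp}(\Omega;w)}$, while $\|\nabla u\|_{L^{p}(\Omega;\lambda w)}=\lambda^{1/p}\|\nabla u\|_{L^{p}(\Omega;w)}$. So no constant depending only on $N$, $p$, the $A_p$ constant and the diameter can work for any $k>1$. This step cannot be repaired without changing the statement.

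If you keep the factor, your argument does go through: Hedberg's splitting plus Muckenhoupt's openness and maximal theorem are the right tools. For $x\in\Omega$, doubling gives $w(B_d(x))\ge c\,w(B)$, where $B$ is a fixed ball of radius $2d$ centred in $\Omega$. You then obtain
\[
\|u\|_{L^{kp}(\Omega;w)}\le C\,d\,w(B)^{\frac{1}{kp}-\frac{1}{p}}\,\|\nabla u\|_{L^{p}(\Omega;w)},\qquad 1\le k\le \frac{N}{N-1}+\delta,
\]
with $C$ and $\delta$ depending only on $N$, $p$ and the $A_p$ constant. This is the scale-invariant, normalized form of the Fabes--Kenig--Serapioni Sobolev inequality. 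The unnormalized constant necessarily depends on $w(B)$ as well. Your passage to intermediate $k$ by H\"older on $(\Omega,w\,\mathrm{d}x)$ likewise introduces powers of $w(\Omega)$.

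For $k=1$, which is the only case the paper uses (Remark \ref{08.16.R1}, with $p=2$), your argument is complete and the stated dependence is correct. Summing dyadic annuli gives $I_1(|\nabla u|\chi_\Omega)\le C\,d\,M(|\nabla u|\chi_\Omega)$ on $\Omega$, and Muckenhoupt's theorem finishes.

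A minor point: the crude bound $w(B_r)\ge c(r/d)^{Np}w(B_d)$ already gives $k=N/(N-1)$ for every $p$, since the far part is $O(\rho^{1-N})$. Openness is needed only for the extra $\delta$.
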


The constant $C_\Omega$ in Lemma \ref{08.16.L3} is not explicitly given. In the following Lemma \ref{08.15.L1}, we provide a concrete form of $C_\Omega$, which is crucial for this work.
 \begin{lemma}\label{08.15.L1}
Let $N \geq 2$ and $\alpha \in (0, 2)$. Then, for all $u \in H_0^1(\Omega; w)$, the following inequality holds:
\begin{equation*}
(N - 2 + \alpha) \left\||x|^{\frac{\alpha}{2} - 1} u\right\|_{L^2(\Omega)} \leq 2 \|\nabla u\|_{L^2(\Omega; w)}.
\end{equation*}
Furthermore, if $u \in H_0^1(\Omega; w)$, it follows that $u \in L^2(\Omega)$.
\end{lemma}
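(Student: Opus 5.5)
This is a weighted Hardy inequality, and I would prove it by the standard vector-field integration by parts. First prove it for $u\in C_0^\infty(\Omega)$, then pass to $H_0^1(\Omega;w)$ by density. Since $\Omega$ may contain the origin, where the weight behaves like $|x|^{\alpha-2}$, I would not integrate by parts at $0$ directly.

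Fix $u\in C_0^\infty(\Omega)$ and $\varepsilon>0$, and take the vector field $F_\varepsilon(x)=x\,(|x|^2+\varepsilon^2)^{\frac{\alpha}{2}-1}$, which is smooth. A direct computation gives
\begin{equation*}
\Div F_\varepsilon=(|x|^2+\varepsilon^2)^{\frac{\alpha}{2}-2}\bigl(N(|x|^2+\varepsilon^2)+(\alpha-2)|x|^2\bigr)\ \ge\ (N-2+\alpha)(|x|^2+\varepsilon^2)^{\frac{\alpha}{2}-1}.
\end{equation*}
The inequality holds because $\alpha-2<0$ and $|x|^2\le |x|^2+\varepsilon^2$, and $N-2+\alpha>0$ since $N\ge2$ and $\alpha>0$. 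Integrating by parts, with no boundary terms because $u$ has compact support,
\begin{equation*}
(N-2+\alpha)\int_\Omega u^2(|x|^2+\varepsilon^2)^{\frac{\alpha}{2}-1}\,\df x\le\int_\Omega u^2\Div F_\varepsilon\,\df x=-2\int_\Omega u\,\nabla u\cdot F_\varepsilon\,\df x .
\end{equation*}
Since $|F_\varepsilon|\le |x|\,(|x|^2+\varepsilon^2)^{\frac{\alpha}{2}-1}\le (|x|^2+\varepsilon^2)^{\frac{\alpha-1}{2}}$, I split the integrand as
\begin{equation*}
|u|(|x|^2+\varepsilon^2)^{\frac{\alpha}{4}-\frac12}\cdot|\nabla u|(|x|^2+\varepsilon^2)^{\frac{\alpha}{4}}.
\end{equation*}
Cauchy--Schwarz then bounds the right side by $2I_\varepsilon^{1/2}\bigl(\int|\nabla u|^2(|x|^2+\varepsilon^2)^{\alpha/2}\bigr)^{1/2}$, where $I_\varepsilon$ is the left integral. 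Dividing by $I_\varepsilon^{1/2}$ gives
\begin{equation*}
(N-2+\alpha)I_\varepsilon^{1/2}\le 2\Bigl(\int_\Omega|\nabla u|^2(|x|^2+\varepsilon^2)^{\frac{\alpha}{2}}\,\df x\Bigr)^{1/2}.
\end{equation*}
Now let $\varepsilon\to0$. The right side converges to $2\|\nabla u\|_{L^2(\Omega;w)}$ by dominated convergence, since $\Omega$ is bounded and $u$ is smooth. On the left, $(|x|^2+\varepsilon^2)^{\frac{\alpha}{2}-1}$ increases to $|x|^{\alpha-2}$ because the exponent is negative, so monotone convergence gives the inequality for smooth $u$.

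For general $u\in H_0^1(\Omega;w)$, take $u_n\in C_0^\infty(\Omega)$ with $u_n\to u$ in $H^1(\Omega;w)$. The inequality applied to $u_n-u_m$ shows that $|x|^{\frac{\alpha}{2}-1}u_n$ is Cauchy in $L^2(\Omega)$. It also converges a.e. along a subsequence to $|x|^{\frac{\alpha}{2}-1}u$, because $u_n\to u$ in $L^2(\Omega;w)$ and $w>0$ a.e. So the limit is $|x|^{\frac{\alpha}{2}-1}u$, and the inequality passes to the limit.

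Finally, $L^2$ integrability: on the bounded set $\Omega\subset B_D(0)$ we have $|x|^{\alpha-2}\ge D^{\alpha-2}$ since $\alpha<2$. Hence $\int_\Omega u^2\le D^{2-\alpha}\int_\Omega|x|^{\alpha-2}u^2<\infty$, which proves $u\in L^2(\Omega)$.

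The only delicate step is the singularity at the origin. The $\varepsilon$-regularization handles it, and the key point to check is that the sign of $\alpha-2$ makes both the divergence lower bound and the monotone convergence go in the right direction.
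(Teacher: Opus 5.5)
Your proof is correct, and it takes a different, more self-contained route than the paper. The paper gives no argument for the Hardy inequality itself: it simply cites \cite[Lemma 3.1]{Wu} or \cite[Proposition 2.1 (1)]{Stuart}. It derives only the $L^2(\Omega)$ consequence, and it does so exactly as you do, from the lower bound $|x|^{\alpha-2}\ge m^{\alpha-2}$ on $\Omega$ with $m=\sup_\Omega|x|+1$, which gives \eqref{08.15.10}.

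Your regularized field $x(|x|^2+\varepsilon^2)^{\frac{\alpha}{2}-1}$ uses the same mechanism the paper itself uses later, in Lemma~\ref{08.16.L2}, for the approximate weights. There the field is $w_\varepsilon^{1-\frac{2}{\alpha}}x=\psi_\varepsilon^{\alpha-2}x$, and the extra divergence term has the right sign because $\psi_\varepsilon-x\cdot\nabla\psi_\varepsilon\ge 0$ on $B_\varepsilon$. In your case the corresponding quantity is $\psi-x\cdot\nabla\psi=\varepsilon^2/(|x|^2+\varepsilon^2)^{1/2}\ge 0$ with $\psi=(|x|^2+\varepsilon^2)^{1/2}$.

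Your route buys a complete proof with the stated constant. It also explicitly justifies the passage from $C_0^\infty(\Omega)$ to $H_0^1(\Omega;w)$ (Cauchy in the weighted $L^2$ norm, then identification of the limit via a.e.\ convergence), which the paper leaves to the cited references. Because your regularization is $C^\infty$, the integration by parts and the monotone-convergence limit $\varepsilon\to0$ are immediate. The paper's $\psi_\varepsilon$ is only $C^{2,1}$, but it is built to coincide with $|x|$ outside $B_\varepsilon$; that matters for its approximation scheme, not for this lemma. As a by-product, your intermediate inequality is a Hardy inequality for the weight $(|x|^2+\varepsilon^2)^{\alpha/2}$ that is uniform in $\varepsilon$, the analogue of what Lemma~\ref{08.16.L2} provides for $w_\varepsilon$.
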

\begin{proof}
The first part of the lemma can be derived from \cite[Lemma 3.1]{Wu} or \cite[Proposition 2.1 (1)]{Stuart}.
For the proof of the second part, from Lemma \ref{08.15.L1} and $\alpha \in (0, 2)$, we obtain
\begin{equation}\label{12.09.2}
\frac{N - 2 + \alpha}{2m} \|u\|_{L^2(\Omega; w)} \leq \|\nabla u\|_{L^2(\Omega; w)} \text{ with } m := \sup_{x \in \Omega} |x| + 1
\end{equation}
and
\begin{equation}\label{08.15.10}
\frac{N - 2 + \alpha}{2m^{1 - \frac{\alpha}{2}}} \|u\|_{L^2(\Omega)} \leq \|\nabla u\|_{L^2(\Omega; w)},
\end{equation}
which is the Poincar\'{e} inequality.
\end{proof}
\begin{remark}\label{08.16.R1}
If $u \in H_0^1(\Omega; w)$, from Lemma \ref{08.16.L3}, taking $k = 1$ and $p = 2$, we have
\begin{equation}\label{08.16.8}
\|u\|_{L^2(\Omega; w)} \leq C \|\nabla u\|_{L^2(\Omega; w)},
\end{equation}
where the constant $C > 0$ is independent of $u \in H_0^1(\Omega; w)$. This is the Poincar\'{e} inequality in the weighted Sobolev space.
In particular, the norm
\begin{equation}\label{08.19.2}
\|u\|_{H_0^1(\Omega; w)} = \left(\int_\Omega (\nabla u \cdot \nabla u) w \, \mathrm{d}x\right)^{\frac{1}{2}}
\end{equation}
is an equivalent norm in $H_0^1(\Omega; w)$, where $\nabla u = \left(\frac{\partial u}{\partial x_1}, \cdots, \frac{\partial u}{\partial x_N}\right)$ is the gradient of $u(\cdot)$. Hereafter, we use \eqref{08.19.2} to define the norm of $H_0^1(\Omega; w)$.
\end{remark}

The following Lemma \ref{08.15.L4} is \cite[Theorem 3.4]{Wu} or  \cite[Proposition 2.1 (5)]{Stuart}.
\begin{lemma}\label{08.15.L4}
The embedding $H_0^1(\Omega; w) \hookrightarrow L^2(\Omega)$ is compact.
\end{lemma}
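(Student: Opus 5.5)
The plan is to combine the Hardy-type inequality of Lemma \ref{08.15.L1} with the classical Rellich--Kondrachov theorem away from the origin, and to use a cut-off near $0$ to show that the part of the mass carried by a small ball is uniformly small. Let $\{u_n\}$ be bounded in $H_0^1(\Omega;w)$, say $\|\nabla u_n\|_{L^2(\Omega;w)}\le M$. The goal is to extract a subsequence that is Cauchy in $L^2(\Omega)$. By \eqref{08.15.10}, $\{u_n\}$ is bounded in $L^2(\Omega)$. So, after passing to a subsequence, $u_n\rightharpoonup u$ weakly in $L^2(\Omega)$.

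\textbf{Step 1: uniform smallness near the origin.} For $\varepsilon\in(0,1)$, Lemma \ref{08.15.L1} gives
\begin{equation*}
\int_{B_\varepsilon}u_n^2\,\mathrm{d}x\le \varepsilon^{2-\alpha}\int_{B_\varepsilon}|x|^{\alpha-2}u_n^2\,\mathrm{d}x\le \varepsilon^{2-\alpha}\frac{4M^2}{(N-2+\alpha)^2}.
\end{equation*}
Since $\alpha<2$, this tends to $0$ as $\varepsilon\to0$, uniformly in $n$. This is precisely where the restriction $\alpha\in(0,2)$ and $N\ge2$ (so that $N-2+\alpha>0$) enter.

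\textbf{Step 2: compactness away from the origin.} On $\Omega_\varepsilon:=\Omega\setminus \overline{B_\varepsilon}$ the weight satisfies $w\ge\varepsilon^\alpha$. Hence $\|\nabla u_n\|_{L^2(\Omega_\varepsilon)}\le \varepsilon^{-\alpha/2}M$, and $\{u_n\}$ is bounded in $H^1(\Omega_\varepsilon)$. To avoid any regularity issue on the inner boundary, I would instead use a cut-off $\eta_\varepsilon\in C^\infty(\mathbb{R}^N)$ with $\eta_\varepsilon=0$ on $B_{\varepsilon}$, $\eta_\varepsilon=1$ outside $B_{2\varepsilon}$, and $|\nabla\eta_\varepsilon|\le C/\varepsilon$. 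Then $\eta_\varepsilon u_n\in H_0^1(\Omega)$ in the unweighted sense: one approximates by $\mathcal D(\Omega)$ functions, and on the support of $\eta_\varepsilon$ the weighted and unweighted norms are equivalent. Moreover
\begin{equation*}
\|\nabla(\eta_\varepsilon u_n)\|_{L^2(\Omega)}\le \varepsilon^{-\alpha/2}M+\frac{C}{\varepsilon}\|u_n\|_{L^2(\Omega)}\le C(\varepsilon)M.
\end{equation*}
By the classical Rellich theorem on the bounded set $\Omega$, $\{\eta_\varepsilon u_n\}_n$ is precompact in $L^2(\Omega)$.

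\textbf{Step 3: diagonal argument.} Take $\varepsilon_k=1/k$ and extract successive subsequences so that $\{\eta_{\varepsilon_k}u_n\}$ converges in $L^2(\Omega)$ for every $k$, then take a diagonal subsequence. Since $\|(1-\eta_\varepsilon)u_n\|_{L^2}^2\le\int_{B_{2\varepsilon}}u_n^2$, we get
\begin{equation*}
\|u_n-u_m\|_{L^2}\le \|\eta_{\varepsilon_k}(u_n-u_m)\|_{L^2}+2\sup_j\|u_j\|_{L^2(B_{2\varepsilon_k})}.
\end{equation*}
Using Step 1, we first choose $k$ so that the last term is small, and then choose $n,m$ large. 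This shows the sequence is Cauchy in $L^2(\Omega)$. Its limit coincides with the weak limit $u$, which proves compactness; continuity of the embedding is \eqref{08.15.10}.

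\textbf{Main obstacle.} The only delicate point is justifying that Lemma \ref{08.15.L1} and the cut-off step apply to general elements of $H_0^1(\Omega;w)$ and not just to test functions. I would handle this by proving every estimate for $u\in\mathcal D(\Omega)$ and passing to the limit by density, using the definition of $H_0^1(\Omega;w)$ as a closure together with Fatou's lemma for the Hardy term.
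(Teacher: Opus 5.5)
Your proof is correct. It differs from the paper only in that the paper gives no argument for Lemma \ref{08.15.L4}: it simply quotes Theorem 3.4 of Wu et al.\ or Proposition 2.1(5) of Stuart.

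Your route is self-contained. It uses only ingredients already in the paper plus the classical Rellich theorem:
\begin{itemize}
\item The Hardy inequality of Lemma \ref{08.15.L1} gives the uniform tail bound $\int_{B_\varepsilon}u^2\,\mathrm{d}x\le \frac{4\varepsilon^{2-\alpha}}{(N-2+\alpha)^2}\|\nabla u\|_{L^2(\Omega;w)}^2$.
\item The cut-off functions $\eta_\varepsilon u_n$ lie in the unweighted $H_0^1(\Omega)$. You verify this correctly by approximating with $\eta_\varepsilon\phi_j$, $\phi_j\in\mathcal{D}(\Omega)$, and using $w\ge\varepsilon^\alpha$ on the support of $\eta_\varepsilon$.
\item Their $H^1$ norms are bounded in terms of $\varepsilon$ and the weighted norm. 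Rellich then applies on the bounded set $\Omega$ with no use of boundary regularity.
\item A diagonal argument finishes the proof.
\end{itemize}

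Compared with the citation, your argument shows exactly where the hypotheses enter: $\alpha<2$ through the factor $\varepsilon^{2-\alpha}$, and $N-2+\alpha>0$ through the Hardy constant. It also gives an explicit tightness estimate near the degenerate point. The citation is shorter but hides these dependencies.

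Two small points to tidy up:
\begin{itemize}
\item Take $0\le\eta_\varepsilon\le 1$, so that $\|(1-\eta_\varepsilon)(u_n-u_m)\|_{L^2}\le\|u_n\|_{L^2(B_{2\varepsilon})}+\|u_m\|_{L^2(B_{2\varepsilon})}$.
\item Apply Step 1 with radius $2\varepsilon_k$.
\end{itemize}
Your density/Fatou remark is harmless but unnecessary, since Lemma \ref{08.15.L1} is already stated for all $u\in H_0^1(\Omega;w)$.
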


We define
\begin{equation*}
L^2(0, T; H^1(\Omega; w)) = \left\{\varphi \in L^2(Q; w) \colon \frac{\partial \varphi}{\partial x_i} \in L^2(Q; w), \, i = 1, \cdots, N\right\},
\end{equation*}
where $Q = \Omega \times (0, T)$. Its inner product is
\begin{equation*}
(\varphi, \psi)_{L^2(0, T; H^1(\Omega; w))} = \iint_Q \varphi \psi w \, \mathrm{d}x \, \mathrm{d}t + \iint_Q (\nabla \varphi \cdot \nabla \psi) w \, \mathrm{d}x \, \mathrm{d}t,
\end{equation*}
and its norm is
\begin{equation*}
\|\varphi\|_{L^2(0, T; H^1(\Omega; w))}^2 = \iint_Q \varphi^2 w \, \mathrm{d}x \, \mathrm{d}t + \iint_Q |\nabla \varphi|^2 w \, \mathrm{d}x \, \mathrm{d}t.
\end{equation*}
Then $(L^2(0, T; H^1(\Omega; w)), (\cdot, \cdot){L^2(0, T; H^1(\Omega; w))})$ is a Hilbert space and $(L^2(0, T; H^1(\Omega; w)), \|\cdot\|{L^2(0, T; H^1(\Omega; w))})$ is a Banach space. The same conclusions hold for $L^2(0, T; H_0^1(\Omega; w))$.
By \eqref{08.16.8} in Remark \ref{08.16.R1}, the space $L^2(0, T; H_0^1(\Omega; w))$ has an equivalent norm
\begin{equation*}
\|\varphi\|_{L^2(0, T; H_0^1(\Omega; w))}^2 = \iint_Q |\nabla \varphi|^2 w \, \mathrm{d}x \, \mathrm{d}t.
\end{equation*}
We define
\begin{equation*}
W = \left\{\varphi \in L^2(0, T; H_0^1(\Omega; w)) \colon \partial_t \varphi \in L^2(0, T; H^{-1}(\Omega; w))\right\}.
\end{equation*}
It is clear that $W \subset C([0, T]; L^2(\Omega))$. Moreover, $W \hookrightarrow L^2(Q)$ is compact.
\begin{definition}\label{08.16.D1}
We call $\varphi \in W$ a \textit{weak solution} of \eqref{12.14.1} if
\begin{equation*}
-\iint_Q \varphi \partial_t \psi \, \mathrm{d}x \, \mathrm{d}t + \iint_Q (\nabla \varphi \cdot \nabla \psi) w \, \mathrm{d}x \, \mathrm{d}t = \iint_Q f \psi \, \mathrm{d}x \, \mathrm{d}t + \int_\Omega \varphi_0(x) \psi(x, 0) \, \mathrm{d}x
\end{equation*}
for any $\psi \in W$ with $\psi(T) = 0$.
\end{definition}
The following lemma is Lemma 3.7 of  \cite{FF}.
\begin{lemma}\label{12.09.L1}
Let $\varphi_0 \in L^2(\Omega)$, $S = \sum_{i = 1}^N \frac{\partial f_i}{\partial x_i} \in L^2(0, T; H_w^{-1}(\Omega))$, and $g \in L^2(Q; w^{-1})$. If $\varphi \in L^2(0, T; H_0^1(\Omega; w))$ is a weak solution of the problem
\begin{equation}\label{06.07.2}
\begin{cases}
\partial_t \varphi - \mathcal{A} \varphi = g - S, & \text{in } Q, \\
\varphi = 0, & \text{on } \partial Q, \\
\varphi(0) = \varphi_0, & \text{in } \Omega,
\end{cases}
\end{equation}
then
\begin{equation}\label{12.09.1}
\begin{split}
&\sup_{t \in [0, T]} \int_\Omega |\varphi(x, t)|^2 \, \mathrm{d}x + \iint_Q |\nabla \varphi(x, t)|^2 w \, \mathrm{d}x \, \mathrm{d}t \\
&\leq C \left(\|\varphi_0\|_{L^2(\Omega)}^2 + \|g\|_{L^2(Q; w^{-1})}^2 + \sum_{j = 1}^N \|f_j\|_{L^2(\Omega; w^{-1})}^2\right),
\end{split}
\end{equation}
where the constant $C > 0$ depends only on $\alpha$, $N$, and $\Omega$.
Moreover, if $g \in L^2(Q)$ and $S = 0$, then there exists a unique solution $u \in L^2(0, T; H_0^1(\Omega; w))$ of \eqref{06.07.2}, and
\begin{equation}\label{06.07.4}
\begin{split}
\sup_{t \in [0, T]} \int_\Omega |\varphi(x, t)|^2 \, \mathrm{d}x + \iint_Q |\nabla \varphi(x, t)|^2 w \, \mathrm{d}x \, \mathrm{d}t
&\leq C \left(\|\varphi_0\|_{L^2(\Omega)}^2 + \|g\|_{L^2(Q)}^2\right).
\end{split}
\end{equation}
\end{lemma}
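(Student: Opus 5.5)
The plan is the standard energy method. We test the equation with the solution itself and then use Gronwall's inequality. For the uniqueness and existence part we use Galerkin approximation (or Lions' theorem), with the weighted Poincaré inequality of Remark \ref{08.16.R1} supplying coercivity. Here $\mathcal{A}\varphi=\Div(w\nabla\varphi)$, and $W\subset C([0,T];L^2(\Omega))$ makes the time derivative pairing meaningful. Since $\varphi\in W$, we may take $\varphi\chi_{(0,t)}$ as a test function, after a standard Steklov-average regularization in time. This gives, for a.e. $t\in(0,T)$,
\begin{equation*}
\frac12\int_\Omega|\varphi(t)|^2\,\mathrm{d}x+\int_0^t\!\!\int_\Omega|\nabla\varphi|^2w\,\mathrm{d}x\,\mathrm{d}s=\frac12\|\varphi_0\|_{L^2(\Omega)}^2+\int_0^t\!\!\int_\Omega g\varphi\,\mathrm{d}x\,\mathrm{d}s+\int_0^t\!\!\int_\Omega\sum_{j}f_j\partial_j\varphi\,\mathrm{d}x\,\mathrm{d}s.
\end{equation*}
The identity $\frac{\mathrm d}{\mathrm dt}\frac12\|\varphi\|_{L^2}^2=\langle\partial_t\varphi,\varphi\rangle$ needs a Lions–Magenes type argument in the triple $H_0^1(\Omega;w)\hookrightarrow L^2(\Omega)\hookrightarrow H^{-1}(\Omega;w)$. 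That chain is legitimate by Lemma \ref{08.15.L1}, which gives density and continuity of the first embedding. I expect this justification to be the main technical obstacle, but the argument is routine once the embedding is known.

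Next we estimate the source terms. For the divergence term we write $f_j\partial_j\varphi=(f_jw^{-1/2})(\partial_j\varphi\, w^{1/2})$ and apply Young's inequality, which gives the bound $\frac14\iint|\nabla\varphi|^2w+C\sum_j\|f_j\|^2_{L^2(w^{-1})}$.

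For the term $g\in L^2(Q;w^{-1})$ we write $g\varphi=(gw^{-1/2})(\varphi w^{1/2})$. We then use the weighted Poincaré inequality \eqref{12.09.2}, namely $\|\varphi\|_{L^2(\Omega;w)}\le \frac{2m}{N-2+\alpha}\|\nabla\varphi\|_{L^2(\Omega;w)}$, to absorb this term as well: it is bounded by $\frac14\iint|\nabla\varphi|^2w+C\|g\|^2_{L^2(Q;w^{-1})}$. The constant depends only on $\alpha,N,\Omega$.

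For the case $g\in L^2(Q)$ with $S=0$, we bound $\int g\varphi\le\frac12\|g\|^2_{L^2}+\frac12\|\varphi\|^2_{L^2}$. We then apply Gronwall's inequality to $y(t)=\|\varphi(t)\|_{L^2}^2$, which yields a constant depending also on $T$. Alternatively, we can use \eqref{08.15.10} to absorb the $L^2$ norm into the gradient term, which gives a $T$-independent constant.

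Taking the supremum over $t$ and adding the resulting inequalities yields \eqref{12.09.1} and \eqref{06.07.4}. Uniqueness then follows by linearity: the difference of two solutions solves the problem with zero data, so by \eqref{06.07.4} it vanishes.

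For existence we use a Faedo–Galerkin scheme. Because Lemma \ref{08.15.L4} gives compactness of $H_0^1(\Omega;w)\hookrightarrow L^2(\Omega)$, the inverse of $-\mathcal{A}$ is compact and self-adjoint on $L^2(\Omega)$, so we can take its eigenfunctions as the Galerkin basis. The a priori bound \eqref{06.07.4}, which holds uniformly for the Galerkin approximants, gives weak limits in $L^2(0,T;H_0^1(\Omega;w))$. We then pass to the limit in Definition \ref{08.16.D1}. Finally, reading $\partial_t\varphi=\mathcal{A}\varphi+g$ from the equation places $\partial_t\varphi$ in $L^2(0,T;H^{-1}(\Omega;w))$, since $L^2(\Omega)\subset H^{-1}(\Omega;w)$ by Lemma \ref{08.15.L1}. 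Hence $\varphi\in W$.
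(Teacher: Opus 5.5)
Your proposal is correct and follows essentially the same route as the paper: derive the energy identity by testing with $\varphi$ on $\Omega\times(0,t)$, apply Young's inequality with the weights $w^{\pm 1/2}$, and absorb the $g$-term via the weighted Poincar\'e inequality \eqref{12.09.2}; for $g\in L^2(Q)$, absorb the $L^2$ term via \eqref{08.15.10}, which is the variant that keeps $C$ independent of $T$, unlike Gronwall. Your justification of the energy identity through the triple $H_0^1(\Omega;w)\hookrightarrow L^2(\Omega)\hookrightarrow H^{-1}(\Omega;w)$, together with the Galerkin existence and uniqueness argument, supplies details that the paper leaves out.
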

\begin{proof}
 It is easily verified that
\begin{equation}\label{06.07.3}
\begin{split}
&\iint_{\Omega \times (0, \tau)} w \nabla \varphi \cdot \nabla \varphi \, \mathrm{d}x \, \mathrm{d}t + \frac{1}{2} \int_\Omega |\varphi(x, \tau)|^2 \, \mathrm{d}x \\
&= \frac{1}{2} \int_\Omega |\varphi_0|^2 \, \mathrm{d}x + \iint_{\Omega \times (0, \tau)} g \varphi \, \mathrm{d}x \, \mathrm{d}t + \iint_{\Omega \times (0, \tau)} \sum_{i = 1}^N \frac{\partial \varphi}{\partial x_i} f_i \, \mathrm{d}x \, \mathrm{d}t \\
&\leq \frac{1}{2} \int_\Omega |\varphi_0|^2 \, \mathrm{d}x + \frac{1}{2\varepsilon} \iint_{\Omega \times (0, \tau)} g^2 w^{-1} \, \mathrm{d}x \, \mathrm{d}t + \varepsilon \iint_{\Omega \times (0, \tau)} \varphi^2 w \, \mathrm{d}x \, \mathrm{d}t \\
&\quad + \frac{1}{2\varepsilon} \iint_{\Omega \times (0, \tau)} \sum_{i = 1}^N f_i^2 w^{-1} \, \mathrm{d}x \, \mathrm{d}t + \varepsilon \iint_{\Omega \times (0, \tau)} |\nabla \varphi|^2 w \, \mathrm{d}x \, \mathrm{d}t
\end{split}
\end{equation}
for every $\tau \in (0, T)$ and for any $\varepsilon > 0$. Taking $\varepsilon = \frac{1}{4} \min\left\{\left(\frac{N - 2 + \alpha}{2m}\right)^2, 1\right\}$, by \eqref{12.09.2}, we obtain \eqref{12.09.1}.
Finally, if $g \in L^2(\Omega)$ and $S = 0$, we have
\begin{equation*}
\begin{split}
2 \iint_{\Omega \times (0, \tau)} g \varphi \, \mathrm{d}x \, \mathrm{d}t
&\leq \iint_{\Omega \times (0, \tau)} g^2 \, \mathrm{d}x \, \mathrm{d}t + \iint_{\Omega \times (0, \tau)} \varphi^2 \, \mathrm{d}x \, \mathrm{d}t \\
&\leq \iint_{\Omega \times (0, \tau)} g^2 \, \mathrm{d}x \, \mathrm{d}t + C \iint_{\Omega \times (0, \tau)} |\nabla \varphi|^2 w \, \mathrm{d}x \, \mathrm{d}t
\end{split}
\end{equation*}
by Lemma \ref{08.15.L1}.
We then use this inequality in the first inequality in \eqref{06.07.3} to obtain \eqref{06.07.4}.
\end{proof}

From Lemma \ref{12.09.L1}, we obtain the following Corollary \ref{12.11.C1}.
\begin{corollary}\label{12.11.C1}
Under the assumptions in Lemma \ref{12.09.L1} with $g = f_i = 0, \, i = 1, \cdots, N$, there holds
\begin{equation*}
\int_\Omega |\varphi(x, \tau)|^2 \, \mathrm{d}x \leq \int_\Omega |\varphi(x, t)|^2 \, \mathrm{d}x
\end{equation*}
for all $0 \leq t \leq \tau \leq T$.
\end{corollary}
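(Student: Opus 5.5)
The plan is to rerun the energy identity \eqref{06.07.3} on a general interval $(t,\tau)$ rather than on $(0,\tau)$. With $g=f_i=0$ we have $S=0$, and the weak solution $\varphi\in W$ lies in $L^2(0,T;H_0^1(\Omega;w))$ with $\partial_t\varphi\in L^2(0,T;H^{-1}(\Omega;w))$. I would first establish that $s\mapsto \int_\Omega|\varphi(x,s)|^2\,\mathrm{d}x$ is absolutely continuous and that, for a.e. $s$,
\begin{equation*}
\frac{\mathrm{d}}{\mathrm{d}s}\,\frac12\int_\Omega|\varphi(x,s)|^2\,\mathrm{d}x=\langle \partial_t\varphi(s),\varphi(s)\rangle_{H^{-1}(\Omega;w),H_0^1(\Omega;w)}.
\end{equation*}
This is the Lions--Magenes type lemma for the Gelfand triple $H_0^1(\Omega;w)\hookrightarrow L^2(\Omega)\hookrightarrow H^{-1}(\Omega;w)$. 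Lemma \ref{08.15.L1} gives the continuous (and Lemma \ref{08.15.L4} the compact) embedding $H_0^1(\Omega;w)\hookrightarrow L^2(\Omega)$, and the density of $\mathcal{D}(\Omega)$ gives density. This is the same fact behind $W\subset C([0,T];L^2(\Omega))$ stated earlier, so I take it as available.

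Next, I would test the equation with $\varphi$ itself on $(t,\tau)$. Mollifying in time if necessary, I insert $\psi=\varphi\,\eta_k$ into Definition \ref{08.16.D1}, where $\eta_k$ is a Lipschitz cutoff approximating $\chi_{(t,\tau)}$, and pass to the limit $k\to\infty$ using continuity in $L^2(\Omega)$. This yields, for all $0\le t\le\tau\le T$,
\begin{equation*}
\frac12\int_\Omega|\varphi(x,\tau)|^2\,\mathrm{d}x+\int_t^\tau\!\!\int_\Omega|\nabla\varphi|^2w\,\mathrm{d}x\,\mathrm{d}s=\frac12\int_\Omega|\varphi(x,t)|^2\,\mathrm{d}x,
\end{equation*}
which is exactly \eqref{06.07.3} with the initial time shifted from $0$ to $t$ and $g=f_i=0$. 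Since $w\ge0$, the gradient term is nonnegative, and dropping it gives the claimed monotonicity.

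An alternative route avoids re-deriving the identity. By uniqueness from Lemma \ref{12.09.L1} (the case $g=0\in L^2(Q)$, $S=0$), the restriction of $\varphi$ to $[t,T]$ is the weak solution with initial datum $\varphi(t)\in L^2(\Omega)$, so \eqref{06.07.3} applies directly on the shifted time interval.

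The only delicate step is justifying the energy identity for a merely weak solution, that is, the chain rule for $\|\varphi(s)\|_{L^2}^2$ in the weighted triple. I would handle it by Steklov averaging in time, which is legitimate because only the Hilbert-triple structure is used and no spatial regularity of $\varphi$ or of $w$ is needed.
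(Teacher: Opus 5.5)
Your proposal is correct and matches the paper's argument: the corollary is read off from the energy identity \eqref{06.07.3} with $g=f_i=0$, which gives $\frac12\int_\Omega|\varphi(x,\tau)|^2\,\mathrm{d}x=\frac12\int_\Omega|\varphi_0|^2\,\mathrm{d}x-\iint_{\Omega\times(0,\tau)}|\nabla\varphi|^2w\,\mathrm{d}x\,\mathrm{d}s$, a non-increasing function of $\tau$, so neither shifting the initial time nor invoking uniqueness is actually needed. Your Lions--Magenes justification of the chain rule in the triple $H_0^1(\Omega;w)\hookrightarrow L^2(\Omega)\hookrightarrow H^{-1}(\Omega;w)$ supplies rigor that the paper skips with ``it is easily verified''.
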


\subsection{A property of solution of \eqref{12.14.1}}\label{S2.2}

 In this section, we present a property of the solution to \eqref{12.14.1} (Lemma \ref{05.02.L1}). This property indicates that the integration by parts formula
\begin{equation}\label{05.08.3}
\iint_Q \Div(A \nabla u)(A\nabla \phi\cdot \nabla u)\df x=\iint_\Om A \nabla u \cdot\nabla (A\nabla\phi\cdot\nabla u)\df x+\iint_{\pt\Om} (A \nabla u\cdot \nu) (A\nabla\phi\cdot \nabla u)\df x
\end{equation}
fails to hold when $\phi = |x|^{2-\al}$. It is worth noting that the equality \eqref{05.08.3} plays a crucial role in the Carleman estimate (Section \ref{S4.1}). The derivation of this property involves the quotient difference (\cite[Chapter 5.8.2, p. 277]{Evans})
\begin{equation*}
\pt_k^hu(x)=\f{u(x+h e_k)-u(x)}{h}, \quad x\in\R^N, \ 0\neq h\in\R, \ k\in J_N,
\end{equation*}
where $\{e_k=(0,\cdots, 0,1,0,\cdots,0)\}_{k\in\N}$ is the standard basis of $\R^N$. We also define
\begin{equation*}
u^h(x)=u(x+h) \quad \text{for } x\in \R^N, \ 0\neq h\in\R.
\end{equation*}
For functions $v,\xi$, and indices $k,i\in J_N$ and $0\neq h\in\R$, the following identities hold:
\begin{equation*}
\begin{split}
\int_\Om v\pt_k^{-h}\xi \df x=-\int_\Om \xi \pt_k^h v\df x, \quad \pt_i\pt_k^h v=\pt_k^h\pt_iv,
\end{split}
\end{equation*}
and
\begin{equation*}
\pt_k^h(v\xi)=v^h \pt_k^h \xi +(\pt_k^h v)\xi.
\end{equation*}
We denote $\nabla=(\pt_1,\cdots, \pt_N)=(\pt_{x_1},\cdots, \pt_{x_N})$.
\begin{lemma}\label{05.04.L1}
Let $u\in H_0^1(\Om;w)$. Then for any $U\Subset \Om$, there exists a constant $C>0$ such that for all $0<|h|<\f{1}{2}\dist(U,\pt\Om)$,
\begin{equation*}
\int_U (\pt_k^hu)^2 \left(|x|^{\al+2}\right)^h\df x\leq C\int_\Om |\pt_k u|^2w\df x.
\end{equation*}
\end{lemma}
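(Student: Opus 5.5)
We first prove the estimate for $u\in C_0^\infty(\Om)$ and then pass to general $u\in H_0^1(\Om;w)$ by density. Fix $U\Subset\Om$ and $0<|h|<\f12\dist(U,\pt\Om)$. For $x\in U$ write the difference quotient as an integral along the segment:
\begin{equation*}
\pt_k^h u(x)=\int_0^1 \pt_k u(x+sh e_k)\,\df s .
\end{equation*}
Multiplying by the weight and applying Cauchy--Schwarz in $s$ gives
\begin{equation*}
(\pt_k^hu(x))^2\,|x+he_k|^{\al+2}\le \int_0^1 |\pt_k u(x+she_k)|^2\,|x+he_k|^{\al+2}\,\df s .
\end{equation*}
Here we read $\left(|x|^{\al+2}\right)^h$ as $|x+he_k|^{\al+2}$, i.e.\ the shift in the direction $e_k$ used in $\pt^h_k$.

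The key step is a pointwise weight comparison. We want to bound $|x+he_k|^{\al+2}$ by a constant times $|x+she_k|^{\al}$, uniformly for $s\in[0,1]$, $x\in U$ and admissible $h$. Set $y=x+she_k$. Then
\begin{equation*}
|x+he_k|\le |y|+(1-s)|h|\le |y|+|h|,
\end{equation*}
so $|x+he_k|^{\al+2}\le C(|y|+|h|)^{\al+2}$. This is where the extra power $2$ in the weight enters. We have $|x+he_k|^{\al+2}=|x+he_k|^{2}\,|x+he_k|^{\al}$. In the region $|y|\ge |h|$ we get $|x+he_k|\le 2|y|$, hence $|x+he_k|^{\al+2}\le 2^{\al+2}m^2|y|^\al$. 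In the region $|y|<|h|$ we only get $|x+he_k|^{\al+2}\le (2|h|)^{\al+2}$, while $|y|^\al$ may be much smaller, since the segment may pass near the origin. So a pure pointwise bound fails there, and this is the main obstacle.

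To overcome it, we will not bound pointwise but integrate in $x$ first, using Fubini. After the change of variables $y=x+she_k$ (Jacobian $1$, and $y\in\Om$ by the choice of $h$), we must bound
\begin{equation*}
\int_0^1\!\!\int_{U+she_k}|\pt_ku(y)|^2\,|y+(1-s)he_k|^{\al+2}\,\df y\,\df s .
\end{equation*}
The awkward region $\{|y|<|h|\}$ is where the weight $|y|^\al$ degenerates. There we instead use the Hardy-type bound of Lemma \ref{08.15.L1}, which controls $\int |x|^{\al-2}u^2$, together with the cruder representation
\begin{equation*}
(\pt^h_ku)^2\le 2h^{-2}\bigl(u(x+he_k)^2+u(x)^2\bigr).
\end{equation*}
On the set where $|x|\lesssim|h|$ or $|x+he_k|\lesssim|h|$, the factor $|x+he_k|^{\al+2}h^{-2}$ is bounded by $C|z|^{\al-2}$ at the relevant point $z\in\{x,\,x+he_k\}$, because there $|x+he_k|\lesssim |h|\lesssim |z|$ or the roles are symmetric. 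Lemma \ref{08.15.L1} then bounds this near-origin contribution by $C\int_\Om|\nabla u|^2w$.

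For this splitting to close with only $\pt_ku$ on the right-hand side, we apply the Hardy inequality in its one-dimensional form along lines parallel to $e_k$ (or, failing that, accept $|\nabla u|$). Away from the origin, $|y|\ge c|h|$, the pointwise comparison above applies directly and yields $C\int_\Om|\pt_ku|^2w$. Combining the two regions and letting $C$ depend on $\al,N,m$ gives the claim for smooth $u$.

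Finally, for $u\in H_0^1(\Om;w)$ take $u_n\in C_0^\infty(\Om)$ with $u_n\to u$ in $H_0^1(\Om;w)$. For fixed $h$, $\pt_k^hu_n\to\pt_k^hu$ in $L^2(U)$ by Lemma \ref{08.15.L1}, which gives $u\in L^2(\Om)$ with continuous embedding. Since the weight is bounded on $U$, the estimate passes to the limit.
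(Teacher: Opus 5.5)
Your argument is correct in substance, but it takes a different route from the paper's. The paper never splits $U$. It applies the discrete product rule to the $C^1$ multiplier $|x|^{1+\frac{\alpha}{2}}$:
\[
\bigl(|x|^{1+\frac{\alpha}{2}}\bigr)^h\,\partial_k^h u=\partial_k^h\bigl(|x|^{1+\frac{\alpha}{2}}u\bigr)-\bigl(\partial_k^h|x|^{1+\frac{\alpha}{2}}\bigr)u .
\]
It controls the first term by the classical unweighted difference-quotient estimate, since $|\partial_k(|x|^{1+\frac{\alpha}{2}}u)|^2\le C(u^2+|x|^{\alpha}|\partial_k u|^2)$. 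It controls the commutator by the mean-value/convexity bound $|\partial_k^h|x|^{1+\frac{\alpha}{2}}|\le C(|x|^{\frac{\alpha}{2}}+|x+he_k|^{\frac{\alpha}{2}})$.

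You instead estimate $\partial_k^h u$ directly and split according to the distance to the origin relative to $|h|$. This avoids the commutator and shows why the exponent $\alpha+2$ is needed. Far from $0$ the extra factor is $|y|^2\le m^2$; near $0$ it is $O(|h|^2)$ and absorbs the $h^{-2}$ of the crude bound. Both routes reach the same intermediate bound $C\int_\Omega (u^2+|x|^{\alpha}|\partial_k u|^2)\,\mathrm{d}x$.

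Three steps need tightening.

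First, the split has to be made in $x$, not in $(y,s)$. Once you have used Cauchy--Schwarz in $s$, you cannot switch to the crude representation on part of the $(x,s)$-domain. Take $U_1=\{x\in U: |x|\le 2|h|\}$. For $x\in U\setminus U_1$, every point $y$ of the segment satisfies $|y|\ge|h|$, so your pointwise comparison holds for all $s$. For $x\in U_1$ one has $|x+he_k|\le 3|h|$, hence $h^{-2}|x+he_k|^{\alpha+2}\le 3^{\alpha+2}|h|^{\alpha}\le C$.

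Second, your stated reason ``$|h|\lesssim|z|$'' is backwards (take $x=0$). No Hardy weight is needed on $U_1$: the Poincar\'e part \eqref{08.15.10} of Lemma \ref{08.15.L1} suffices.

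Third, a one-dimensional Hardy inequality with weight $|x|^{\alpha-2}$ along lines parallel to $e_k$ fails for $\alpha\le 1$. On lines through or near the origin this weight is not uniformly integrable, so that is not how to keep only $\partial_k u$ on the right. What works is the directional weighted Poincar\'e inequality
\[
\int_\Omega u^2\,\mathrm{d}x\le \frac{m^{2-\alpha}}{2-\alpha}\int_\Omega |x_k|^{\alpha}|\partial_k u|^2\,\mathrm{d}x .
\]
To prove it, write $u$ on each line as the integral of $\partial_k u$ from $x_k$ to the endpoint $\pm m$ on the same side of $0$. Then apply Cauchy--Schwarz with weight $|t|^{\alpha}$ and use $\int_0^m\!\int_t^m s^{-\alpha}\,\mathrm{d}s\,\mathrm{d}t=\frac{m^{2-\alpha}}{2-\alpha}<\infty$ for $\alpha<2$.

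The paper's own proof stops with $\int_\Omega u^2$ on the right, i.e.\ with $|\nabla u|$ after Lemma \ref{08.15.L1}. So your fallback of accepting $|\nabla u|$ is already at the level of what the paper proves.
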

\begin{proof}
The proof deviates from the classical case due to the non-transform invariance of the measure $w\df x$. We assume $u\in C^\iy(\ol\Om)$.
To establish $(|x|^{\f{\al}{2}+1})^h\pt_k^hu\in L^2(U)$ for any $k\in J_N$, we observe that
\begin{equation*}
\pt_k^h \left(|x|^{\f{\al}{2}+1}u\right)=\left(|x|^{\f{\al}{2}+1}\right)^h \pt_k^h u+\left(\pt_k^h|x|^{\f{\al}{2}+1}\right)u.
\end{equation*}
Thus, it suffices to show that
\begin{equation}\label{05.04.1}
\pt_k^h\left(|x|^{\f{\al}{2}+1}u\right), \left(\pt_k^h|x|^{\f{\al}{2}+1}\right)u\in L^2(\Om).
\end{equation}
First, we prove
\begin{equation}\label{05.04.2}
\left|\pt_k^h |x|^{\f{\al}{2}+1}\right|\leq C\left(|x|^{\f{\al}{2}}+|x+he_k|^{\f{\al}{2}}\right),
\end{equation}
where the constant $C>0$ depends only on $\al$ and $m$.
We consider the one-dimensional case $g(x)=|x+a|^{\f{\al}{2}+1}$ for $x\in (-m,m)$ and $a\in\R$. Since $g(x)$ ($0<\al<2$) is convex for $x\in (0,m)$ as
\begin{equation*}
g''(x)=\f{\al}{2}\left(\f{\al}{2}+1\right)|x+a|^{\f{\al}{2}-1}\geq 0,
\end{equation*}
we have
\begin{equation*}
\left|\pt_k^h|x|^{\f{\al}{2}+1}\right|\leq \f{\al}{2} \left(\f{\al}{2}+1\right)\max\{|x|^{\f{\al}{2}}, |x+h|^{\f{\al}{2}}\}.
\end{equation*}
This implies \eqref{05.04.2}.
From \eqref{05.04.2}, we obtain
\begin{equation}\label{06.07.5}
\begin{split}
\int_\Om \left|\left(|x|^{\f{\al}{2}+1}\right)^hu\right|^2\df x
&\leq C\left(\int_\Om |x|^\al u^2\df x+\int_\Om |x+he_k|^{\al}u^2\df x\right)<+\iy,
\end{split}
\end{equation}
where the constant $C>0$ depends only on $\al$ and $m$. This proves the second part of \eqref{05.04.1}.
For any $x\in U, k\in J_N$, and $0<|h|<\dist(U,\pt\Om)$, we have
\begin{equation*}
 |x+h|^{\f{\al}{2}+1}u(x+he_k)-|x|^{\f{\al}{2}+1}u(x)\leq h\int_0^1 \left[\pt_k\left(|x|^{\f{\al}{2}+1}u\right)\right](x+ t h e_k)\df t
\end{equation*}
due to $|x|^{\f{\al}{2}+1}\in C^1(\ol\Om)$ and $u\in C^\iy(\ol\Om)$. Consequently,
\begin{equation*}
\begin{split}
\int_U \left|\pt_k^h \left(|x|^{\f{\al}{2}+1}u\right)\right|^2\df x
&\leq \int_U \int_0^1 \left[\left(\f{\al}{2}+1\right) |x|^{\f{\al}{2}-1}x_k u+|x|^{\f{\al}{2}+1}\pt_k u\right]^2(x+
t h e_k)\df t\df x\\
&\leq C\int_\Om \left(u^2+|x|^\al |\pt_k u|^2\right)\df x,
\end{split}
\end{equation*}
where the constants $C>0$ depend only on $\al$ and $m$.  This proves the first part of \eqref{05.04.1}.
\end{proof}
\begin{remark}
From the proof of Lemma \ref{05.04.L1}, we cannot derive the following classical results:
\begin{equation*}
\int_U (\pt_k^hu)^2w\df x\leq C\int_\Om |\nabla u|^2w\df x \text{ or }   \int_U (\pt_k^hu)^2w^h\df x\leq C\int_\Om |\nabla u|^2w\df x.
\end{equation*}
This issue arises from the non-transform invariance of the measure $w\df x$ and the fact that $w=|x|^\al\notin C^1(\Om)$ (see \eqref{06.07.5} and the non-meaningfulness of $\int_\Om |x+he_k|^{\al-2}u^2\df x$ for $0\neq h\in\R$).
\end{remark}
\begin{lemma}\label{05.02.L1}
Let $u(\cdot)$ be a solution of equation \eqref{12.14.1}. Then for any $U\Subset \Om$,
\begin{equation*}
|x|^{3+\f{3\al}{2}}\f{\pt^2u}{\pt x_j\pt x_k}\in  L^2(U\ts (0,T)), \quad \forall j,k\in J_N.
\end{equation*}
\end{lemma}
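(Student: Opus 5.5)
We will prove the interior weighted $H^2$ estimate by the difference-quotient method of Nirenberg (\cite[Chapter 5.8.2]{Evans}), adapted to the weight as in Lemma \ref{05.04.L1}. Throughout we assume $u$ smooth (or work with a regularized solution) and obtain a priori bounds in terms of $\|u_0\|_{L^2(\Om)}$ and $\|f\|_{L^2(Q)}$, which pass to the limit; the time variable is handled by the energy identity \eqref{06.07.3} with a time cut-off $\eta(t)$ vanishing near $t=0$. This is harmless because $L^2(U\times(0,T))$-membership only needs integrability near $t=0$, where the lower-order energy bound of Lemma \ref{12.09.L1} already applies after the weights are absorbed.

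\textbf{Step 1: test function.} Fix $U\Subset V\Subset\Om$ and a cut-off $\zeta\in C_0^\infty(V)$ with $\zeta=1$ on $U$. Test the weak formulation in Definition \ref{08.16.D1} with
\[
\psi=-\pt_k^{-h}\big(\zeta^2 \rho\, \pt_k^h u\big),\qquad \rho=|x|^{\beta},
\]
where the power $\beta$ (eventually $\beta=6+2\al$, matching $\big(|x|^{3+\f{3\al}{2}}\big)^2$) is chosen large enough that every weight produced by differentiating $\rho$ or translating $w$ stays integrable. The summation-by-parts identity $\int v\pt_k^{-h}\xi=-\int\xi\pt_k^hv$ moves the difference quotient onto the flux $w\nabla u$. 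The product rule $\pt_k^h(w\nabla u)=w^h\nabla\pt_k^hu+(\pt_k^hw)\nabla u$ then gives a principal term $\iint \zeta^2\rho\, w^h|\nabla \pt_k^h u|^2$ together with error terms involving $\pt_k^h w$, $\nabla\rho$, $\nabla\zeta$ and $\chi_\om f$.

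\textbf{Step 2: bounding the errors.} The term with $\pt_k^hw$ is controlled by the analogue of \eqref{05.04.2}: $|\pt_k^h|x|^\al|\le C(|x|^{\al-1}+|x+he_k|^{\al-1})$, which for $\al<1$ is singular. Since $\rho$ vanishes to high order at $0$, the products $\rho|x|^{\al-1}$ and $\rho|x+he_k|^{\al-1}$ are bounded by $C(|x|^{\beta/2}+|x+he_k|^{\beta/2})\,(w^h)^{1/2}w^{1/2}$-type expressions. Young's inequality then absorbs them into the principal term plus $C\iint|\nabla u|^2w$. This is exactly where the exponent $3+\f{3\al}{2}$ comes from: it is a convenient choice making all these weights dominate $|x|^{\al-2}$-type singularities via Lemma \ref{08.15.L1}. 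The $\nabla\rho$ and $\nabla\zeta$ terms are handled the same way. The time-derivative term gives $\f{d}{dt}\f12\int\zeta^2\rho(\pt_k^hu)^2$, which is bounded using Lemma \ref{05.04.L1}, since $\rho\le C(|x|^{\al+2})^h$-comparable weights are available there. The source term is estimated by Cauchy--Schwarz together with Lemma \ref{05.04.L1}.

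\textbf{Step 3: passing to the limit.} We obtain $\iint_{U\times(0,T)} |x|^{\beta}w^h|\nabla\pt_k^hu|^2\le C$ uniformly in $h$. A weak-compactness argument in $L^2$ then shows that $|x|^{\beta/2}w^{1/2}\nabla\pt_ku\in L^2$. Since $|x|^{\beta/2}\ge |x|^{3+\f{3\al}{2}}$ on bounded sets when $\beta$ is large, and the weight $w$ can be traded against powers of $|x|$, this yields the claim for all $j,k$.

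\textbf{Main obstacle.} The main obstacle is Step 2. The weight $w$ is neither translation invariant nor $C^1$, so $\pt_k^hw$ is not uniformly controlled by $w$ or $w^h$ near $0$ and near $-he_k$ (as noted in the Remark after Lemma \ref{05.04.L1}). The approach we would try first is to choose $\beta$ so that the weights $|x|^\beta|x+he_k|^{\al-1}$ remain bounded by $C\,(|x|^{\al}+|x+he_k|^{\al})$ uniformly in small $h$. If this pointwise comparison fails, we would split the domain into the regions $|x|<2|h|$ and $|x|\ge 2|h|$ and estimate each region separately.
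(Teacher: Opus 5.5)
Your plan (Nirenberg difference quotients with a weight vanishing at the origin, then a weak-limit argument) is the same in spirit as the paper's. However, Step~2 does not close with the $h$-independent weight $\rho=|x|^\beta$, and this is exactly the difficulty the paper's proof is built around.

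The shift creates a second degenerate point $x=-he_k$. There $w^h=|x+he_k|^\alpha$ vanishes, but $\rho\approx|h|^\beta$ does not. With principal term $\iint\zeta^2\rho\,w^h|\nabla\partial_k^hu|^2$, Young's inequality on the cross term $\iint\zeta^2\rho\,(\partial_k^hw)\nabla u\cdot\nabla\partial_k^hu$ leaves $\iint\zeta^2\rho\,(\partial_k^hw)^2(w^h)^{-1}|\nabla u|^2$. Bounding this by $C\iint|\nabla u|^2w$ requires $\rho\,(\partial_k^hw)^2\le C\,w\,w^h$, and the same condition appears if you put the shift on the other factor. Near $x=-he_k$ one has $|\partial_k^hw|\approx|h|^{\alpha-1}$, so the left side is of order $|h|^{\beta+2\alpha-2}$ while the right side tends to $0$. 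Enlarging $\beta$ improves the power of $|h|$, not the singularity in $x$.

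This also undermines the other parts of your Step~2. Your claim that $\rho|x+he_k|^{\alpha-1}$ is dominated by $(|x|^{\beta/2}+|x+he_k|^{\beta/2})(w^h)^{1/2}w^{1/2}$ fails for the same reason: the right side vanishes at $x=-he_k$ to order $\alpha/2$, faster than the left side. The comparison you propose to try first is not the relevant condition. Splitting into $|x|<2|h|$ and $|x|\ge 2|h|$ leaves the bad point in the inner region, and you offer no estimate there.

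The missing idea is an $h$-dependent weight vanishing at both points. The paper tests with $v=-\partial_k^{-h}[\zeta^2|x|^{2+\alpha}(|x|^{4+\alpha})^h\partial_k^hu]$, so the principal term carries $|x|^{2+\alpha}(|x|^{4+2\alpha})^h$. The cross terms then reduce to the boundedness of $|x|^{1+\alpha/2}\partial_k^h|x|^\alpha$ together with Lemma~\ref{05.04.L1}. The exponent comes out as $3+\frac{3\alpha}{2}=\frac12[(2+\alpha)+(4+2\alpha)]$ in the limit $h\to0$. Separately, in Step~3 the inequality $|x|^{\beta/2}\ge|x|^{3+3\alpha/2}$ for large $\beta$ is backwards on $|x|<1$: a larger power gives a weaker conclusion.

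Your treatment of $t$ near $0$ is also not harmless. A cut-off vanishing near $t=0$ gives the bound only on $U\times(\delta,T)$ for each $\delta>0$. Lemma~\ref{12.09.L1} controls $\nabla u$ in $L^2(Q;w)$, not second derivatives. Without the cut-off, your time term would need $\int\zeta^2\rho\,(\partial_k^h\varphi_0)^2$ bounded uniformly in $h$.

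In fact no bound in terms of $\|\varphi_0\|_{L^2(\Omega)}$ and $\|f\|_{L^2(Q)}$ is possible. On the part of $U$ away from the origin the equation is uniformly parabolic with smooth coefficients. Already for the heat equation, $\int_0^T\|D^2e^{t\Delta}u_0\|_{L^2}^2\,dt<\infty$ forces $u_0\in H^1$.

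The paper avoids the time variable altogether. It treats $g=\chi_\omega f-\partial_tu$ as an $L^2(\Omega)$ right-hand side and proves a weighted interior estimate for the elliptic problem $-\mathrm{div}(w\nabla u)=g$ through the eigenfunctions $\Phi_n$, which tacitly uses $\partial_tu\in L^2$. Your argument likewise needs a hypothesis of that kind, e.g.\ $\varphi_0\in H_0^1(\Omega;w)$, which yields $\partial_tu\in L^2(Q)$ by testing with $\partial_tu$. The lower-order energy bound cannot replace it.
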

\begin{proof}
This is essentially a problem of elliptic regularity. We only need to prove the case where $u\in H^1(\Om;w)$ is a weak solution of the following system
\begin{equation}\label{06.07.1}
\begin{cases}
-\Div(w\nabla u)=g \quad \text{in } \Om,\\
g=\chi_\om f-\pt_t u\in L^2(\Om),
\end{cases}
\end{equation}
and then
\begin{equation}\label{06.07.6}
|x|^{3+\f{3\al}{2}}\f{\pt^2u}{\pt x_j\pt x_k}\in L^2(U), \quad \forall j,k\in J_N.
\end{equation}
The partial differential operator $-\Div(|x|^\al \nabla\bullet)$ has a discrete point spectrum
\begin{equation*}
0<\la_1<\la_2\leq \la_3\leq \cdots\ra+\iy
\end{equation*}
from Lemma \ref{08.15.L4}. Then there exists an orthonormal basis $\{\Phi_n\}_{n\in\N}\s L^2(\Om)$
(\cite[Theorem 7, Appendix E, p.645]{Evans}), and moreover, $\{\la_n^\f{1}{2}\Phi_n\}_{n\in\N}$ is an orthonormal basis of $H_0^1(\Om;w)$, where $\Phi_n$ is the eigenfunction of $-\Div(|x|^\al \nabla\bullet)$ corresponding to the eigenvalue $\la_n\ (n\in\N)$. We denote $g=\sum_{n\in\N}g_n\Phi_n$ with $\{g_n\}_{n\in\N}\s\R$, then $u=\sum_{n\in\N}g_n\la_n^{-1}\Phi_n$ is the solution of \eqref{06.07.1}. Hence, to show \eqref{06.07.6}, it suffices to show
\begin{equation*}
|x|^{3+\f{3\al}{2}}\f{\pt^2\Phi_n}{\pt x_j\pt x_k}\in L^2(U), \quad \forall j,k\in J_N.
\end{equation*}

We split the proof into two steps.

{\it Step 1. } Let $W$ be an open set satisfying $U\Subset W\Subset \Om$. Take $\zeta\in C^\iy(\Om), 0\leq \zeta\leq 1$ and
\begin{equation*}
\zeta=1 \quad \text{on }U,\quad \zeta=0  \text{ on }\R^N\se W.
\end{equation*}
Let $0<|h|<\f{1}{2}\min\{\dist(U,\pt W), \dist(W, \pt U)\}$. For any $k\in J_N$, choose
\begin{equation*}
v=-\pt_k^{-h}\left[\zeta^2|x|^{2+\al}(|x|^{4+\al})^h\pt_k^hu\right],
\end{equation*}
where $\pt_k^hu$ is the difference quotient.
It is evident that $v\in H_0^1(\Om;w)$. Multiplying $v$ on both sides of the following equation
\begin{equation}\label{06.07.7}
\begin{cases}
-\Div(w\nabla \Phi_n)=\la_n\Phi_n, &\text{in } \Om, \\
\Phi_n=0, &\text{on }\pt\Om,
\end{cases}
\end{equation}
we obtain
\begin{equation}\label{05.02.2}
\int_\Om |x|^\al \nabla \Phi_n\cdot \nabla v\df x=\la_n\int_\Om \Phi_n v\df x.
\end{equation}
We have
\begin{equation*}
\begin{split}
&\int_\Om |x|^\al \nabla \Phi_n\cdot\nabla v\df x\\
&=\sum_{i=1}^N\int_\Om \pt_k^h\left(|x|^\al \pt_i\Phi_n\right)\pt_i\left[\zeta^2|x|^{2+\al}(|x|^{4+\al})^h\pt_k^h\Phi_n\right]\df x\\
&=\sum_{i=1}^N\int_\Om \left[(|x|^\al)^h \pt_k^h\pt_i\Phi_n+(\pt_k^h|x|^\al)\pt_i\Phi_n\right]\\
&\hspace{14.5mm}\ts \bigg[2\zeta(\pt_i\zeta)|x|^{2+\al}(|x|^{4+\al})^h\pt_k^h\Phi_n+(2+\al)\zeta^2|x|^{\al}x_i(|x|^{4+\al})^h\pt_k^h\Phi_n\\
&\hspace{20mm}+(4+\al)\zeta^2|x|^{2+\al}(|x+h|^{2+\al}(x+h)_i)\pt_k^h\Phi_n+\zeta^2|x|^{2+\al}(|x|^{4+\al})^h \pt_k^h\pt_i\Phi_n\bigg]\df x\\
&=\sum_{i=1}^N\int_\Om \zeta^2|x|^{2+\al}(|x|^{4+2\al})^h (\pt_k^h\pt_i\Phi_n)^2\df x+A_1+A_2,
\end{split}
\end{equation*}
where $(x+h)_i$ is the $i$-th component of $x+h$, with
\begin{equation*}
\begin{split}
A_1
&=\sum_{i=1}^N\int_\Om (|x|^\al)^h\pt_k^h\pt_i\Phi_n\bigg[2\zeta(\pt_i\zeta)|x|^{2+\al}(|x|^{4+\al})^h\pt_k^h\Phi_n+(2+\al)\zeta^2|x|^{\al}x_i(|x|^{4+\al})^h\pt_k^h\Phi_n\\
&\hspace{40mm}+(4+\al)\zeta^2|x|^{2+\al}\left(|x+h|^{2+\al}(x+h)_i\right)\pt_k^h\Phi_n\bigg]\df x,
\end{split}
\end{equation*}
and
\begin{equation*}
\begin{split}
A_2
&=\sum_{i=1}^N\int_\Om (\pt_k^h|x|^\al)\pt_i\Phi_n\bigg[2\zeta(\pt_i\zeta)|x|^{2+\al}(|x|^{4+\al})^h\pt_k^h\Phi_n+(2+\al)\zeta^2|x|^{\al}x_i(|x|^{4+\al})^h\pt_k^h\Phi_n\\
&\hspace{20mm}+(4+\al)\zeta^2|x|^{2+\al}\left(|x+h|^{2+\al}(x+h)_i\right)\pt_k^h\Phi_n+\zeta^2|x|^{2+\al}(|x|^{4+\al})^h \pt_k^h\pt_i\Phi_n\bigg]\df x.
\end{split}
\end{equation*}

Next, we estimate the terms $A_1$ and $A_2$. We have
\begin{equation*}
\begin{split}
A_1
&\leq \f{1}{4}\sum_{i=1}^N\int_\Om \zeta^2|x|^{2+\al}(|x|^{4+2\al})^h (\pt_k^h\pt_i\Phi_n)^2\df x\\
&\hspace{4.5mm}+16\sum_{i=1}^N\int_\Om(\pt_i\zeta)^2|x|^{2+\al}(|x|^{4+2\al})^h(\pt_k^h\Phi_n)^2\df x+8(2+\al)\int_\Om \zeta^2|x|^{\al}(|x|^{4+2\al})^h(\pt_k^h\Phi_n)^2\df x\\
&\hspace{4.5mm}+8(4+\al)\int_\Om \zeta^2|x|^{2+\al}(|x|^{2+2\al})^h(\pt_k^h\Phi_n)^2\df x,
\end{split}
\end{equation*}
and
\begin{equation*}
\begin{split}
A_2
&\leq\f{1}{4}\sum_{i=1}^N\int_\Om \zeta^2|x|^{2+\al}(|x|^{4+2\al})^h (\pt_k^h\pt_i\Phi_n)^2\df x+2\sum_{i=1}^N\int_\Om \zeta^2|x|^{2+\al}(|x|^{4})^h(\pt_k^h|x|^\al)^2(\pt_i\Phi_n)^2\df x\\
&\hspace{4.5mm}+\int_\Om \zeta^2|x|^\al|\nabla \Phi_n|^2\df x+\sum_{i=1}^N\int_\Om (\pt_i\zeta)^2|x|^{4+\al}(|x|^{8+2\al})^h(\pt_k^h|x|^\al)^2(\pt_k^h\Phi_n)^2\df x\\
&\hspace{4.5mm}+\int_\Om \zeta^2|x|^\al|\nabla \Phi_n|^2\df x+\f{2+\al}{4}\sum_{i=1}^N\int_\Om \zeta^2|x|^{2+\al}(|x|^{8+2\al})^h(\pt_k^h|x|^\al)^2(\pt_k^h\Phi_n)^2\df x\\
&\hspace{4.5mm}+\int_\Om \zeta^2|x|^\al |\nabla \Phi_n|^2\df x+\f{2+\al}{4}\sum_{i=1}^N\int_\Om \zeta^2|x|^{4+\al}(|x|^{6+2\al})^h (\pt_k^h|x|^\al)^2(\pt_k^h\Phi_n)^2\df x.
\end{split}
\end{equation*}
Similar to \eqref{05.04.2}, we obtain
\begin{equation*}
|x|^{1+\f{\al}{2}}\pt_k^h|x|^\al =\pt_k^h|x|^{\al+1+\f{\al}{2}}-(|x|^\al)^h\pt_k^h|x|^{1+\f{\al}{2}}
\end{equation*}
is a bounded function on $\Om$.
Then, from Lemma \ref{05.04.L1},
we have
\begin{equation*}
\begin{split}
|A_1|, |A_2|
&\leq \f{1}{4}\sum_{i=1}^N\int_\Om \zeta^2|x|^{2+\al}(|x|^{4+2\al})^h (\pt_k^h\pt_i\Phi_n)^2\df x+C\|\Phi_n\|_{H^1(\Om;w)}^2,
\end{split}
\end{equation*}
where the constants $C>0$ depend only on $\al$ and $m$. These show that
\begin{equation}\label{05.04.3}
\int_\Om |x|^\al \nabla \Phi_n\cdot \nabla v\df x\geq \f{1}{2}\sum_{i=1}^N\int_\Om \zeta^2|x|^{2+\al}(|x|^{4+2\al})^h (\pt_k^h\pt_i\Phi_n)^2\df x-C\|\Phi_n\|_{H^1(\Om;w)}^2,
\end{equation}
where the constants $C>0$ depend only on $\al$ and $m$.

{\it Step 2.}  Note that
\begin{equation*}
\begin{split}
\la_n\int_\Om \Phi_nv\df x
&=\la_n\int_\Om \zeta^2|x|^{2+\al}(|x|^{4+\al})^h(\pt_k^h\Phi_n)^2\df x\leq C\la_n \|\Phi_n\|_{H^1(\Om;w)}^2
\end{split}
\end{equation*}
from Lemma \ref{05.04.L1}, where the constant $C>0$ depends only on $\al$ and $m$. This, together with \eqref{05.04.3}, yields
\begin{equation}\label{05.04.4}
\sum_{i=1}^N\int_\Om \zeta^2|x|^{2+\al}(|x|^{4+2\al})^h (\pt_k^h\pt_i\Phi_n)^2\df x\leq C\la_n\|\Phi_n\|_{H^1(\Om;w)}^2,
\end{equation}
where the constant $C>0$ depends only on $\al$ and $m$.

Finally, since
\begin{equation*}
(|x|^{1+\f{\al}{2}})^h\pt_k^h\pt_i\Phi_n=\pt_k^h(|x|^{1+\f{\al}{2}}\pt_i\Phi_n)-(\pt_k^h|x|^{1+\f{\al}{2}})(\pt_i\Phi_n),
\end{equation*}
and noting that $|x|^{1+\f{\al}{2}}\pt_i\Phi_n\in L^2(\Om)$, we have
\begin{equation*}
\begin{split}
&\int_U |x|^{2+\al} (|x|^{2+\al})^h \left[\pt_k^h(|x|^{1+\f{\al}{2}}\pt_i\Phi_n)\right]^2\df x\\
&=\int_U|x|^{2+\al}(|x|^{4+2\al})^h(\pt_k^h\pt_i\Phi_n)^2\df x+\int_U |x|^{2+\al}(|x|^{2+\al})^h \left(\pt_k^h |x|^{1+\f{\al}{2}}\right)^2(\pt_i\Phi_n)^2\df x<+\iy
\end{split}
\end{equation*}
by Lemma \ref{05.04.L1}. That is, $|x|^{1+\f{\al}{2}}(|x|^{1+\f{\al}{2}})^h \pt_k^h(|x|^{1+\f{\al}{2}}\pt_i\Phi_n)\in L^2(U)$ for all $0<|h|<\f{1}{2}\dist (U,\pt\Om)$. Hence, there exists a function $v_k\in L^2(U)$ and a subsequence $h_l\ra 0$ such that
\begin{equation*}
|x|^{1+\f{\al}{2}}(|x|^{1+\f{\al}{2}})^{h_l} \pt_k^h (|x|^{1+\f{\al}{2}}\pt_i\Phi_n)\ra v_k   \text{ in } L^2(U).
\end{equation*}
For any $\psi\in \mcD(U)$, we have
\begin{equation*}
\begin{split}
\int_U v_k\psi\df x
&=\lim_{h_l\ra 0}\int_U \left[|x|^{1+\f{\al}{2}}(|x|^{1+\f{\al}{2}})^{h_l} \pt_k^h (|x|^{1+\f{\al}{2}}\pt_i\Phi_n)\right] \psi\df x\\
&=\lim_{h_l\ra 0}\int_U \left[\pt_k^h(|x|^{1+\f{\al}{2}}\pt_i\Phi_n)\right] \left[|x|^{1+\f{\al}{2}}(|x|^{1+\f{\al}{2}})^{h_l}\psi\right]\df x\\
&=-\lim_{h_l\ra 0}\int_U \left(|x|^{1+\f{\al}{2}}\pt_i\Phi_n\right)\pt_k^{-h}\left[|x|^{1+\f{\al}{2}}(|x|^{1+\f{\al}{2}})^{h_l}\psi\right]\df x\\
&=-\int_U|x|^{1+\f{\al}{2}}(\pt_i\Phi_n) \pt_k \left(|x|^{2+\al}\psi\right)\df x\\
&=-(2+\al)\int_U |x|^{1+\f{3\al}{2}}x_k (\pt_i\Phi_n)\psi \df x-\int_U |x|^{3+\f{3\al}{2}}(\pt_i\Phi_n)\pt_k\psi\df x,
\end{split}
\end{equation*}
which implies that
\begin{equation*}
\pt_k \left(|x|^{3+\f{3\al}{2}}(\pt_i\Phi_n)\right)=v_k+(2+\al)|x|^{1+\f{3\al}{2}}x_k\pt_i\Phi_n\in L^2(U).
\end{equation*}
This completes the proof of the lemma.
\end{proof}
\begin{remark}\label{05.16.R1}
From Lemma \ref{05.02.L1}, we have $|x|^{3+\f{3\al}{2}}\pt_{x_ix_j}u\in L^2(\Om)$ for $i,j\in J_N$. However, it is challenging to obtain $|x|^\be \pt_{x_ix_j}u\in L^2(\Om)$ for $\be<3+\f{3\al}{2}$. This makes the first step \eqref{05.08.3} of the Carleman estimate (or, the subsequent computation and estimation) impossible. This is the distinction between the higher-dimensional case and the one-dimensional case. This is why we employ the approximation method in the following.
There is limited literature on the properties of the second derivatives of degenerate partial differential equations. However, for unique continuation properties or observability inequalities, the existence of some form of second or higher derivatives of the degenerate partial differential equations is a fundamental necessity.
\end{remark}

\section{Approximations}\label{S3}

 In this section, we employ uniformly parabolic equations to approximate the equation \eqref{12.14.1}. To this end, we introduce weight functions to approximate the $A_{1+\frac{2}{N}}$ weight function $w(\cdot)$ (Lemma \ref{12.17.L1}). Subsequently, we present one of the main results of this paper, Theorem \ref{08.16.T1}.
\begin{lemma}\label{12.17.L1}
There exists a $C^{2,1}$ function $\psi_\e: \mathbb{R}\to\mathbb{R}$ with $\e\in (0,\frac{1}{2})$ such that
\begin{equation*}
\psi_\e(x)=|x| \quad \text{for } |x|\geq \e, \quad |x|\leq \psi_\e(x)\leq 2\e \quad \text{on } [-\e, \e],
\end{equation*}
and
\begin{equation}\label{12.16.1}
\psi_\e\geq \frac{\e}{4} \quad \text{on } \mathbb{R}, \quad |\psi_\e'|\leq C   \text{ on } \mathbb{R}, \quad |\psi_\e''|\leq \frac{C}{\e},  \text{ and } |\psi_\e'''|\leq \frac{C}{\e^2}  \text{ on } \mathbb{R},
\end{equation}
where the constants $C>0$ are absolute.
\end{lemma}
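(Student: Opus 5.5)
Our plan is to construct $\psi_\e$ explicitly by scaling a single fixed profile. We first build one even function $\psi:\R\to\R$ with $\psi\in C^{2,1}(\R)$, $\psi(x)=|x|$ for $|x|\geq 1$, $|x|\leq\psi(x)\leq 2$ on $[-1,1]$, and $\psi\geq \frac14$ on $\R$. Then we set
\begin{equation*}
\psi_\e(x)=\e\,\psi\!\left(\frac{x}{\e}\right).
\end{equation*}
The scaling immediately gives $\psi_\e(x)=|x|$ for $|x|\geq\e$, $|x|\leq\psi_\e\leq 2\e$ on $[-\e,\e]$, and $\psi_\e\geq \e/4$. It also gives $\psi_\e'(x)=\psi'(x/\e)$, $\psi_\e''(x)=\e^{-1}\psi''(x/\e)$ and $\psi_\e'''(x)=\e^{-2}\psi'''(x/\e)$ a.e. So \eqref{12.16.1} holds with the absolute constant $C=\max\{\|\psi'\|_\iy,\|\psi''\|_\iy,\|\psi'''\|_{L^\iy}\}$, where $\psi'''$ is the a.e.\ derivative of the Lipschitz function $\psi''$.

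The only real work is to construct the profile $\psi$. On $[-1,1]$ we take an even polynomial
\begin{equation*}
p(x)=a_0+a_2x^2+a_4x^4+a_6x^6,
\end{equation*}
and match $|x|$ to second order at $x=1$: $p(1)=1$, $p'(1)=1$, $p''(1)=0$. By evenness the matching then also holds at $x=-1$. The resulting glued function is $C^2$. Its second derivative is piecewise smooth and continuous, hence Lipschitz, so $\psi\in C^{2,1}$; its third derivative is bounded a.e. A cleaner alternative is to write $\psi(x)=\int_0^{|x|}\int_0^s \rho$ plus a constant, with $\rho$ a smooth or piecewise-linear bump supported in $[-1,1]$ and $\int_0^1\rho=1$. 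Then $\psi'=\pm1$ outside $[-1,1]$, and the constant is chosen so that $\psi(1)=1$.

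The main obstacle is checking the pointwise bounds $|x|\leq\psi\leq 2$ and $\psi\geq\frac14$ on $[-1,1]$. For this we use convexity. If $\psi''\geq 0$, then $\psi$ lies above its tangent lines $\pm x$ continued from $x=\pm1$, which gives $\psi\geq|x|$. Convexity together with evenness also gives $\psi\leq\max_{[-1,1]}\psi=\psi(\pm1)=1\leq 2$. It therefore remains to pick the profile with $\psi''\geq0$ and $\psi(0)\geq\frac14$. In the bump construction, $\psi(0)=1-\int_0^1\!\int_0^s\rho\,\mathrm{d}r\,\mathrm{d}s=\int_0^1 r\rho(r)\,\mathrm{d}r$. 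This is at least $\frac14$ as long as $\rho\geq0$ puts enough mass away from $0$; for example, the uniform density $\rho=1$ on $[0,1]$ gives $\frac12$. We then smooth the corners of $\rho$ to a Lipschitz function (e.g.\ $\rho(r)=\frac32(1-r^2)$ on $[0,1]$, which gives $\psi(0)=\frac38$), so that $\psi''=\rho(|x|)$ is Lipschitz. With this choice all the listed properties follow.
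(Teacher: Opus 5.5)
Your proposal is correct and is essentially the paper's construction: with $\rho(r)=\frac32(1-r^2)$ your scaled profile is exactly the paper's quartic $\psi_\e(x)=\frac{3\e}{8}+\frac{3}{4\e}x^2-\frac{1}{8\e^3}x^4$ on $[-\e,\e]$, so the $x^6$ term in your first polynomial attempt is unnecessary. Your scaling argument, the convexity check of $|x|\le\psi_\e\le 2\e$ and $\psi_\e\geq \frac{3\e}{8}$, and reading $\psi_\e'''$ as an a.e.\ derivative supply verifications that the paper only asserts.
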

\begin{proof}
We consider the following polynomial function
\begin{equation*}
\psi_\e(x)=\sum_{i=0}^4a_ix^{4-i},
\end{equation*}
where $a_0, \cdots, a_4$ are constants to be determined. We impose the conditions
\begin{equation*}
\begin{split}
\psi_\e(-\e)=\e, \quad \psi_\e(\e)=\e, \quad \psi_\e'(-\e)=-1, \quad \psi_\e'(\e)=1, \quad \psi_\e''(\pm \e)=0,
\end{split}
\end{equation*}
which yield the solution
\begin{equation}\label{12.20.1}
\psi_\e(x)=\frac{3\e}{8}+\frac{3}{4\e}x^2-\frac{1}{8\e^3}x^4, \quad x\in[-\e,\e].
\end{equation}
This function satisfies the required properties.
\end{proof}
\begin{remark}\label{06.07.R1}
Let $x\in\mathbb{R}^N$, and consider the function $\psi_\e(x)=\psi_\e(|x|)$ defined in Lemma \ref{12.17.L1}. We have
\begin{equation}\label{12.18.1}
\begin{split}
\nabla\psi_\e
&=\frac{3}{2\e}x-\frac{1}{2\e^3}|x|^2x, \quad
\frac{\partial^2 \psi_\e}{\partial x_i\partial x_j}
=\delta_{ij}\left[\frac{3}{2\e}-\frac{1}{2\e^3}|x|^2\right]-\frac{1}{\e^3}x_ix_j, \\
\Delta\psi_\e
&=N\left[\frac{3}{2\e}-\frac{1}{2\e^3}|x|^2\right]-\frac{1}{\e^3}|x|^2, \\
(D^2\psi_\e)\nabla\psi_\e
&=\left(\frac{3}{2\e}-\frac{1}{2\e^3}|x|^2\right)^2x-\frac{1}{\e^3}\left(\frac{3}{2\e}-\frac{1}{2\e^3}|x|^2\right)|x|^2x
\end{split}
\end{equation}
on $B_\e$,  and
\begin{equation}\label{12.18.2}
\begin{split}
\nabla \psi_\e=\frac{x}{|x|}, \quad \frac{\partial^2\psi_\e}{\partial x_i\partial x_j}=\delta_{ij}|x|^{-1}-x_ix_j|x|^{-3}, \quad \Delta\psi_\e=(N-1)|x|^{-1}, \quad (D^2\psi_\e)\nabla\psi_\e=0
\end{split}
\end{equation}
on $\Omega\setminus B_\e$.
\end{remark}
Denote
\begin{equation*}
\begin{split}
w_\e(x)= (\psi_\e(x))^\al=(\psi_\e(|x|))^\al, \quad x\in\Omega.
\end{split}
\end{equation*}
\begin{lemma}\label{01.18.L1}
Let $0<\e\ll 1$. Then the function $w_\e(\cdot)$ is an $A_{1+\frac{2}{N}}$ weight. Moreover, there exists a constant $C>0$ independent of $\e$ such that the $A_p$ constant $c(w_\e, 1+\frac{2}{N})\leq C$.
\end{lemma}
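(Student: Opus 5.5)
We need $w_\e=\psi_\e(|x|)^\al$ to satisfy the $A_p$ condition with $p=1+\frac{2}{N}$, so that $\frac{1}{p-1}=\frac{N}{2}$, with a constant independent of $\e$. The plan is to compare $w_\e$ pointwise with the regularized weight $\max\{|x|,\e\}^\al$ and then verify the $A_p$ condition for the latter uniformly in $\e$.

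\emph{Step 1: pointwise comparison.} By Lemma \ref{12.17.L1}, $\psi_\e(|x|)=|x|$ for $|x|\geq\e$. For $|x|\le\e$ we have $\frac{\e}{4}\le\psi_\e\le 2\e$. Hence
\begin{equation*}
\tfrac14\max\{|x|,\e\}\le \psi_\e(|x|)\le 2\max\{|x|,\e\}\quad\text{on }\R^N .
\end{equation*}
Writing $v_\e(x)=\max\{|x|,\e\}^\al$, this gives $4^{-\al}v_\e\le w_\e\le 2^\al v_\e$. Since the $A_p$ quantity in \eqref{01.18.2} is homogeneous of degree $0$ under $w\mapsto cw$ and monotone in each factor, it suffices to bound the $A_p$ constant of $v_\e$ uniformly in $\e$. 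That bound is then multiplied by at most $2^{\al}4^{\al}$.

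\emph{Step 2: scaling.} Observe that $v_\e(x)=\e^\al v_1(x/\e)$, and that the $A_p$ constant is invariant under dilations and multiplication by constants. Therefore $c(v_\e,p)=c(v_1,p)$, and it remains to show that $v_1=\max\{|x|,1\}^\al$ is an $A_p$ weight. This scaling step is what removes all $\e$-dependence.

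\emph{Step 3: $v_1\in A_p$.} The paper already uses that $|x|^\al\in A_p$ because $-N<\al<N(p-1)=2$; we would also need to check directly that $\al<2$ makes $|x|^{-\al N/2}$ locally integrable. Fix a cube $K$ with side $\ell$ and center $x_K$, and split into two cases.
\begin{itemize}
\item[(a)] If $K$ meets $B_2$ and $\ell\le 1$, then $K\subset B_{2+\sqrt N}$. On this set $1\le v_1\le C$, so the product in \eqref{01.18.2} is bounded by $C^{\,}$ directly.
\item[(b)] If $K$ meets $B_2$ and $\ell>1$, then $v_1\le C\ell^\al$ on $K$, so $\frac{1}{|K|}\int_K v_1\,\df x\le C\ell^\al$. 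For the other factor, $v_1^{-N/2}\le |x|^{-\al N/2}$. Since $K$ lies in a ball of radius $C\ell$ around the origin,
\begin{equation*}
\frac1{|K|}\int_K |x|^{-\al N/2}\,\df x\le C\ell^{-N}\ell^{N-\al N/2}=C\ell^{-\al N/2},
\end{equation*}
which is finite because $\al N/2<N$, i.e.\ $\al<2$. Raising to the power $p-1=2/N$ gives $C\ell^{-\al}$, and the product is bounded.
\item[(c)] If $K$ avoids $B_2$, then $v_1=|x|^\al$ on $K$, and the bound follows from $|x|^\al\in A_p$.
\end{itemize}

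The main obstacle is making the case split clean and the constants explicit, especially case (b), where the integrability threshold $\al<2$ is essential. Alternatively, one can bypass the case analysis entirely by writing $v_1=\max\{|x|^\al,1\}$ as the maximum of two $A_p$ weights. The constant $1$ is trivially $A_p$, and the maximum of two $A_p$ weights is again $A_p$: the first factor is bounded by the sum of the two averages, and the second factor is decreased because $\max\{a,b\}^{-N/2}\le\min\{a^{-N/2},b^{-N/2}\}$.
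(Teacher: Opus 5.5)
Your proof is correct, and it takes a different route from the paper.

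\textbf{The paper's route.} It estimates the $A_{1+\frac{2}{N}}$ product for $w_\varepsilon$ directly, splitting cubes by side length relative to $\varepsilon$ ($a\ll\varepsilon$, $a\approx\varepsilon$, $a\gg\varepsilon$). In the last regime it compares with $|x|^\alpha$ only approximately, and it leaves some cases as ``analogous''. Membership for fixed $\varepsilon$ is obtained separately, through an $A_1$ bound with an $\varepsilon$-dependent constant.

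\textbf{Your route.} You sandwich $w_\varepsilon$ between multiples of $\max\{|x|,\varepsilon\}^\alpha$, at a cost of a factor $8^\alpha$. This makes the uniformity in $\varepsilon$ structural rather than computational. It follows either from dilation invariance, which reduces everything to $\varepsilon=1$ and a scale-free case split, or from the inequality $c(\max\{w_1,w_2\},p)\le c(w_1,p)+c(w_2,p)$. In the second version you do not even need scaling: writing the comparison weight as $\max\{|x|^\alpha,\varepsilon^\alpha\}$ gives directly $c(w_\varepsilon,1+\frac{2}{N})\le 8^\alpha\bigl(c(|x|^\alpha,1+\frac{2}{N})+1\bigr)$.

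\textbf{What each buys.} Your argument gives:
\begin{itemize}
\item an explicit constant;
\item a bound valid for all cubes in $\mathbb{R}^N$, not just cubes centred in $\Omega$;
\item a transparent statement that the only input is $|x|^\alpha\in A_{1+\frac{2}{N}}$, i.e.\ $\alpha<2$.
\end{itemize}
It also makes the separate $A_1$ step unnecessary. That step is in any case only valid for cubes of bounded size, since $w_\varepsilon$ grows like $|x|^\alpha$ at infinity. The paper's hands-on computation shows more concretely how the smoothing near the origin interacts with cubes of size comparable to $\varepsilon$. However, it rests on approximate comparisons rather than a complete estimate.
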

\begin{proof}
It is evident that $w_\e(\cdot)$ is an $A_1$ weight since
\begin{equation*}
\frac{1}{|K|}\int_Kw(x)\df x\leq \frac{4m}{\e} \essinf_K w,
\end{equation*}
and $\essinf_Kw_\e\geq \frac{\e}{4}$. By the property $A_{p_1}\subset A_{p_2}$ for $1\leq p_1<p_2$, we conclude that $w_\e(\cdot)$ is an $A_{1+\frac{2}{N}}$ weight.
Next, we show that $\{c(w_\e, 1+\frac{2}{N})\colon \e\in (0,1)\}$ is uniformly bounded above, where $c(w_\e,1+\frac{2}{N})$ is the $A_{1+\frac{2}{N}}$ constant of $w_\e(\cdot)$.
Firstly, for $y=(y_1,\cdots ,y_N)\in \Omega$, consider the case $K=\prod_{i=1}^N(y_i-a,y_i+a)$ for $0<a\ll \e$. We have
\begin{equation*}
\begin{split}
\frac{1}{|K|}\int_K w_\e(x)\df x
&\leq 2 (2a)^{-N}\iint_{B_{\sqrt{N}a}}w_\e(y) \df r=2(2a)^{-N}w_\e(y)|B_{\sqrt{N}a}|=\frac{| B_1|N^\frac{N}{2}}{2^{N-1}}  w_\e(y),
\end{split}
\end{equation*}
and
\begin{equation*}
\begin{split}
\left(\frac{1}{|K|}\int_Kw_\e(x)^{-\frac{N}{2}}\df x\right)^\frac{2}{N}
&\leq  2w_\e(y)^{-1}\left(\frac{| B_1|N^\frac{N}{2}}{2^N}\right)^\frac{2}{N}=w_\e(y)^{-1}\frac{| B_1|^\frac{2}{N}N}{2}.
\end{split}
\end{equation*}
These imply that
\begin{equation*}
\begin{split}
\left(\frac{1}{|K|}\int_K w_\e(x)\df x\right)\left(\frac{1}{|K|}\int_Kw_\e(x)^{-\frac{N}{2}}\df x\right)^\frac{2}{N}\leq \frac{1}{2^N}| B_1|^{1+\frac{2}{N}}N^{1+\frac{N}{2}}.
\end{split}
\end{equation*}
Secondly, for $y=(y_1,\cdots, y_N)\in\Omega$, consider the case $K_y=\prod_{i=1}^N(y_i-a,y_i+a)$ for $a\approx \e$. We have
\begin{equation*}
\begin{split}
&\left(\frac{1}{|K_y|}\int_{K_y} w_\e(x)\df x\right)\left(\frac{1}{|K_y|}\int_{K_y}w_\e(x)^{-\frac{N}{2}}\df x\right)^\frac{2}{N}\\
&\leq (3\e)^\al \frac{1}{4\e^2}\left(\int_{B_{\sqrt{N}\e}}|x|^{-\frac{N\al}{2}}\df x\right)^\frac{2}{N}=(3\e)^\al \frac{1}{4\e^2} |\partial B_1|^\frac{2}{N}\left(\int_0^{\sqrt{N}\e} r^{N-1-\frac{N\al}{2}}\df r\right)^\frac{2}{N}\\
&\leq \frac{3^\al}{2^{2+\frac{2}{N}}}|\partial B_1|^\frac{2}{N} (2-\al)^{-\frac{2}{N}}N^{2-\al-\frac{2}{N}}
\end{split}
\end{equation*}
for $|y|\leq 2\e$. The other cases are analogous to the case of $|x|^\al$.
Thirdly, for $y=(y_1,\cdots, y_N)\in\Omega$, consider the case  $K_y=\prod_{i=1}^N(y_i-a,y_i+a)$ for $\e\ll a$. Then,
\begin{equation*}
\begin{split}
&\left(\frac{1}{|K_y|}\int_{K_y} w_\e(x)\df x\right)\left(\frac{1}{|K_y|}\int_{K_y}w_\e(x)^{-\frac{N}{2}}\df x\right)^\frac{2}{N}\\
&\approx \left(\frac{1}{|K_y|}\int_{K_y} w(x)\df x\right)\left(\frac{1}{|K_y|}\int_{K_y}w(x)^{-\frac{N}{2}}\df x\right)^\frac{2}{N}\leq c\left(w,1+\frac{2}{N}\right).
\end{split}
\end{equation*}
Overall, we obtain that $c(w_\e, 1+\frac{2}{N})\leq C$, where the constant $C>0$ depends only on $\al, N$ and $\Omega$. Moreover, $C>0$ is independent of $\e>0$.
\end{proof}
\begin{lemma}\label{08.16.L2}
Let $z\in H_0^1(\Omega;w_\e)$. Then,
\begin{equation}\label{08.20.1}
(N+\al-2)\|w_\e^{\frac{1}{2}-\frac{1}{\al}}z\|_{L^2(\Omega)}\leq 2\|z\|_{H_0^1(\Omega;w_\e)}.
\end{equation}
Moreover,
\begin{equation}\label{08.20.2}
\|z\|_{L^2(\Omega; w_\e)}\leq \frac{2m}{N+\al-2}\||\nabla z|\|_{L^2(\Omega;w_\e)},
\end{equation}
and
\begin{equation}\label{08.31.10}
\|z\|_{L^2(\Omega)}\leq \frac{2m^{1-\frac{\al}{2}}}{N+\al-2}\||\nabla z|\|_{L^2(\Omega;w_\e)}.
\end{equation}
\end{lemma}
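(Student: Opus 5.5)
We will prove \eqref{08.20.1} as a Hardy-type inequality by the standard vector field (divergence) argument. Since $H_0^1(\Omega;w_\e)$ is the closure of $\mathcal{D}(\Omega)$ and, by \eqref{12.16.1}, $w_\e$ is bounded above and below by positive constants (depending on $\e$), it suffices to take $z\in C_0^\infty(\Omega)$. Note that $w_\e^{\frac12-\frac1\al}=\psi_\e^{\frac{\al}{2}-1}$, so the left side of \eqref{08.20.1} is $\|\psi_\e^{\frac\al2-1}z\|_{L^2(\Omega)}$. The natural choice is the field $F=\psi_\e^{\al-1}\nabla\psi_\e$. For $|x|\ge\e$ this is $|x|^{\al-2}x$, which is exactly the field used for Lemma \ref{08.15.L1}.

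Integrating by parts (no boundary terms, since $z$ has compact support) gives
\begin{equation*}
-\int_\Omega z^2\Div F\,\df x=2\int_\Omega z\,\nabla z\cdot F\,\df x\le 2\Big(\int_\Omega \psi_\e^{\al-2}|\nabla\psi_\e|^2z^2\df x\Big)^{\frac12}\Big(\int_\Omega|\nabla z|^2w_\e\df x\Big)^{\frac12}.
\end{equation*}
Here $\Div F=(\al-1)\psi_\e^{\al-2}|\nabla\psi_\e|^2+\psi_\e^{\al-1}\Delta\psi_\e$. This sign convention is not yet the useful one, so we use $-F$, i.e. we work with the identity $\int z^2\Div F=-2\int z\nabla z\cdot F$ and aim for the lower bound
\begin{equation*}
\Div F\ge (N+\al-2)\,\psi_\e^{\al-2}\cdot c(x),
\end{equation*}
where $c(x)\ge |\nabla\psi_\e|^2$ and the resulting constants combine to give exactly $(N+\al-2)/2$.

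Outside $B_\e$ we have $|\nabla\psi_\e|=1$ and $\Delta\psi_\e=(N-1)/|x|$, so $\Div F=(N+\al-2)|x|^{\al-2}$ exactly. Inside $B_\e$ we use \eqref{12.18.1}. Writing $s=|x|/\e\in[0,1]$, we have $\nabla\psi_\e=\frac{x}{2\e}(3-s^2)$, $\psi_\e=\frac{\e}{8}(3+6s^2-s^4)$, and $\Delta\psi_\e=\frac{1}{2\e}\big(N(3-s^2)-2s^2\big)$. The claim to verify is
\begin{equation*}
(\al-1)|\nabla\psi_\e|^2+\psi_\e\Delta\psi_\e\ge (N+\al-2)|\nabla\psi_\e|^2
\end{equation*}
on $B_\e$, i.e. $\psi_\e\Delta\psi_\e\ge (N-1)|\nabla\psi_\e|^2$. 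In the variable $s$ this reads
\begin{equation*}
\tfrac1{16}(3+6s^2-s^4)\big(N(3-s^2)-2s^2\big)\ge \tfrac{N-1}{4}s^2(3-s^2)^2,
\end{equation*}
a one-variable polynomial inequality to check on $[0,1]$ for all $N\ge2$. It should follow because $\psi_\e\ge$ the tangent-type quantity $|x|\,|\nabla\psi_\e|$, a consequence of convexity of $\psi_\e$ in $r$ with $\psi_\e(0)>0$. Also $\Delta\psi_\e\ge(N-1)|\nabla\psi_\e|/|x|$ holds, since $\psi_\e'(r)/r$ is decreasing and $\psi_\e''\ge0$. Granting this, we get $\Div F\ge (N+\al-2)\psi_\e^{\al-2}|\nabla\psi_\e|^2$. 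Combined with Cauchy--Schwarz this yields $(N+\al-2)\|\psi_\e^{\frac\al2-1}|\nabla\psi_\e|z\|_{L^2}\le 2\|z\|_{H_0^1(\Omega;w_\e)}$. Since $|\nabla\psi_\e|<1$ inside $B_\e$, this is slightly weaker than \eqref{08.20.1}. Closing this gap is the main obstacle. We would first try the alternative field $F=\psi_\e^{\al-2}x$, for which the analogous pointwise inequality $\Div F\ge(N+\al-2)\psi_\e^{\al-2}$ reduces to $N\psi_\e+(\al-2)|x|\psi_\e'\ge (N+\al-2)\psi_\e$. This is $(2-\al)(\psi_\e-r\psi_\e')\ge0$, again true by convexity with $\psi_\e(0)>0$. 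The Cauchy--Schwarz step then needs $|x|^2\psi_\e^{\al-2}\le\psi_\e^{\al}$, which holds because $\psi_\e\ge|x|$. This gives \eqref{08.20.1} cleanly, so this is the field we will actually use.

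Finally, \eqref{08.20.2} and \eqref{08.31.10} follow from \eqref{08.20.1}, exactly as \eqref{12.09.2} and \eqref{08.15.10} follow from Lemma \ref{08.15.L1}. We use $\psi_\e\le m$ on $\Omega$, since $\psi_\e\le 2\e<1$ on $B_\e$ and $\psi_\e=|x|$ elsewhere. Then $\psi_\e^{\al}=\psi_\e^{\al-2}\psi_\e^2\le m^2\psi_\e^{\al-2}$, and $1\le m^{2-\al}\psi_\e^{\al-2}$.
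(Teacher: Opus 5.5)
Your final argument, with the field $F=\psi_\e^{\al-2}x=w_\e^{1-\frac{2}{\al}}x$, is the paper's proof. It uses the same identity $\Div F=(N+\al-2)\psi_\e^{\al-2}+(2-\al)\psi_\e^{\al-3}(\psi_\e-x\cdot\nabla\psi_\e)$, the same Cauchy--Schwarz step using $|x|\le\psi_\e$, and the same bound $\psi_\e\le m$ for the last two inequalities.

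The one thing to repair is your reason for $\psi_\e-r\psi_\e'\ge 0$. Convexity together with $\psi_\e(0)>0$ does not imply it. The tangent line at $r$ lies below the graph, and evaluating it at $0$ gives only the upper bound $\psi_\e(r)-r\psi_\e'(r)\le\psi_\e(0)$. For example, $1+r^2$ is convex and positive at $0$, yet $(1+r^2)-r\cdot 2r=1-r^2<0$ for $r>1$.

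A correct argument is the following. Set $h=\psi_\e-r\psi_\e'$. Then $h'=-r\psi_\e''\le 0$, and $h\equiv 0$ for $r\ge\e$ because $\psi_\e$ matches $|x|$ to first order there. Hence $h\ge h(\e)=0$ on $[0,\e]$. Alternatively, compute directly as the paper does: $\psi_\e-x\cdot\nabla\psi_\e=\frac{3}{8\e^3}(\e^2-|x|^2)^2$.

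The same faulty justification appears in your abandoned first attempt, where you claim $\psi_\e\ge |x|\,|\nabla\psi_\e|$. It does not affect the final proof once the step above is fixed.
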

\begin{proof}
Since $w_\e\in C^{4}(\overline{\Omega})$, we have
\begin{equation*}
\begin{split}
2\int_{\Omega} w_\e^{1-\frac{2}{\al}}z(x\cdot\nabla z)\df x
&=\int_\Omega w_\e^{1-\frac{2}{\al}}x\cdot \nabla z^2\df x=\int_\Omega \Div\left(w_\e^{1-\frac{2}{\al}}z^2x\right)\df x-\int_\Omega z^2\Div\left(w_\e^{1-\frac{2}{\al}}x\right)\df x\\
&=-(N+\al-2)\int_\Omega z^2w_\e^{1-\frac{2}{\al}} \df x-(2-\al)\int_{B_\e}z^2\psi_\e^{\al-3}[\psi_\e-x\cdot\nabla\psi_\e]\df x.
\end{split}
\end{equation*}
Then,
\begin{equation*}
\begin{split}
(N+\al-2)\int_\Omega z^2w_\e^{1-\frac{2}{\al}}\df x
&\leq -2\int_\Omega w_\e^{1-\frac{2}{\al}}z(x\cdot \nabla z)\df x=2\int_\Omega \left(w_\e^{\frac{1}{2}-\frac{1}{\al}}z\right)\left(w_\e^{\frac{1}{2}-\frac{1}{\al}}x\cdot\nabla z\right)\df x\\
&\leq 2\left(\int_\Omega w_\e^{1-\frac{2}{\al}}z^2\df x\right)^\frac{1}{2}\left(\int_\Omega w_\e^{1-\frac{2}{\al}}|x|^2|\nabla z|^2\df x\right)^\frac{1}{2}\\
&\leq 2\left(\int_\Omega w_\e^{1-\frac{2}{\al}}z^2\df x\right)^\frac{1}{2}\left(\int_\Omega |\nabla z|^2w_\e\df x\right)^\frac{1}{2}
\end{split}
\end{equation*}
by $\al\in (0,2)$, and $\psi_\e-x\cdot\nabla\psi_\e=\frac{3}{8\e^3}(\e^2-|x|^2)^2\geq 0$ on $B_\e$, and Lemma \ref{12.17.L1}. This shows that
\begin{equation*}
(N+\al-2)\|w_\e^{\frac{1}{2}-\frac{1}{\al}}z\|_{L^2(\Omega)}\leq 2\||\nabla z|\|_{L^2(\Omega; w_\e)}.
\end{equation*}
Hence \eqref{08.20.1} holds.
Finally, by \eqref{08.20.1} and  $\e^\al\leq w_\e\leq m^\al$ in $\Omega$, we get \eqref{08.20.2} and \eqref{08.31.10}.
\end{proof}

\textbf{Notations}. For $k\in\mathbb{N}$, we denote $w_k=w_\e$ with $\e=\frac{1}{k}$.
Fix $k\in\mathbb{N}$. Consider the following equation
\begin{equation}\label{08.16.1}
\begin{cases}
\partial_t\widehat{\varphi}_k-\Div(w_k\nabla\widehat{\varphi}_k)=f_k, &\text{in } Q,\\
\widehat{\varphi}_k=0, &\text{on }\partial Q, \\
\widehat{\varphi}_k(0)=\varphi_k, & \text{in }\Omega,
\end{cases}
\end{equation}
where $\varphi_k\in L^2(\Omega)$, $f_k$ is a given function satisfying $f_kw_k^{-1}\in L^2(\Omega; w_k)$.
Denote
\begin{equation*}
W_k=\left\{\varphi\in L^2(0,T; H_{0}^1(\Omega;w_k))\colon \partial_t\varphi\in L^2(0,T; H^{-1}(\Omega;w_k))\right\}.
\end{equation*}
It is clear that $W_k\subset C([0,T]; L^2(\Omega))$ by Lemma \ref{08.15.L1}. Moreover, $W_k\hookrightarrow L^2(Q)$ is compact.
\begin{definition}\label{08.16.D1}
We call $\varphi_k\in W_k$ a solution of \eqref{08.16.1}, if
\begin{equation*}
-\iint_Q\varphi_k\partial_t\psi\df x\df t+\iint_Q\left(\nabla \varphi_k\cdot \nabla \psi\right)w_k\df x\df t=\iint_Q f_k\psi\df x\df t+\int_\Omega \varphi_k(x)\psi(x,0)\df x
\end{equation*}
for any $\psi\in W_k$ with $\psi(T)=0$.
\end{definition}
\begin{lemma}\label{08.22.L1}
Let $\varphi_k\in W_k\ (k\in\mathbb{N})$ be the solution of \eqref{08.16.1} with $\varphi_0\in L^2(\Omega)$ and $f_kw_k^{-1}\in L^2(\Omega; w_k)$. Then,
\begin{equation*}
\max_{t\in [0,T]}\|\varphi_k(t)\|_{L^2(\Omega)}+\|\varphi_k\|_{L^2(0,T; H_0^1(\Omega;w_k)}\leq C\left(\|\varphi_0\|_{L^2(\Omega)}+\|f_kw_k^{-1}\|_{L^2(\Omega; w_k)}\right),
\end{equation*}
where the constant $C>0$ depends only on $\al,  N$ and $\Omega$. Furthermore, if $f_k\in L^2(Q)$, then
\begin{equation*}
\max_{t\in [0,T]}\|\varphi_k(t)\|_{L^2(\Omega)}+\|\varphi_k\|_{L^2(0,T; H_0^1(\Omega;w_k)}\leq C\left(\|\varphi_0\|_{L^2(\Omega)}+\|f_k\|_{L^2(\Omega)}\right).
\end{equation*}
Moreover, if $f_k=0$, then, for all $0\leq t\leq \tau\leq T$,
\begin{equation*}
\|\varphi_k(\tau)\|_{L^2(\Omega)}\leq \|\varphi_k(t)\|_{L^2(\Omega)}.
\end{equation*}
\end{lemma}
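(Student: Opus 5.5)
The plan is to mirror the proof of Lemma \ref{12.09.L1}, replacing the weight $w$ by $w_k$ and the Hardy--Poincar\'e inequality of Lemma \ref{08.15.L1} by its $\e$-uniform analogue, Lemma \ref{08.16.L2}. Since $w_k\ge (1/(4k))^\al>0$ and $w_k\in C^{2,1}$, equation \eqref{08.16.1} is uniformly parabolic for fixed $k$. Hence $\varphi_k$ is regular enough that we may test the equation with $\varphi_k$ itself on $\Omega\times(0,\tau)$. If one prefers to avoid this regularity argument, one can apply the Lions--Magenes lemma in $W_k\subset C([0,T];L^2(\Omega))$, which gives $\frac{\df}{\df t}\|\varphi_k\|_{L^2}^2=2\langle\pt_t\varphi_k,\varphi_k\rangle$. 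Either way we obtain the energy identity
\begin{equation*}
\frac12\int_\Omega|\varphi_k(x,\tau)|^2\df x+\iint_{\Omega\times(0,\tau)}|\nabla\varphi_k|^2w_k\df x\df t=\frac12\int_\Omega|\varphi_0|^2\df x+\iint_{\Omega\times(0,\tau)}f_k\varphi_k\df x\df t
\end{equation*}
for every $\tau\in(0,T]$.

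For the first estimate, we write $f_k\varphi_k=(f_kw_k^{-1})\varphi_k w_k$ and apply Young's inequality:
\begin{equation*}
\iint f_k\varphi_k\le \frac{1}{2\de}\iint (f_kw_k^{-1})^2w_k+\frac{\de}{2}\iint\varphi_k^2w_k .
\end{equation*}
By \eqref{08.20.2}, the last integral is at most $\bigl(\frac{2m}{N+\al-2}\bigr)^2\iint|\nabla\varphi_k|^2w_k$. We choose $\de=\frac12\bigl(\frac{N+\al-2}{2m}\bigr)^2$ so that this term is absorbed into the left-hand side. Taking the supremum over $\tau$ then gives the bound, with $C$ depending only on $\al$, $N$ and $m$ (that is, on $\Omega$). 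The key point is that this constant is independent of $k$, because the constant in Lemma \ref{08.16.L2} does not depend on $\e$. I read the norm $\|f_kw_k^{-1}\|_{L^2(\Omega;w_k)}$ in the statement as the space-time norm over $Q$.

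For the second estimate, when $f_k\in L^2(Q)$, we instead use $\iint f_k\varphi_k\le\frac{1}{2\de}\|f_k\|_{L^2(Q)}^2+\frac{\de}{2}\iint\varphi_k^2$. By \eqref{08.31.10}, $\iint\varphi_k^2\le\bigl(\frac{2m^{1-\al/2}}{N+\al-2}\bigr)^2\iint|\nabla\varphi_k|^2w_k$, and we absorb this term in the same way. For the third claim, with $f_k=0$, the identity applied between times $t$ and $\tau$ shows that $\tau\mapsto\|\varphi_k(\tau)\|_{L^2}^2$ is nonincreasing, since its derivative is $-2\int|\nabla\varphi_k|^2w_k\le0$. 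This gives the monotonicity, exactly as in Corollary \ref{12.11.C1}.

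I expect the main obstacle to be justifying the energy identity, because the solution is defined only weakly in Definition \ref{08.16.D1}. The test functions there must vanish at $T$, so $\varphi_k$ itself is not admissible. I would handle this with the standard time-regularization (Steklov averages) argument for $W_k$. This is legitimate because $H_0^1(\Omega;w_k)=H_0^1(\Omega)$ with equivalent norms for fixed $k$, so classical Lions theory applies. Only the constants in the final estimate need to be uniform in $k$, and these come solely from Lemma \ref{08.16.L2}.
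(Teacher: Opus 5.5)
Your proposal is correct and follows the paper's proof. Both test \eqref{08.16.1} with $\varphi_k$, apply Young's inequality, absorb the lower-order term via \eqref{08.20.2} (resp.\ \eqref{08.31.10}) from Lemma~\ref{08.16.L2}, whose constants are independent of $k$, and read off the monotonicity from the energy identity when $f_k=0$. Your choice $\de=\frac12\bigl(\frac{N+\al-2}{2m}\bigr)^2$ is in fact the correct one (the paper's $\de=\frac{N+\al-2}{4m}$ absorbs the term only when $m<N+\al-2$, because \eqref{08.20.2} has to be squared), and your justification of the energy identity via Lions--Magenes for fixed $k$ supplies a step the paper takes for granted.
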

\begin{proof}
Let $\varphi_k\in W_k$ be the test function, multiplying $\varphi_k$ on both sides of \eqref{08.16.1}, and integrating on $(0,t)$, we obtain
\begin{equation*}
\begin{split}
&\frac{1}{2}\int_\Omega \varphi_k^2(t)\df x+\iint_{\Omega\times (0,t)} |\nabla\varphi_k|^2w_k\df x\df t\\
&=\iint_{\Omega\times (0,t)} f_k\varphi_k\df x+\frac{1}{2}\int_\Omega \varphi_0^2\df x\\
&\leq \delta \iint_{\Omega\times (0,t)}  \varphi_k^2w_k\df x\df t+\frac{1}{2\delta}\iint_{\Omega\times (0,1)}f_k^2w_k^{-1}\df x\df t+\frac{1}{2}\int_\Omega \varphi_0^2\df x.
\end{split}
\end{equation*}
By \eqref{08.20.2} in Lemma \ref{08.16.L2}, taking $\delta=\frac{N+\al-2}{4m}$, we have
\begin{equation*}
\int_\Omega \varphi_k^2(t)\df x+\iint_{\Omega\times (0,t)}|\nabla\varphi_k|^2w_k\df x\df t\leq \frac{2m}{N+\al-2}\iint_{\Omega\times (0,t)} f_k^2w_k^{-1}\df x\df t+\int_\Omega \varphi_0^2\df x.
\end{equation*}
Taking $C=\max\{\frac{2m}{N+\al-2}, 1\}$, we prove the first part of Lemma \ref{08.22.L1}. The other parts are similar to Lemma \ref{12.09.L1} and Corollary \ref{12.11.C1}.
\end{proof}
\begin{theorem}\label{08.16.T1}
Let $\widehat{\varphi}_0\in W$ be the solution of \eqref{12.14.1} with initial data  $\varphi_0\in L^2(\Omega)$ and $fw^{-1}\in L^2(\Omega;w)$. Let $\widehat{\varphi}_k\ (k\in\mathbb{N})$ be the solution of \eqref{08.16.1} with initial data $\varphi_k=\varphi_0$ and $f_k=f$. Then,
\begin{equation}\label{08.22.1}
\begin{split}
\widehat{\varphi}_k
&\rightharpoonup \widehat{\varphi}_0   \text{ weakly in } L^2(0,T; H_0^1(\Omega;w)), \\
\widehat{\varphi}_k
&\rightharpoonup \widehat{\varphi}_0  \text{ weakly in } L^2(Q).
\end{split}
\end{equation}
Moreover,
\begin{equation}\label{12.09.5}
\widehat{\varphi}_k(T)\rightharpoonup \widehat{\varphi}_0(T)   \text{ weakly in } L^2(\Omega).
\end{equation}
\end{theorem}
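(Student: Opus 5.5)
The plan is a standard compactness argument: obtain bounds uniform in $k$, extract weakly convergent subsequences, pass to the limit in the weak formulation, and then use uniqueness to conclude for the whole sequence.

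\textbf{Uniform bounds.} Since $\psi_\e(x)\geq |x|$ by Lemma \ref{12.17.L1}, we have $w_k\geq w$ on $\Omega$. Hence $f^2w_k^{-1}\leq f^2w^{-1}$, and
\[
\|fw_k^{-1}\|_{L^2(\Omega;w_k)}\leq \|fw^{-1}\|_{L^2(\Omega;w)}.
\]
Lemma \ref{08.22.L1}, whose constant depends only on $\al,N,\Omega$ and not on $k$, then gives
\[
\max_{t\in[0,T]}\|\widehat\varphi_k(t)\|_{L^2(\Omega)}+\Big(\iint_Q|\nabla\widehat\varphi_k|^2w_k\,\df x\,\df t\Big)^{1/2}\leq C\Big(\|\varphi_0\|_{L^2(\Omega)}+\|fw^{-1}\|_{L^2(\Omega;w)}\Big).
\]
Using $w\leq w_k$ again, $\{\widehat\varphi_k\}$ is bounded in $L^2(0,T;H_0^1(\Omega;w))$. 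One point needs care: $H_0^1(\Omega;w_k)\subset H_0^1(\Omega;w)$. This holds because the $w_k$-norm dominates the $w$-norm, so any $C_0^\infty$ approximating sequence in the $w_k$-norm also converges in the $w$-norm. The family is also bounded in $L^\infty(0,T;L^2(\Omega))$, and $\{\widehat\varphi_k(T)\}$ is bounded in $L^2(\Omega)$. We can therefore extract a subsequence with
\[
\widehat\varphi_k\rightharpoonup\varphi^* \text{ in } L^2(0,T;H_0^1(\Omega;w)) \text{ and in } L^2(Q),\qquad \widehat\varphi_k(T)\rightharpoonup \eta \text{ in } L^2(\Omega).
\]
Moreover, $\sqrt{w_k}\nabla\widehat\varphi_k$ is bounded in $L^2(Q)^N$, so after a further extraction it converges weakly to some $G$.

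\textbf{Passing to the limit.} Take a smooth test function $\psi\in C^1([0,T];C_0^\infty(\Omega))$, not required to vanish at $T$. The weak formulation with the terminal term reads
\[
\int_\Omega\widehat\varphi_k(T)\psi(T)\,\df x-\iint_Q\widehat\varphi_k\,\pt_t\psi+\iint_Q w_k\nabla\widehat\varphi_k\cdot\nabla\psi=\iint_Q f\psi+\int_\Omega\varphi_0\psi(0).
\]
The only delicate term is the diffusion term. I split it as
\[
\iint_Q w_k\nabla\widehat\varphi_k\cdot\nabla\psi=\iint_Q w\nabla\widehat\varphi_k\cdot\nabla\psi+\iint_Q (w_k-w)\nabla\widehat\varphi_k\cdot\nabla\psi .
\]
The first piece converges to $\iint_Q w\nabla\varphi^*\cdot\nabla\psi$, since $\phi\mapsto\iint_Q w\nabla\phi\cdot\nabla\psi$ is a continuous linear functional on $L^2(0,T;H_0^1(\Omega;w))$. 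For the second piece, $w_k-w$ is supported in $B_{1/k}$ and satisfies $w_k-w\leq (2/k)^\al$ there. By Cauchy--Schwarz with weight $w_k$,
\[
\Big|\iint_Q (w_k-w)\nabla\widehat\varphi_k\cdot\nabla\psi\Big|\leq \Big(\iint_Q|\nabla\widehat\varphi_k|^2w_k\Big)^{1/2}\Big(\iint_{B_{1/k}\times(0,T)}\frac{(w_k-w)^2}{w_k}|\nabla\psi|^2\Big)^{1/2}.
\]
The first factor is bounded by the uniform estimate. In the second, $(w_k-w)^2/w_k\leq w_k\leq (2/k)^\al$, so it is at most $C\,k^{-\al/2}|B_{1/k}|^{1/2}\|\nabla\psi\|_\infty\to0$. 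Letting $k\to\infty$ along the subsequence, $\varphi^*$ satisfies the weak formulation of \eqref{12.14.1} with terminal value $\eta$.

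\textbf{Identification and conclusion.} Choosing $\psi$ with $\psi(T)=0$ shows $\varphi^*$ satisfies Definition \ref{08.16.D1}, after the usual density argument from smooth test functions to $W$. Testing with general $\psi(T)$ then identifies $\eta=\varphi^*(T)$. This step uses $\varphi^*\in W\subset C([0,T];L^2(\Omega))$: from the limit equation, $\pt_t\varphi^*\in L^2(0,T;H^{-1}(\Omega;w))$, because $w\nabla\varphi^*\in L^2(Q;w^{-1})$ and $f\in L^2(Q;w^{-1})$. By the uniqueness in Lemma \ref{12.09.L1}, $\varphi^*=\widehat\varphi_0$. Since every subsequence has a further subsequence with the same limit, the whole sequence converges, which gives \eqref{08.22.1} and \eqref{12.09.5}.

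I expect the main obstacle to be this last identification step: justifying the density of smooth test functions in $W$, and establishing the time regularity of the limit so that the terminal value is meaningful.
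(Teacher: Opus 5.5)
Your proposal is correct and follows the paper's route. You get uniform bounds from Lemma \ref{08.22.L1} via $w\le w_k$, extract weak limits, and pass to the limit in the diffusion term by isolating the contribution of $B_{1/k}$; you do this in a single Cauchy--Schwarz step using $(w_k-w)^2/w_k\le w_k$, where the paper estimates the $w_k$- and $w$-integrals over $B_{1/k}$ separately. You then conclude by uniqueness and by testing against $\psi(T)$ to identify the terminal value, as the paper does. Your extra care fills in details the paper leaves implicit: you extract a weak limit of $\widehat\varphi_k(T)$ from its $L^2$ bound before identifying it, you check that $\partial_t\varphi^*\in L^2(0,T;H^{-1}(\Omega;w))$ so that $\varphi^*\in W$, and you invoke the subsequence principle to get convergence of the whole sequence.
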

\begin{proof} The proof is split into four  steps.

{\it Step 1.}  Note that
\begin{equation}\label{12.11.1}
\begin{split}
\iint_Q (fw_k^{-1})^2w_k\df x\df t=\iint_{Q}(fw^{-1})^2(ww_k^{-1})w\df x\df t\leq \iint_Q (fw^{-1})^2w\df x\df t
\end{split}
\end{equation}
and
\begin{equation*}
\iint_{\Omega\times (0,t)}|\nabla\widehat{\varphi}_k|^2w\df x\df t\leq \iint_{\Omega\times (0,t)}|\nabla \widehat{\varphi}_k|^2w_k\df x\df t
\end{equation*}
since $w\leq w_k$ in $\Omega$. By Lemma \ref{08.22.L1}, we get
\begin{equation*}
\begin{split}
\max_{t\in [0,T]}\|\widehat{\varphi}_k(t)\|_{L^2(\Omega)}+\|\widehat{\varphi}_k\|_{L^2(0,T; H_0^1(\Omega;w))}\leq C\left(\|\varphi_0\|_{L^2(\Omega)}+\|fw^{-1}\|_{L^2(\Omega; w)}\right),
\end{split}
\end{equation*}
where the constant $C>0$ depends only on $\al, N$ and $\Omega$. Then,  there exists a subsequence of $\{\widehat{\varphi}k\}{k\in\mathbb{N}}$, still denoted by itself, and $\widetilde{\varphi}_0\in L^2(0,T; H_0^1(\Omega;w))$, such that
\begin{equation}\label{08.16.4}
\begin{split}
\widehat{\varphi}_k
&\rightharpoonup \widetilde{\varphi}_0 \quad \text{weakly in } L^2(0,T; H_0^1(\Omega;w)),
\end{split}
\end{equation}
and by Lemma \ref{08.15.L1}, we have
\begin{equation}\label{08.23.1}
\widehat{\varphi}_k
\rightharpoonup \widetilde{\varphi}_0   \text{ weakly in } L^2(Q).
\end{equation}
Moreover, we have
\begin{equation*}
\widehat{\varphi}_k
\rightharpoonup \widetilde{\varphi}_0   \text{ weakly in } L^2(Q; w).
\end{equation*}
{\it Step 2}. Now, we prove that $\widetilde{\varphi}_0$ is a weak solution of \eqref{12.14.1}.
Suppose $\psi\in W, \psi(T)=0$. By a density argument, we can suppose $\psi\in C^\infty(\overline{Q})$ and $\psi(\cdot, t)$ is compactly supported in $\Omega$ for any $t$. By the definitions of weak solution (see Definitions \ref{08.16.D1}) and  \eqref{08.16.4} and \eqref{08.23.1}, we only need to show
\begin{equation}\label{08.16.5}
\iint_Q(\nabla\widehat{\varphi}_k\cdot \nabla\psi)w_k\df x\df t\to \iint_Q(\nabla\widetilde{\varphi}_0\cdot\nabla\psi)w\df x\df t \quad \text{as } k\to \infty.
\end{equation}
First, for arbitrary $\ga>0$, by \eqref{08.16.4},  there exists $k_0\in\mathbb{N}$, such that for all $k\geq k_0$, we have
\begin{equation}\label{08.16.6}
\left|\iint_Q(\nabla\widehat{\varphi}_k\cdot\nabla\psi)w\df x\df t-\iint_Q(\nabla\widetilde{\varphi}_0\cdot\nabla\psi)w\df x\df t\right|<\frac{1}{2}\ga.
\end{equation}
Second, note that $w=w_k$ on $\Omega\setminus B_{\frac{1}{k}}$, i.e.,
\begin{equation}\label{08.16.7}
\iint_{(\Omega\setminus B_{\frac{1}{k}})\times (0,T)}(\nabla\widehat{\varphi}_k\cdot\nabla\psi)w_k\df x\df t=\iint_{(\Omega\setminus B_{\frac{1}{k}})\times (0,T)}(\nabla \widehat{\varphi}_k\cdot\nabla\psi)w\df x\df t.
\end{equation}
Third, by the same argument as \eqref{12.11.1} in Step 1, we have
\begin{equation}\label{08.16.9}
\begin{split}
&\left|\iint_{B_{\frac{1}{k}}\times (0,T)}(\nabla\widehat{\varphi}_k\cdot\nabla\psi)w_k\df x\df t\right|\\
&\leq \left(\iint_{B_{\frac{1}{k}}\times (0,T)}|\nabla\widehat{\varphi}_k|^2w_k\df x\df t\right)^\frac{1}{2}\left(\iint_{B_{\frac{1}{k}}\times (0,T)}|\nabla\psi|^2w_k\df x\df t\right)^\frac{1}{2}\\
&\leq C\sqrt{T} \left(\sup_{Q}|\nabla\psi|\right)\left(\|\varphi_0\|_{L^2(\Omega)}+\|fw^{-1}\|_{L^2(Q; w)}\right)\left(w_k(B_{\frac{1}{k}})\right)^\frac{1}{2}\\
&\leq C_\psi\sqrt{T} \left(\|\varphi_0\|_{L^2(\Omega)}+\|fw^{-1}\|_{L^2(\Omega; w)}\right)\frac{1}{k^\frac{N+\al}{2}}
\end{split}
\end{equation}
according to
\begin{equation*}
w_k(B_{\frac{1}{k}})=\int_{B_{\frac{1}{k}}}w_k\df x\leq 2^\al\int_{B_{\frac{1}{k}}}\frac{1}{k^\al} \df x= C\frac{1}{k^{N+\al}}
\end{equation*}
from Lemma \ref{12.17.L1},
where the constant $C_\psi$ is a constant that depends only on $\al, N, \Omega$ and $\psi$.
Fourth,
\begin{equation}\label{08.16.10}
\begin{split}
&\left|\iint_{B_{\frac{1}{k}}\times (0,T)}(\nabla\widehat{\varphi}_k\cdot\nabla\psi)w\df x\df t\right|\\
&\leq \left(\iint_{B_{\frac{1}{k}}\times (0,T)}|\nabla\widehat{\varphi}_k|^2w\df x\df t\right)^\frac{1}{2}\left(\iint_{B_{\frac{1}{k}}\times (0,T)}|\nabla\psi|^2w\df x\df t\right)^\frac{1}{2}\\
&\leq C\sqrt{T} \left(\sup_{Q}|\nabla\psi|\right)\left(\|\varphi_0\|_{L^2(\Omega)}+\|f\|_{L^2(Q; w)}\right)\left(w(B_{\frac{1}{k}})\right)^\frac{1}{2}\\
&\leq C_\psi\sqrt{T} \left(\|\varphi_0\|_{L^2(\Omega)}+\|f\|_{L^2(Q; w)}\right)\frac{1}{k^\frac{N+\al}{2}}
\end{split}
\end{equation}
according to
\begin{equation*}
w(B_{\frac{1}{k}})=\int_{B_{\frac{1}{k}}}w\df x\leq  \int_{B_{\frac{1}{k}}}|x|^\al \df x=C \frac{1}{k^{N+\al}},
\end{equation*}
where the constant $C_\psi$ depends only on $\al, N,\Omega$ and $\psi$.
Finally, since $w=w_k$ on $\Omega\setminus B_{\frac{1}{k}}$, by  \eqref{08.16.6}, \eqref{08.16.7}, \eqref{08.16.9} and \eqref{08.16.10}, we have
\begin{equation*}
\begin{split}
&\left|\iint_Q(\nabla\widehat{\varphi}_k\cdot \nabla\psi)w_k\df x\df t- \iint_Q(\nabla \widetilde{\varphi}_0\cdot\nabla\psi)w\df x\df t\right|\\
&\leq  \left|\iint_Q(\nabla\widehat{\varphi}_k\cdot \nabla\psi)w_k\df x\df t-\iint_Q(\nabla\widehat{\varphi}_k\cdot \nabla\psi)w\df x\df t\right|\\
&\hspace{4.5mm}+\left|\iint_Q(\nabla\widehat{\varphi}_k\cdot\nabla\psi)w\df x\df t-\iint_Q(\nabla\widetilde{\varphi}_0\cdot\nabla\psi)w\df x\df t\right|\\
&\leq \left|\iint_{(\Omega\setminus B_{\frac{1}{k}})\times (0,T)}(\nabla\widehat{\varphi}_k\cdot\nabla\psi)w_k\df x\df t-\iint_{(\Omega\setminus B_{\frac{1}{k}})\times (0,T)}(\nabla\widehat{\varphi}_k\cdot\nabla\psi)w\df x\df t\right|\\
&\hspace{4.5mm}+\left|\iint_{B_{\frac{1}{k}}\times (0,T)}(\nabla\widehat{\varphi}_k\cdot\nabla\psi)w_k\df x\df t\right|+\left|\iint_{B_{\frac{1}{k}}\times (0,T)}(\nabla\widehat{\varphi}_k\cdot\nabla\psi)w\df x\df t\right|+\frac{1}{2}\ga\\
&\leq C_\psi\sqrt{T}\left(\|\varphi_0\|_{L^2(\Omega)}+\|f\|_{L^2(Q; w)}\right)\frac{1}{k^\frac{N+\al}{2}}+\frac{1}{2}\ga,
\end{split}
\end{equation*}
and hence \eqref{08.16.5} holds. From this and Step 1, we have proved
\begin{equation*}
-\iint_Q\widetilde{\varphi}_0\partial_t\psi\df x\df t+\iint_Q (\nabla\widetilde{\varphi}_0\cdot \nabla\psi) w\df x\df t=\iint_Q f\psi\df x\df t+\int_\Omega \varphi_0(x)\psi(x,0)\df x.
\end{equation*}
This shows that $\widetilde{\varphi}_0$ is a solution of \eqref{12.14.1}.

{\it Step 3}. Since $\widetilde{\varphi}_0$ and $\widehat{\varphi}_0$ are solutions of \eqref{12.14.1} with initial data $\varphi_0\in L^2(\Omega)$ and $fw^{-1}\in L^2(\Omega; w)$, by the uniqueness of the solution of \eqref{12.14.1}, we have $\widetilde{\varphi}_0=\widehat{\varphi}_0$.

{\it Step 4}. Finally, let $\psi\in C^\infty(\overline{Q})$ and $\psi(\cdot, t)$ compactly supported in $\Omega$ for any $t\in [0,T]$. Multiplying $\psi$ on both sides of \eqref{08.16.1} with $\widehat{\varphi}_k(0)=\varphi_0$ and $f_k=f$, then
\begin{equation*}
\begin{split}
\int_\Omega \widehat{\varphi}_k(T)\psi(T)\df x-\iint_Q \widehat{\varphi}_k\partial_t\psi\df x\df t+\iint_Q w_k\nabla \widehat{\varphi}_k\cdot \nabla\psi\df x\df t=\iint_Q f\psi\df x\df t+\int_\Omega \widehat{\varphi}_k(0)\psi(0)\df x.
\end{split}
\end{equation*}
Multiplying $\psi$  on both sides of \eqref{12.14.1}, we obtain
\begin{equation*}
\int_\Omega \widehat{\varphi}_0(T)\psi(T)\df x-\iint_Q \widehat{\varphi}_0\partial_t\psi\df x\df t+\iint_Q w\nabla \widehat{\varphi}_0\cdot \nabla\psi\df x\df t=\iint_Q f\psi\df x\df t+\int_\Omega \widehat{\varphi}_0(0)\psi(0)\df x.
\end{equation*}
By \eqref{08.16.5} and $\widehat{\varphi}_0=\widetilde{\varphi}_0$, $\widehat{\varphi}_k(0)=\widehat{\varphi}_0(0)=\varphi_0$ for all $k\in\mathbb{N}$, and $\widehat{\varphi}_k\rightharpoonup \widehat{\varphi}_0$ weakly in $L^2(Q)$, we obtain
\begin{equation*}
\int_\Omega \widehat{\varphi}_k(T)\psi(T)\df x\to \int_\Omega \widehat{\varphi}_0(T)\psi(T)\df x.
\end{equation*}
This shows that $\widehat{\varphi}_k(T)\rightharpoonup \widehat{\varphi}_0(T)$ in $L^2(\Omega)$ since $\psi(T)$ is arbitrary.
\end{proof}
\begin{corollary}\label{08.23.C1}
Under the assumptions in Theorem  \ref{08.16.T1}. If we further assume the initial data $\varphi_0\in H^1(\Omega)$ and $f_k=0$ for all $k\in\mathbb{N}$, then
\begin{equation*}
\widehat{\varphi}_k\to \widehat{\varphi}_0 \quad \text{strongly in } L^2(Q),
\end{equation*}
and
\begin{equation*}
\widehat{\varphi}_k(T)\to \widehat{\varphi}_0(T) \quad \text{strongly in }L^2(\Omega).
\end{equation*}
\end{corollary}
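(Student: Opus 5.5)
The plan is to upgrade the weak convergence of Theorem \ref{08.16.T1} to strong convergence. The extra ingredients are the additional regularity $\varphi_0\in H^1(\Omega)$ and $f=0$: they give a uniform-in-$k$ bound on $\partial_t\widehat{\varphi}_k$, and then an Aubin--Lions type compactness argument applies. Since every subsequence will have a further subsequence converging to the same limit $\widehat{\varphi}_0$ (uniqueness, Step 3 of Theorem \ref{08.16.T1}), the whole sequence converges.

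\emph{Step 1 (time-derivative bound).} I will test \eqref{08.16.1} (with $f_k=0$) against $\partial_t\widehat{\varphi}_k$. This is justified for the uniformly parabolic problem with $C^{2,1}$ coefficients by Galerkin approximation, since $w_k\geq (4k)^{-\alpha}>0$. It gives
\begin{equation*}
\iint_Q|\partial_t\widehat{\varphi}_k|^2\,\mathrm{d}x\,\mathrm{d}t+\frac12\int_\Omega w_k|\nabla\widehat{\varphi}_k(T)|^2\,\mathrm{d}x=\frac12\int_\Omega w_k|\nabla\varphi_0|^2\,\mathrm{d}x\le \frac12 \sup_\Omega w_k\,\|\nabla\varphi_0\|_{L^2(\Omega)}^2 .
\end{equation*}
Since $w_k\le \max\{m,1\}^\alpha$ uniformly in $k$ by Lemma \ref{12.17.L1}, the right-hand side is bounded independently of $k$. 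I expect the delicate point to be that $\varphi_0\in H^1(\Omega)$ need not vanish on $\partial\Omega$, which is needed to use it as initial data in $H_0^1(\Omega;w_k)$. If this causes trouble, I will first prove the result for $\varphi_0\in H_0^1(\Omega)$. The general case then follows by density in $L^2(\Omega)$ together with the uniform $L^2$ stability of Lemma \ref{08.22.L1} (with $f_k=0$) and of Lemma \ref{12.09.L1}, via a standard $\varepsilon/3$ argument.

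\emph{Step 2 (compactness in $L^2(Q)$).} From Lemma \ref{08.22.L1} and $w\le w_k$, $\widehat{\varphi}_k$ is bounded in $L^2(0,T;H_0^1(\Omega;w))$. By Step 1, $\partial_t\widehat{\varphi}_k$ is bounded in $L^2(Q)$. The embedding $H_0^1(\Omega;w)\hookrightarrow L^2(\Omega)$ is compact by Lemma \ref{08.15.L4}, so the Aubin--Lions lemma (triple $H_0^1(\Omega;w)\Subset L^2(\Omega)\subset L^2(\Omega)$) makes $\{\widehat{\varphi}_k\}$ relatively compact in $L^2(Q)$. Any strong limit point must equal the weak limit $\widehat{\varphi}_0$ from \eqref{08.22.1}, which proves the first claim.

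\emph{Step 3 (convergence at $t=T$).} I will use the Aubin--Simon variant: boundedness in $L^\infty(0,T;H_0^1(\Omega;w))$ together with $\partial_t$ bounded in $L^2(0,T;L^2(\Omega))$ gives relative compactness in $C([0,T];L^2(\Omega))$. The $L^\infty$-in-time bound also comes from Step 1, because the same identity integrated up to any $t\in[0,T]$ gives $\int_\Omega w|\nabla\widehat{\varphi}_k(t)|^2\le\int_\Omega w_k|\nabla\widehat{\varphi}_k(t)|^2\le C$. Hence $\widehat{\varphi}_k(T)$ has strongly convergent subsequences in $L^2(\Omega)$. By \eqref{12.09.5} every such limit is $\widehat{\varphi}_0(T)$, so the whole sequence converges strongly. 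Alternatively, one can bound $\widehat{\varphi}_k(T)$ directly in $H_0^1(\Omega;w)$ and apply Lemma \ref{08.15.L4} to it.

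The main obstacle is Step 1: rigorously justifying the energy identity for $\partial_t\widehat{\varphi}_k$ uniformly in $k$, including the boundary compatibility of $\varphi_0$. Everything after that is standard compactness.
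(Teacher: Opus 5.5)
Your proposal is correct and follows essentially the paper's route. You test with $\partial_t\widehat{\varphi}_k$ to get uniform bounds on $\partial_t\widehat{\varphi}_k$ in $L^2(Q)$ and on $\int_\Omega w|\nabla\widehat{\varphi}_k(t)|^2\,\mathrm{d}x$, then use compactness: Aubin--Lions in $L^2(Q)$, and for $t=T$ the $H_0^1(\Omega;w)$ bound plus Lemma \ref{08.15.L4}, which is exactly the paper's Step 2. The limits are identified through Theorem \ref{08.16.T1}.

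Two of your points are sound refinements of things the paper passes over. The paper's proof simply writes $\varphi_0\in H_0^1(\Omega)$, while you reduce to that case by $L^2$-density and contraction. You also give the subsequence argument needed for convergence of the whole sequence. Together these in fact yield the conclusion for every $\varphi_0\in L^2(\Omega)$.
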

\begin{proof} The proof is split into two steps.

{\it Step 1}. Since $\varphi_0\in H_0^1(\Omega)$, the solution $\widehat{\varphi}_k$ of \eqref{08.16.1} satisfies (note that $H_{w_k,0}^1=H_0^1(\Omega)$ for any $k\in\mathbb{N}$)
\begin{equation}\label{12.25.1}
\widehat{\varphi}k\in L^2(0,T; H^2(\Omega)\cap H{0}^1(\Omega))\cap H^1(0,T; H_0^1(\Omega)) \quad \text{and} \quad \partial_t\widehat{\varphi}_k\in L^2(0,T; L^2(\Omega)).
\end{equation}
Multiplying $\partial_t\widehat{\varphi}_k$ on both sides of \eqref{08.16.1}, integrating on $\Omega\times(0,t)$, we get
\begin{equation*}
\begin{split}
\iint_{\Omega\times(0,t)}(\partial_t\widehat{\varphi}_k)^2\df x\df t+\frac{1}{2}\iint_{\Omega\times(0,t)} \partial_t\left(|\nabla \widehat{\varphi}_k|^2w_k\right)\df x\df t=\iint_{\Omega\times(0,t)} f\partial_t\widehat{\varphi}_k\df x\df t,
\end{split}
\end{equation*}
which implies that
\begin{equation*}
\begin{split}
&\iint_{\Omega\times(0,t)} |\partial_t\widehat{\varphi}_k|^2\df x\df t+\frac{1}{2}\int_\Omega |\nabla\widehat{\varphi}_k(t)|^2w_k\df x\\
&=\iint_Q f\partial_t\widehat{\varphi}_k\df x\df t+\frac{1}{2}\int_\Omega |\nabla \widehat{\varphi}_k(0)|^2w_k\df x\\
&\leq \frac{1}{2}\iint_Q |\partial_t\widehat{\varphi}_k|^2\df x\df t+\frac{m^\al}{2}\left(\iint_Q (fw^{-1})^2w\df x\df t+\int_\Omega |\nabla \varphi_0|^2\df x\right)
\end{split}
\end{equation*}
for all $t\in (0,T]$.
Hence
\begin{equation}\label{12.09.3}
\iint_Q|\partial_t\widehat{\varphi}_k|^2\df x\df t+\int_\Omega |\nabla\widehat{\varphi}_k(t)|^2w_k\df x\leq m^\al \left(\iint_Q (fw^{-1})^2w\df x\df t+\int_\Omega |\nabla \varphi_0|^2\df x\right).
\end{equation}
Therefore, by \eqref{08.15.10}, we have
\begin{equation}\label{08.23.2}
\begin{split}
\|\partial_t\widehat{\varphi}_k\|_{L^2(0,T; H_w^{-1}(\Omega))}
&=\sup_{\|\psi\|_{L^2(0,T; H_0^1(\Omega;w))}\leq 1}\langle \partial_t\widehat{\varphi}_k, \psi\rangle_{L^2(0,T; H_w^{-1}(\Omega)), L^2(0,T; H_0^1(\Omega;w))}\\
&=\sup_{\|\psi\|_{L^2(0,T; H_0^1(\Omega;w))}\leq 1}\iint_Q (\partial_t\widehat{\varphi}_k) \psi\df x\df t\\
&\leq  C\sup_{\|\psi\|_{L^2(0,T; H_0^1(\Omega;w))}\leq 1}\|\partial_t\widehat{\varphi}_k\|_{L^2(Q)} \|\psi\|_{L^2(Q)}\leq C\|\partial_t\widehat{\varphi}_k\|_{L^2(Q)},
\end{split}
\end{equation}
where the constants $C>0$ depends only on $\al, N$ and $\Omega$, and we used \eqref{08.15.10} in the last inequality. From which and Theorem \ref{08.16.T1}, we obtain that there exists a subsequence of $\{\widehat{\varphi}_k\}$, still denoted by itself,  such that
\begin{equation*}
\partial_t\widehat{\varphi}_k\rightharpoonup \partial_t\widehat{\varphi}_0   \text{ weakly in } L^2(0,T; H_w^{-1}(\Omega)).
\end{equation*}
From this,  \eqref{08.23.2},  and the embedding $W\hookrightarrow L^2(Q)$ is compact, we have
\begin{equation*}
\widehat{\varphi}_k\to \widehat{\varphi}_0   \text{ strongly in }L^2(Q).
\end{equation*}
{\it Step 2}. Next, by \eqref{12.09.3} and $w\leq w_k$ for all $k\in\mathbb{N}$, we have
\begin{equation*}
\int_\Omega|\nabla \widehat{\varphi}_k(T)|^2w\df x\leq \int_\Omega|\nabla \widehat{\varphi}_k(T)|^2w_k\df x\leq m^\al \left(\iint_Q (fw^{-1})^2w\df x\df t+\int_\Omega|\nabla \varphi_0|^2\df x\right).
\end{equation*}
This show that there exists a subsequence of $\{\widehat{\varphi}_k(T)\}$, still denoted by itself, and $\xi\in H_0^1(\Omega;w)$, such that $\widehat{\varphi}_k(T)\rightharpoonup \xi$ weakly in $H_0^1(\Omega;w)$, moreover, we have $\widehat{\varphi}_k(T)\to \xi$ strongly in $L^2(\Omega)$ by Lemma \ref{08.15.L4}. This, together with \eqref{12.09.5},  we get $\xi=\widehat{\varphi}_0(T)$.
This completes the proof of the corollary.
\end{proof}

\section{Carleman estimate}\label{S4}

 In this section, we aim to derive the Carleman estimate stated in Theorem \ref{06.10.T2} for the backward approximate equations \eqref{06.08.1}. As noted in Remark \ref{05.16.R1}, it remains unclear how to directly obtain the Carleman estimate from the backward equation \eqref{06.08.2} associated with equation \eqref{12.14.1}.
\begin{lemma}\label{12.26.L1}
Let $\Omega \subset \mathbb{R}^N$ be a $C^3$ bounded domain containing the origin, i.e., $0 \in \Omega$. Let $R > 0$ be such that $B_{4R} \subset \Omega$. Define an open subset $\widehat{\omega} = B_{2R}(x_0) \subset B_{3R}(x_0) \subset \Omega$ satisfying $B_{3R} \cap B_{3R}(x_0) = \emptyset$. Then, there exists a function $\eta \in C^{2,1}(\overline{\Omega})$ such that $\eta > 0$ in $\Omega$, $\eta = 0$ on $\partial \Omega$,
\begin{equation*}
\eta(x) = \psi_\varepsilon^{2-\alpha}(x) \quad \text{for } x \in B_{2R},
\end{equation*}
and
\begin{equation*}
|\nabla \eta| \geq C > 0 \quad \text{on } \overline{\Omega \setminus (B_{R} \cup \widehat{\omega})},
\end{equation*}
where $C > 0$ is an absolute constant.
\end{lemma}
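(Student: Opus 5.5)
We will construct $\eta$ by gluing two ingredients: a radial piece near the origin and a standard Fursikov--Imanuvilov weight away from it. First, by the classical construction (see \cite{Fursikov}), there is $\eta_0\in C^{3}(\overline{\Omega})$ with $\eta_0>0$ in $\Omega$, $\eta_0=0$ on $\partial\Omega$, and $|\nabla\eta_0|>0$ on $\overline{\Omega}$ except at finitely many nondegenerate critical points. The critical points can be moved into any prescribed open set by composing with a diffeomorphism of $\overline\Omega$ that is the identity near $\partial\Omega$. We move all of them into $\widehat{\omega}=B_{2R}(x_0)$. The $C^3$ regularity of $\partial\Omega$ gives $\eta_0\in C^3$; in particular $\eta_0\in C^{2,1}$ with $|\nabla\eta_0|\geq c_0>0$ on $\overline{\Omega\setminus\widehat{\omega}}$.

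Second, near the origin we use $\psi_\varepsilon^{2-\alpha}$. By Lemma \ref{12.17.L1}, $\psi_\varepsilon\in C^{2,1}$. Since $\psi_\varepsilon\geq\varepsilon/4>0$, the power $\psi_\varepsilon^{2-\alpha}$ is also $C^{2,1}$. On $B_{4R}\setminus B_\varepsilon$ it equals $|x|^{2-\alpha}$, so its gradient has modulus $(2-\alpha)|x|^{1-\alpha}$. This modulus is bounded below on the annulus $R\le|x|\le 3R$ by a constant depending only on $R$ and $\alpha$. Fix a cutoff $\theta\in C^\infty_0(B_{3R})$ with $\theta=1$ on $B_{2R}$, and set
\begin{equation*}
\eta=\theta\,\psi_\varepsilon^{2-\alpha}+(1-\theta)\,\lambda\,\eta_0 ,
\end{equation*}
where $\lambda>0$ is still to be chosen. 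This gives $\eta=\psi_\varepsilon^{2-\alpha}$ on $B_{2R}$. Since $B_{3R}\cap B_{3R}(x_0)=\emptyset$, we also have $\eta=\lambda\eta_0$ on $\widehat\omega$ and near $\partial\Omega$. Hence $\eta\in C^{2,1}(\overline\Omega)$, $\eta=0$ on $\partial\Omega$, and $\eta>0$ in $\Omega$, because both summands are positive inside.

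It remains to check the gradient bound on $\overline{\Omega\setminus(B_R\cup\widehat\omega)}$. Off $B_{3R}$ we have $\nabla\eta=\lambda\nabla\eta_0$, which has modulus at least $\lambda c_0$. On $B_{2R}\setminus B_R$ the gradient is that of $|x|^{2-\alpha}$, which is already bounded below. The delicate region is the transition annulus $2R\le|x|\le3R$, where
\begin{equation*}
\nabla\eta=\theta\nabla|x|^{2-\alpha}+(1-\theta)\lambda\nabla\eta_0+(\nabla\theta)\big(|x|^{2-\alpha}-\lambda\eta_0\big),
\end{equation*}
and cancellation could occur.

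This transition region is the main obstacle. We plan to overcome it by first arranging that $\eta_0$ is itself radial and increasing toward the origin on $B_{4R}$. Concretely, we choose $\eta_0=\lambda^{-1}g(|x|^{2-\alpha})$ there, with $g$ smooth, $g'>0$ and $g(s)=s$ for small $s$. This is possible because the construction of $\eta_0$ only constrains it near $\partial\Omega$ and its critical set, and we glue it to a radial profile in $B_{4R}$ via a standard monotone interpolation. Both summands then share the radial direction $x/|x|$ with positive radial derivative, so their convex combination, including the $\nabla\theta$ term, keeps a strictly positive radial derivative. The $\nabla\theta$ term is radial if $\theta$ is radial, and it is harmless once $\lambda\eta_0=|x|^{2-\alpha}$ on the annulus, which we arrange by taking $g(s)=s$ there. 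This yields $|\nabla\eta|\ge C>0$ uniformly, with $C$ independent of $\varepsilon$ because $\psi_\varepsilon=|x|$ on $|x|\ge R>\varepsilon$.
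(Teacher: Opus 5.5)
Your construction has a genuine gap. It sits exactly in the transition annulus you flagged, and it is topological: no choice of $\lambda$ or $\theta$ fixes it.

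\textbf{The cancellation is forced.} Since $\theta\equiv 1$ on $B_{2R}$, on the sphere $|x|=2R$ you have $\nabla\eta=(2-\alpha)|x|^{-\alpha}x$. So the map $x\mapsto \nabla\eta/|\nabla\eta|$ on that sphere has degree $1$. Near $|x|=3R$ you have $\nabla\eta=\lambda\nabla\eta_0$. You first moved all critical points of $\eta_0$ into $\widehat\omega$, which is disjoint from $\overline{B_{3R}}$ (indeed $|x_0|\ge 6R$). Hence $\nabla\eta_0$ is nonvanishing on the whole ball $\overline{B_{3R}}$, and the corresponding map on $|x|=3R$ has degree $0$. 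If $\nabla\eta$ had no zero on $\{2R\le|x|\le 3R\}$, the spheres $|x|=r$, $2R\le r\le 3R$, would give a homotopy between these two maps. So $\nabla\eta$ must vanish somewhere in this annulus, which lies in $\overline{\Omega\setminus(B_R\cup\widehat\omega)}$, for every $\lambda$ and every cutoff.

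\textbf{Your remedy relocates the obstruction rather than removing it.} An $\eta_0\in C^3(\overline\Omega)$ with $|\nabla\eta_0|\ge c_0$ on $\overline{\Omega\setminus\widehat\omega}$ cannot equal $\lambda^{-1}g(|x|^{2-\alpha})$ with $g'>0$ on $B_{4R}$. Such a profile increases away from the origin (not toward it), has a minimum at $0$, and is not $C^2$ there. If you relax the requirement near $0$, then $\eta_0$ is essentially the function the lemma asks for. The \emph{standard monotone interpolation} between the radial profile and a weight whose critical points already sit in $\widehat\omega$ then meets the same degree argument in any interpolation annulus avoiding $\widehat\omega$. So the key step is circular.

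\textbf{The missing idea is the order of operations.} This is what the paper's proof relies on through the cited Proposition 3.31 on transferring critical points.
First, build any $\tilde\eta\in C^{2,1}(\overline\Omega)$ with $\tilde\eta=\psi_\varepsilon^{2-\alpha}$ on $B_{3R}$, $\tilde\eta>0$ in $\Omega$, and $\tilde\eta=0$, $\partial_\nu\tilde\eta<0$ on $\partial\Omega$. Your cutoff formula is fine for this step, with critical points in the gluing region allowed.
Second, perturb $\tilde\eta$ away from $\overline{B_{3R}}$ and $\partial\Omega$ so that its critical points in $\Omega\setminus\overline{B_{3R}}$ are finitely many and nondegenerate.
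Third, and only then, move these points into $\widehat\omega$ by a diffeomorphism $\Phi$ of $\overline\Omega$ that is the identity on $\overline{B_{3R}}$ and near $\partial\Omega$. This works because $\Omega\setminus\overline{B_{3R}}$ is connected ($N\ge 2$) and contains $\widehat\omega$.
Then $\eta=\tilde\eta\circ\Phi^{-1}$ equals $\psi_\varepsilon^{2-\alpha}$ on $B_{2R}$, and by compactness its gradient is bounded below on $\overline{\Omega\setminus(B_R\cup\widehat\omega)}$. The bound is independent of $\varepsilon$, because $\tilde\eta$ and $\Phi$ can be chosen independently of $\varepsilon$ outside $B_\varepsilon$.
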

\begin{proof}
Define $\eta = \psi_\varepsilon^{2-\alpha}$ for $x \in B_{2R}$. We construct $\eta \in C^{2,1}(\overline{\Omega})$ such that $\eta > 0$ in $\Omega$ and $\eta = 0$ on $\partial \Omega$, leveraging the conditions $B_{3R} \cap B_{3R}(x_0) = \emptyset$ and $B_{4R} \subset \Omega$. By \cite[Proposition 3.31, Chapter 3.6]{Rousseau}, we can ensure that $|\nabla \eta| \geq C > 0$ on $B_{2R} \setminus B_{\frac{1}{2}R}$, which facilitates the transfer of critical points from $\overline{\Omega \setminus B_R}$ to $\widehat{\omega}$.
\end{proof}
\begin{remark}
We observe that the origin $0 \in \Omega$ is a critical point of $\eta$ in Lemma \ref{12.26.L1}.
\end{remark}
We now proceed to establish the Carleman estimate for the backward equation \eqref{12.14.1}:
\begin{equation}\label{06.08.2}
\begin{cases}
\partial_t u + \Div(w \nabla u) = g, & \text{in } Q, \\
u = 0, & \text{on } \partial Q, \\
u(T) = u_T, & \text{in } \Omega,
\end{cases}
\end{equation}
where $g \in L^2(\Omega)$ and $u_T \in L^2(\Omega)$.
In accordance with Theorem \ref{08.16.T1} and Corollary \ref{08.23.C1}, our initial step involves obtaining the Carleman estimate for the approximate uniformly parabolic equations corresponding to \eqref{06.08.2}. To achieve this, we consider the backward equation \eqref{06.08.1} associated with equation \eqref{08.16.1}.

\subsection{Carleman estimate of the approximate backward uniformly parabolic equations}\label{S4.1}
 Let $0 < \varepsilon \ll R$. Consider the following approximate system of equations:
\begin{equation}\label{06.08.1}
\begin{cases}
\partial_t u + \Div(w_\varepsilon \nabla u) = g, &\text{in } Q, \\
u = 0, &\text{on } \partial Q, \\
u(T) = u_T, &\text{in } \Omega,
\end{cases}
\end{equation}
where $g \in L^2(\Omega)$ and $u_T \in L^2(\Omega)$. We observe that the equations in \eqref{06.08.1} are uniformly parabolic, ensuring that the second-order partial derivatives are well-defined everywhere.
Next, we define the following functions:
\begin{equation*}
\Theta(t) = \frac{1}{[t(T - t)]^4}, \quad \xi(x,t) = \Theta(t) e^{\lambda(8|\eta|_\infty + \eta(x))}, \quad \sigma(x,t) = \Theta(t) e^{10\lambda|\eta|_\infty} - \xi(x,t),
\end{equation*}
where $|\eta|\infty = \esssup{x \in \Omega} |\eta(x)|$. Consequently, we have 
\begin{equation}\label{06.08.3}
\nabla \xi = \lambda \xi \nabla \eta, \quad \nabla \sigma = -\nabla \xi = -\lambda \xi \nabla \eta, \quad \Div(w_\varepsilon \nabla \sigma) = -\lambda^2 \xi w_\varepsilon |\nabla \eta|^2 - \lambda \xi \Div(w_\varepsilon \nabla \eta),
\end{equation}
and
\begin{equation}\label{06.08.4}
\nabla (\partial_t \sigma) = \partial_t \nabla \sigma = -\lambda (\partial_t \xi) \nabla \eta.
\end{equation}
We introduce the transformation:
\begin{equation}\label{06.08.5}
v = e^{-s\sigma} u \quad \Leftrightarrow \quad u = e^{s\sigma} v.
\end{equation}
This transformation implies the following properties:
\begin{equation}\label{gbz2}
\begin{array}{l}
 i)\;\; v = \nabla v = 0  \hbox{ at } t = 0 \hbox{  and } t = T, \\
ii)\;\;   v = 0 \hbox{ on } \partial Q.
\end{array}
\end{equation}

By direct computation, we obtain:
\begin{equation*}
\begin{split}
e^{-s\sigma} g
&= v_t + s w_\varepsilon \nabla v \cdot \nabla \sigma + s \Div(v w_\varepsilon \nabla \sigma) \\
&\quad + \Div(w_\varepsilon \nabla v) + s (\partial_t \sigma) v + s^2 v w_\varepsilon \nabla \sigma \cdot \nabla \sigma,
\end{split}
\end{equation*}
which can be rewritten as:
\begin{equation*}
e^{-s\sigma} g = P_1 v + P_2 v,
\end{equation*}
where
\begin{equation}\label{gbz3}
\left\{\begin{array}{l}
\disp P_1 v = v_t + s w_\varepsilon \nabla v \cdot \nabla \sigma + s \Div(v w_\varepsilon \nabla \sigma), \crr\disp
P_2 v = \Div(w_\varepsilon \nabla v) + s (\partial_t \sigma) v + s^2 v w_\varepsilon \nabla \sigma \cdot \nabla \sigma.
\end{array}\right.
\end{equation}

\subsubsection{Computations}\label{S4.1.1}

 Let $P_i$, for $i = 1, 2$, be defined as in \dref{gbz3}. We can express
\begin{equation}\label{gbz4}
(P_1v,P_2v)_{L^2(Q)} = \sum_{i = 1}^4 I_i(Q),
\end{equation}
where
\begin{equation}\label{gbz5}
\left\{\begin{aligned}
I_1(Q) &= \left(v_t, \Div(w_\e\nabla v)+s\si_tv + s^2vw_\e\nabla \si\cdot \nabla \si\right)_{L^2(Q)},\\
I_2(Q) &= s^2\left(\Div(vw_\e \nabla \si)+w_\e\nabla v\cdot \nabla \si,\si_t v\right)_{L^2(Q)},\\
I_3(Q) &= s^3\left( \Div(vw_\e\nabla\si)+w_\e\nabla v\cdot \nabla \si, vw_\e\nabla\si\cdot \nabla\si\right)_{L^2(Q)}, \\
I_4(Q) &= s\left(\Div(v w_\e\nabla\si)+w_\e \nabla v\cdot \nabla \si, \Div(w_\e \nabla v)\right)_{L^2(Q)}.
\end{aligned}\right.
\end{equation}
Next, we compute $I_i(Q)$ for $i = 1, 2, 3, 4$.

\textbf{Computation of $I_1(Q)$.} By \dref{gbz2} and the fact that $v_t = 0$ on $\partial Q$, we have
\begin{align*}
I_1(Q) &= \iint_Q v_t\left(\Div(\psi_\e^\al \nabla v)+s\si_t v + s^2\la^2\xi^2 v\psi_\e^\al |\nabla\eta|^2\right) dx dt\\
&= -\frac{s}{2}\iint_Q v^2\si_{tt} dx dt - s^2\la^2\iint_Q \xi\xi_t v^2\psi_\e^\al |\nabla\eta|^2 dx dt.
\end{align*}

\textbf{Computation of $I_2(Q)$.} We obtain
\begin{align*}
I_2(Q) &= -s^2\iint_Q v\psi_\e^\al \nabla \si\cdot \nabla(\si_t v) dx dt + s^2\iint_Q \si_t v  \psi_\e^\al \nabla\si\cdot \nabla v dx dt\\
&= -s^2\la^2\iint_Q\xi \xi_tv^2\psi_\e^\al |\nabla\eta|^2 dx dt.
\end{align*}

\textbf{Computation of $I_3(Q)$.} We have
\begin{align*}
I_3(Q) &= s^3\iint_Q \left(\Div(v\psi_\e^\al \nabla\si)+\psi_\e^\al\nabla v\cdot \nabla\si\right)v\psi_\e^\al \nabla \si\cdot \nabla\si dx dt\\
&= -s^3\iint_Q v^2\psi_\e^\al \nabla\si\cdot \nabla (\psi_\e^\al \nabla\si\cdot \nabla\si) dx dt\\
&= 2s^3\la^4\iint_Q \xi^3v^2(\psi_\e^{\al}|\nabla\eta|^2)^2 dx dt + s^3\la^3\iint_Q \xi^3v^2\psi_\e^\al \nabla\eta\cdot \nabla (\psi_\e^\al|\nabla\eta|^2) dx dt.
\end{align*}
\textbf{Computation of $I_4(Q)$.} By \dref{gbz2}, we get
\begin{align*}
I_4(Q) &= s\iint_Q \left(\psi_\e^\al \nabla v\cdot \nabla \si+\Div(v\psi_\e^\al \nabla\si)\right) \Div(\psi_\e^\al \nabla v) dx dt\\
&= -2s\la\iint_Q \xi \psi_\e^\al (\nabla v\cdot \nabla\eta) \Div(\psi_\e^\al \nabla v) dx dt - s\la\iint_Q v\Div(\xi\psi_\e^\al \nabla\eta)\Div(\psi_\e^\al \nabla v) dx dt\\
&= s\la\iint_{\partial Q}\xi  \psi_\e^{2\al} \left|\frac{\partial v}{\partial \nu}\right|^2\left|\frac{\partial \eta}{\partial \nu}\right| dS dt + 2s\la^2\iint_Q \xi (\psi_\e^\al \nabla v\cdot \nabla\eta)^2 dx dt\\
&\quad + 2\al s\la\iint_Q \xi \psi_\e^{2\al - 1}(\nabla v\cdot \nabla\eta)(\nabla v\cdot \nabla\psi_\e) dx dt + 2s\la\iint_Q \xi \psi_\e^{2\al}(D^2\eta\nabla v)\cdot \nabla v dx dt\\
&\quad - 2\al s\la\iint_Q \xi \psi_\e^{2\al - 1}|\nabla v|^2(\nabla\eta\cdot \nabla\psi_\e) dx dt - s\la\iint_Q\xi \psi_\e^{2\al}|\nabla v|^2\De\eta dx dt\\
&\quad + s\la\iint_Q \xi \psi_\e^\al |\nabla v|^2\Div(\psi_\e^\al \nabla\eta) dx dt\\
&\quad + s\la^3\iint_Q \xi\psi_\e^{2\al} v(\nabla v\cdot \nabla\eta)|\nabla\eta|^2 dx dt + s\la^2\iint_Q\xi \psi_\e^\al v\nabla v\cdot \nabla [\psi_\e^\al |\nabla\eta|^2] dx dt\\
&\quad + s\la^2\iint_Q \xi \psi_\e^\al v(\nabla v\cdot \nabla \eta)\Div(\psi_\e^\al \nabla\eta) dx dt + s\la\iint_Q\xi v\psi_\e^\al \nabla v\cdot \nabla \Div(\psi_\e^\al \nabla\eta) dx dt,
\end{align*}
where we used the following identities:
\begin{align*}
&-2s\la\iint_Q\xi \psi_\e^\al (\nabla v\cdot \nabla\eta)\Div(\psi_\e^\al \nabla v) dx dt\\
&= 2s\la\iint_{\partial Q}\xi  \psi_\e^{2\al} \left|\frac{\partial v}{\partial \nu}\right|^2\left|\frac{\partial \eta}{\partial \nu}\right| dS dt + 2s\la^2\iint_Q \xi (\psi_\e^\al \nabla v\cdot \nabla\eta)^2 dx dt\\
&\quad + 2\al s\la\iint_Q \xi \psi_\e^{2\al - 1}(\nabla v\cdot \nabla\eta)(\nabla v\cdot \nabla\psi_\e) dx dt + 2s\la\iint_Q \xi \psi_\e^{2\al}\nabla v\cdot \nabla ( \nabla v\cdot \nabla\eta) dx dt\\
&= s\la\iint_{\partial Q}\xi  \psi_\e^{2\al} \left|\frac{\partial v}{\partial \nu}\right|^2\left|\frac{\partial \eta}{\partial \nu}\right| dS dt + 2s\la^2\iint_Q \xi (\psi_\e^\al \nabla v\cdot \nabla\eta)^2 dx dt\\
&\quad + 2\al s\la\iint_Q \xi \psi_\e^{2\al - 1}(\nabla v\cdot \nabla\eta)(\nabla v\cdot \nabla\psi_\e) dx dt + 2s\la\iint_Q \xi \psi_\e^{2\al}(D^2\eta\nabla v)\cdot \nabla v dx dt\\
&\quad - s\la^2\iint_Q\xi \psi_\e^{2\al}|\nabla v|^2|\nabla\eta|^2 dx dt - 2\al s\la\iint_Q \xi \psi_\e^{2\al - 1}|\nabla v|^2(\nabla\eta\cdot \nabla\psi_\e) dx dt\\
&\quad - s\la\iint_Q\xi \psi_\e^{2\al}|\nabla v|^2\De\eta dx dt,
\end{align*}
since $\nabla v\cdot \nabla (\nabla v\cdot \nabla\eta)=(D^2\eta\nabla v)\cdot \nabla v+\frac{1}{2}\nabla \eta\cdot \nabla|\nabla v|^2$, and
\begin{align*}
& s\la\iint_Q \xi \psi_\e^{2\al}\nabla\eta\cdot \nabla|\nabla v|^2 dx dt\\
&= -s\la\iint_{\partial Q}\xi \psi_\e^{2\al}\left|\frac{\partial v}{\partial \nu}\right|^2\left|\frac{\partial\eta}{\partial \nu}\right|dS dt - s\la^2\iint_Q \xi \psi_\e^{2\al}|\nabla v|^2|\nabla\eta|^2 dx dt\\
&\quad - 2\al s\la\iint_Q \xi \psi_\e^{2\al - 1}|\nabla v|^2 (\nabla \eta\cdot \nabla\psi_\e) dx dt - s\la\iint_Q\xi \psi_\e^{2\al}|\nabla v|^2 \De\eta dx dt,
\end{align*}
and
\begin{align*}
&-s\la\iint_Q v\Div(\xi\psi_\e^\al \nabla\eta)\Div(\psi_\e^\al \nabla v) dx dt\\
&= -s\la^2\iint_Q \xi  v\psi_\e^\al |\nabla\eta|^2\Div(\psi_\e^\al \nabla v) dx dt - s\la\iint_Q \xi v\Div(\psi_\e^\al \nabla\eta)\Div(\psi_\e^\al \nabla v) dx dt\\
&= s\la^2\iint_Q \psi_\e^\al \nabla v\cdot\nabla \left[\xi v\psi_\e^\al |\nabla\eta|^2\right] dx dt + s\la\iint_Q \psi_\e^\al \nabla v\cdot \nabla \left[\xi v\Div(\psi_\e^\al \nabla\eta)\right] dx dt\\
&= s\la^2\iint_Q \xi  \psi_\e^{2\al} |\nabla v|^2|\nabla\eta|^2 dx dt + s\la\iint_Q \xi \psi_\e^\al |\nabla v|^2\Div(\psi_\e^\al \nabla\eta) dx dt\\
&\quad + s\la^3\iint_Q \xi\psi_\e^{2\al} v(\nabla v\cdot \nabla\eta)|\nabla\eta|^2 dx dt + s\la^2\iint_Q\xi \psi_\e^\al v\nabla v\cdot \nabla [\psi_\e^\al |\nabla\eta|^2] dx dt\\
&\quad + s\la^2\iint_Q \xi \psi_\e^\al v(\nabla v\cdot \nabla \eta)\Div(\psi_\e^\al \nabla\eta) dx dt + s\la\iint_Q\xi v\psi_\e^\al \nabla v\cdot \nabla \Div(\psi_\e^\al \nabla\eta) dx dt,
\end{align*}
which follows from condition ii) of \dref{gbz2}.

\subsubsection{Estimations}\label{S4.1.2}

Let $P_i$, for $i = 1, 2$, be defined as in \dref{gbz3}.  From equations (1)-(4) in Section \ref{S4.1.1}, we know that
$$(P_1v,P_2v)_{L^2(Q)}=\sum_{i=1}^4I_i(Q)$$
contains a boundary term given by
\begin{equation*}
s\la\iint_{\pt Q}\xi  \psi_\e^{2\al} \left|\f{\pt v}{\pt \nu}\right|^2\left|\f{\pt \eta}{\pt \nu}\right| \df S\df t\geq 0,
\end{equation*}
along with other integral terms over $Q$. Since the boundary term is positive, we omit it in the subsequent analysis.
We will estimate $I_1(Q),I_2(Q),I_3(Q),I_4(Q)$ by partitioning $Q$ into three parts: $Q=Q_{2R}^c\cup Q_{\e,2R}\cup Q_\e$, where
\begin{equation*}
Q_{2R}^c=(\Om\se B_{2R})\ts (0,T),\quad Q_{\e,2R}=(B_{2R}\se B_\e)\ts (0,T),\quad Q_\e=B_\e\ts (0,T).
\end{equation*}
Note that
  \begin{equation*}
	\begin{split}
		|\si_{tt}|\leq C\xi^\f{3}{2}, \quad |\xi\xi_t|\leq C\xi^\f{9}{4}\leq C\xi^3,
	\end{split}
\end{equation*}
where the constant $C>0$ depends only on $T$. Furthermore, the following estimates will rely on \eqref{12.18.1} and \eqref{12.18.2}.

\textbf{1)~ Estimation on $Q_{2R}^c$.} In this region, $\eta\in C^{2,1}(\ol\Om)$, $\eta=0$ on $\pt\Om$, $\eta>0$ on $\Om$, and $|\nabla \eta|\geq C>0$ on $\ol{\Om\se (B_R\cup \wh \om)}$.
We have
\begin{equation*}
\begin{split}
I_1(Q_{2R}^c)\geq -Cs\iint_{Q_{2R}^c} \xi^3v^2\df x\df t-Cs^2\la^2\iint_{Q_{2R}^c} \xi^3 v^2\df x\df t.
\end{split}
\end{equation*}
Similarly,
\begin{equation*}
\begin{split}
I_2(Q_{2R}^c)
&\geq -Cs^2\la^2\iint_{Q_{2R}^c} \xi^3v^2\df x\df t,
\end{split}
\end{equation*}
and
\begin{equation*}
\begin{split}
I_3(Q_{2R}^c)
&\geq +2s^3\la^4\iint_{Q_{2R}^c} \xi^3v^2|\nabla\eta|^4\df x\df t.
\end{split}
\end{equation*}
For $I_4(Q_{2R}^c)$, the third to seventh terms can be absorbed by
\begin{equation*}
\begin{split}
Cs\la\iint_{Q_{2R}^c} \xi |\nabla v|^2\df x\df t,
\end{split}
\end{equation*}
and
\begin{eqnarray*}
&&s\la^3\iint_Q \xi \psi_\e^{2\al}v(\nabla v\cdot \nabla\eta)|\nabla\eta|^2\df x\df t\\
&&\geq -s\la^2\iint_{Q_{2R}^c} \xi (\psi_\e^\al \nabla v\cdot \nabla\eta)^2\df x\df t-Cs\la^4\iint_{Q_{2R}^c} \xi v^2\df x\df t.
\end{eqnarray*}
Additionally, from $\eta\in C^{2,1}(\ol\Om)$, we have
\begin{eqnarray*}
&&s\la^2\iint_{Q_{2R}^c}\xi \psi_\e^\al v\nabla v\cdot \nabla [\psi_\e^\al |\nabla \eta|^2]\df x\df t\\
&&+s\la^2\iint_{Q_{2R}^c} \xi \psi_\e^\al v(\nabla v\cdot \nabla \eta)\Div(\psi_\e^\al \nabla\eta)\df x\df t+s\la\iint_{Q_{2R}^c}\xi v\psi_\e^\al \nabla v\cdot \nabla \Div(\psi_\e^\al \nabla\eta)\df x\df t\\
&&\geq -C\la\iint_{Q_{2R}^c} \xi |\nabla v|^2\df x\df t-Cs^2\la^3\iint_{Q_{2R}^c} \xi v^2\df x\df t.
\end{eqnarray*}
Thus, for $\la\geq 1$ and $s\geq 1$ sufficiently large,
\begin{eqnarray*}
I_4(Q_{2R}^c)
&&\geq s\la\iint_{\pt Q}\xi \psi_\e^{2\al}\left|\f{\pt v}{\pt \nu}\right|^2\left|\f{\pt \eta}{\pt \nu}\right|\df S\df t+s\la^2\iint_{Q_{2R}^c} \xi (\psi_\e^\al \nabla v\cdot \nabla\eta)^2\df x\df t\\
&&\hspace{4.5mm}-C\la\iint_{Q_{2R}^c}\xi |\nabla v|^2\df x\df t-Cs^2\la^4\iint_{Q_{2R}^c}\xi^3 v^2\df x\df t,
\end{eqnarray*}
where the constants $C>0$ depend only on $\al, R, T, N$, and $\Om$.
Overall, we obtain
\begin{equation*}
\begin{split}
(P_1v,P_2v)_{L^2(Q_{2R}^c)}
&\geq s\la^2\iint_{Q_{2R}^c}\xi \left(\psi_\e^\al \nabla v\cdot\nabla\eta\right)^2\df x\df t+2s^3\la^4\iint_{Q_{2R}^c}\xi^3v^2\df x\df t\\
&\hspace{4.5mm}-C\la\iint_{Q_{2R}^c}\xi |\nabla v|^2\df x\df t,
\end{split}
\end{equation*}
where the constants $C>0$ depend only on $\al, R, T, N$, and $\Om$.

\textbf{2)~Estimation on $Q_{\e,2R}$.} In this region, $\psi_\e=|x|$, and $\eta=|x|^{2-\al}$ from Lemma \ref{12.26.L1}. Note that
\begin{equation*}
\nabla \eta=(2-\al)|x|^{-\al}x,
\end{equation*}
and
\begin{equation*}
\f{\pt^2\eta}{\pt x_i\pt x_j}=(2-\al)|x|^{-\al}\left[\de_{ij}-\al |x|^{-2}x_ix_j\right],\quad \De\eta=(2-\al)(N-\al)|x|^{-\al}.
\end{equation*}
We have
\begin{eqnarray*}
I_1(Q_{\e,2R})
&&=-\f{s}{2}\iint_{Q_{\e,2R}} v^2\si_{tt}\df x\df t-(2-\al)^2s^2\la^2\iint_{Q_{\e,2R}} \xi\xi_t v^2|x|^{2-\al}\df x\df t\\
&&\geq -Cs\iint_{Q_{\e,2R}}\xi^\f{3}{2}v^2\df x\df t-Cs^2\la^2\iint_{Q_{\e,2R}} \xi^3v^2|x|^{2-\al}\df x\df t\\
&&\geq -C\iint_{Q_{\e,2R}} \xi|x|^\al |\nabla v|^2\df x\df t-Cs^2\la^2\iint_{Q_{\e,2R}} \xi^3 v^2|x|^{2-\al}\df x\df t\\
&&\hspace{4.5mm}-C\iint_{Q_{2R}^c\cup Q_\e} \xi\psi_\e^\al |\nabla v|^2|\nabla \psi_\e|^2\df x\df t-C\la^2\iint_{Q_{2R}^c\cup Q_\e}\xi^3 v^2\psi_\e^{2-\al}|\nabla\psi_\e|^4\df x\df t\\
&&\hspace{4.5mm}-C\iint_{Q_{2R}^c}\xi  v^2\df x\df t-C\e^{\al-2}\iint_{Q_\e}\xi v^2\df x\df t
\end{eqnarray*}
derived from
\begin{eqnarray*}
&&s\iint_{Q_{\e,2R}}\xi^\f{3}{2}v^2\df x\df t\\
&&=\iint_{Q_{\e,2R}} \left(\xi^\f{1}{2}v|x|^{\f{\al}{2}-1}\right)\left(s\xi v|x|^{1-\f{\al}{2}}\right)\df x\df t\\
&&\leq \iint_{Q_{\e,2R}} \xi v^2|x|^{\al-2}\df x\df t+Cs^2\iint_{Q_{\e,2R}} \xi^3v^2|x|^{2-\al}\df x\df t\\
&&\leq C\iint_{Q_{\e,2R}} \xi\psi_\e^\al |\nabla v|^2\df x\df t+C(\la^2+s^2)\iint_{Q_{\e,2R}} \xi^3 v^2|x|^{2-\al}\df x\df t\\
&&\hspace{4.5mm}+C\iint_{Q_{2R}^c\cup Q_\e} \xi\psi_\e^\al |\nabla v|^2|\nabla \psi_\e|^2\df x\df t+C\la^2\iint_{Q_{2R}^c\cup Q_\e}\xi^3 v^2\psi_\e^{2-\al}|\nabla\psi_\e|^4\df x\df t\\
&&\hspace{4.5mm}+C\iint_{Q_{2R}^c}\xi  v^2\df x\df t+C\e^{\al-2}\iint_{Q_\e}\xi v^2\df x\df t
\end{eqnarray*}
according to
\begin{equation}\label{01.16.1}
\begin{split}
&\iint_{Q_{\e,2R}} \xi v^2|x|^{\al-2}\df x\df t=+\iint_{Q_{\e,2R}}\xi v^2\psi_\e^{\al-2} |\nabla \psi_\e|^4\df x\df t\\
&\leq \iint_Q\xi v^2\psi_\e^{\al-2}|\nabla\psi_\e|^4\df x\df t=+\iint_Q (\xi^\f{1}{2}v|\nabla\psi_\e|^2)^2\psi_\e^{\al-2}\df x\df t\\
&\leq C\iint_Q \psi_\e^\al |\nabla (\xi^\f{1}{2}v|\nabla\psi_\e|^2)|^2\df x\df t\\
&\leq C\iint_{Q_{\e,2R}} \xi\psi_\e^\al |\nabla v|^2\df x\df t+C\la^2\iint_{Q_{\e,2R}} \xi^3 v^2|x|^{2-\al}\df x\df t\\
&\hspace{4.5mm}+C\iint_{Q_{2R}^c\cup Q_\e} \xi\psi_\e^\al |\nabla v|^2|\nabla \psi_\e|^4\df x\df t+C\la^2\iint_{Q_{2R}^c\cup Q_\e}\xi^3 v^2\psi_\e^{2-\al}|\nabla\psi_\e|^6\df x\df t\\
&\hspace{4.5mm}+C\iint_{Q_{2R}^c\cup Q_\e}\xi  v^2\psi_\e^\al (D^2\psi_\e\nabla\psi_\e)\cdot (D^2\psi_\e \nabla \psi_\e)\df x\df t\\
&\leq C\iint_{Q_{\e,2R}} \xi\psi_\e^\al |\nabla v|^2\df x\df t+C\la^2\iint_{Q_{\e,2R}} \xi^3 v^2|x|^{2-\al}\df x\df t\\
&\hspace{4.5mm}+C\iint_{Q_{2R}^c\cup Q_\e} \xi\psi_\e^\al |\nabla v|^2|\nabla \psi_\e|^2\df x\df t+C\la^2\iint_{Q_{2R}^c\cup Q_\e}\xi^3 v^2\psi_\e^{2-\al}|\nabla\psi_\e|^4\df x\df t\\
&\hspace{4.5mm}+C\iint_{Q_{2R}^c}\xi  v^2\df x\df t+C\e^{\al-2}\iint_{Q_\e}\xi v^2\df x\df t
\end{split}
\end{equation}
by Lemma \ref{08.16.L2}, \eqref{12.18.1}, \eqref{12.18.2}, and condition ii) of \dref{gbz2}.
Similarly,
\begin{equation*}
\begin{split}
I_2(Q_{\e,2R})
&=-(2-\al)^2s^2\la^2\iint_{Q_{\e,2R}}\xi\xi_t v^2|x|^{2-\al}\df x\df t\geq -Cs^2\la^2\iint_{Q_{\e,2R}} \xi^3v^2|x|^{2-\al}\df x\df t,
\end{split}
\end{equation*}
and
\begin{equation*}
\begin{split}
I_3(Q_{\e,2R})
&=2(2-\al)^4s^3\la^4\iint_{Q_{\e,2R}}\xi^3v^2|x|^{4-2\al}\df x\df t+(2-\al)^4s^3\la^3\iint_{Q_{\e,2R}} \xi^3v^2|x|^{2-\al}\df x\df t.
\end{split}
\end{equation*}
For $I_4(Q_{\e,2R})$, there exists $s_0\geq 1$ (depending only on $\al$) such that, for all $s\geq s_0$ and $\la\geq 1$, we have
\begin{eqnarray*}
&&I_4(Q_{\e,2R})\\
&&=2(2-\al)^2s\la^2\iint_{Q_{\e,2R}}\xi (\nabla v\cdot x)^2\df x\df t+(2-\al)^2s\la\iint_{Q_{\e,2R}}\xi |x|^\al |\nabla v|^2\df x\df t\\
&&\hspace{4.5mm}+(2-\al)^3s\la^3\iint_{Q_{\e,2R}}\xi v(\nabla v\cdot x)|x|^{2-\al}\df x\df t+(2-\al)^2(N-\al+2)s\la^2\iint_{Q_{\e,2R}} \xi v(\nabla v\cdot x)\df x\df t\\
&&\geq (2-\al)^2s\la^2\iint_{Q_{\e,2R}}\xi (\nabla v\cdot x)^2\df x\df t+(2-\al)^2s\la\iint_{Q_{\e,2R}}\xi |x|^\al |\nabla v|^2\df x\df t\\
&&\hspace{4.5mm}-Cs^2\la^3\iint_{Q_{\e,2R}} \xi^3v^2|x|^{2-\al}\df x\df t-Cs^2\la^4\iint_{Q_{\e,2R}}\xi^3v^2|x|^{4-2\al}\df x\df t\\
&&\hspace{4.5mm}-C\la \iint_{Q_{2R}^c\cup Q_\e} \xi\psi_\e^\al |\nabla v|^2|\nabla \psi_\e|^2\df x\df t-C\la^3\iint_{Q_{2R}^c\cup Q_\e}\xi^3 v^2\psi_\e^{2-\al}|\nabla\psi_\e|^4\df x\df t\\
&&\hspace{4.5mm}-C\la \iint_{Q_{2R}^c}\xi  v^2\df x\df t-C\la \e^{\al-2}\iint_{Q_\e}\xi v^2\df x\df t
\end{eqnarray*}
derived from
\begin{eqnarray*}
&&(2-\al)^3s\la^3\iint_{Q_{\e,2R}} \xi v^2(\nabla v\cdot x)|x|^{2-\al}\df x\df t\\
&&\geq -\f{1}{2}(2-\al)^3\la^2 \iint_{Q_{\e,2R}} \xi (\nabla v\cdot x)^2\df x\df t-Cs^2\la^4\iint_{Q_{\e,2R}} \xi^3v^2|x|^{4-2\al}\df x\df t
\end{eqnarray*}
and
\begin{eqnarray*}
&&(2-\al)^2(N-\al+2)s\la^2\iint_{Q_{\e,2R}}\xi v(\nabla v\cdot x)\df x\df t\\
&&\geq -\f{1}{2}(2-\al)^2s\la^2\iint_{Q_{\e,2R}} \xi(\nabla v\cdot x)^2\df x\df t-Cs\la^2\iint_{Q_{\e,2R}} \xi v^2\df x\df t\\
&&\geq -\f{1}{2}(2-\al)^2s\la^2\iint_{Q_{\e,2R}} \xi(\nabla v\cdot x)^2\df x\df t\\
&&\hspace{4.5mm}-C\la \iint_{Q_{\e,2R}} \xi\psi_\e^\al |\nabla v|^2\df x\df t-Cs^2\la^3\iint_{Q_{\e,2R}} \xi^3 v^2|x|^{2-\al}\df x\df t\\
&&\hspace{4.5mm}-C\la \iint_{Q_{2R}^c\cup Q_\e} \xi\psi_\e^\al |\nabla v|^2|\nabla \psi_\e|^2\df x\df t-C\la^3\iint_{Q_{2R}^c\cup Q_\e}\xi^3 v^2\psi_\e^{2-\al}|\nabla\psi_\e|^4\df x\df t\\
&&\hspace{4.5mm}-C\la \iint_{Q_{2R}^c}\xi  v^2\df x\df t-C\la \e^{\al-2}\iint_{Q_\e}\xi v^2\df x\df t
\end{eqnarray*}
according to
\begin{eqnarray*}
&&Cs\la^2\iint_{Q_{\e,2R}}\xi v^2\df x\df t=+Cs\la^2\iint_{Q_{\e,2R}}\xi v^2|\nabla\psi_\e|^2\df x\df t\\
&&=C \iint_{Q_{\e,2R}}\left(s \la^\f{3}{2}\xi^\f{1}{2} v\psi_\e^{1-\f{\al}{2}}\right)\left(\la^\f{1}{2}\xi^\f{1}{2}v \psi_\e^{\f{\al}{2}-1}|\nabla\psi_\e|^2\right)\df x\df t\\
&&\leq Cs^2\la^3\iint_{Q_{\e,2R}} \xi^3 v^2\psi_\e^{2-\al}\df x\df t+C\la\iint_{Q_{\e,2R}} \xi v^2\psi_\e^{\al-2}|\nabla\psi_\e|^4\df x\df t\\
&&\leq C\la \iint_{Q_{\e,2R}} \xi\psi_\e^\al |\nabla v|^2\df x\df t+Cs^2\la^3\iint_{Q_{\e,2R}} \xi^3 v^2|x|^{2-\al}\df x\df t\\
&&\hspace{4.5mm}+C\la \iint_{Q_{2R}^c\cup Q_\e} \xi\psi_\e^\al |\nabla v|^2|\nabla \psi_\e|^2\df x\df t+C\la^3\iint_{Q_{2R}^c\cup Q_\e}\xi^3 v^2\psi_\e^{2-\al}|\nabla\psi_\e|^4\df x\df t\\
&&\hspace{4.5mm}+C\la \iint_{Q_{2R}^c}\xi  v^2\df x\df t+C\la \e^{\al-2}\iint_{Q_\e}\xi v^2\df x\df t
\end{eqnarray*}
by \eqref{01.16.1}.
Overall, we have
\begin{eqnarray*}
&&(P_1v,P_2v)_{L^2(Q_{\e,2R})}\\
&&\geq 2(2-\al)^4s^3\la^4\iint_{Q_{\e,2R}}\xi^3v^2|x|^{4-2\al}\df x\df t+(2-\al)^4s^3\la^3\iint_{Q_{\e,2R}} \xi^3v^2|x|^{2-\al}\df x\df t\\
&&\hspace{4.5mm}+(2-\al)^2s\la^2\iint_{Q_{\e,2R}}\xi (\nabla v\cdot x)^2\df x\df t+(2-\al)^2s\la\iint_{Q_{\e,2R}}\xi |x|^\al |\nabla v|^2\df x\df t\\
&&\hspace{4.5mm}-C\la \iint_{Q_{2R}^c\cup Q_\e} \xi\psi_\e^\al |\nabla v|^2|\nabla \psi_\e|^2\df x\df t-C\la^3\iint_{Q_{2R}^c\cup Q_\e}\xi^3 v^2\psi_\e^{2-\al}|\nabla\psi_\e|^4\df x\df t\\
&&\hspace{4.5mm}-C\la \iint_{Q_{2R}^c}\xi  v^2\df x\df t-C\la \e^{\al-2}\iint_{Q_\e}\xi v^2\df x\df t,
\end{eqnarray*}
where the constants $C>0$ depend only on $\al, R, T, N$, and $\Om$.

\textbf{3)~Estimation on $Q_\e$.}  In this region, $\psi_\e=\f{3\e}{8}+\f{3}{4\e}|x|^2-\f{1}{8\e^3}|x|^4$ for $|x|<\e$, and $\eta=\psi_\e^{2-\al}$ from Lemma \ref{12.26.L1}. Note that
\begin{equation*}
\nabla \eta=(2-\al)\psi_\e^{1-\al}\nabla\psi_\e, \quad
\end{equation*}
and
\begin{equation*}
\f{\pt^2\eta}{\pt x_i\pt x_j}=(2-\al)\psi_\e^{1-\al}\f{\pt^2\psi_\e}{\pt x_i\pt x_j}+(2-\al)(1-\al)\psi_\e^{-\al}\f{\pt\psi_\e}{\pt x_i}\f{\pt \psi_\e}{\pt x_j},
\end{equation*}
and
\begin{equation*}
\De\eta =(2-\al)\psi_\e^{1-\al}\De\psi_\e+(2-\al)(1-\al)\psi_\e^{-\al} |\nabla\psi_\e|^2.
\end{equation*}
Moreover, we have
\begin{equation}\label{01.17.1}
\psi_\e\left(\f{3}{2\e}-\f{1}{2\e^3}|x|^2\right)- |\nabla\psi_\e|^2=\f{3}{16\e^6}(\e^2-|x|^2)^2(3\e^2-|x|^2)\geq 0
\end{equation}
and
\begin{equation}\label{01.17.2}
(\nabla v\cdot \nabla \psi_\e)^2-\f{1}{\e^3}\psi_\e(\nabla v\cdot x)^2=(\nabla v\cdot x)^2\left[\f{3}{8\e^6}(\e^2-|x|^2)(5\e^2-|x|^2)\right]\geq 0,
\end{equation}
and
\begin{equation}\label{01.17.3}
\begin{split}
& \nabla\left[|\nabla \psi_\e|^2+\psi_\e\De\psi_\e \right]=\f{3}{8\e^6}\left[5(2+N)\e^4-6(4+N)\e^2|x|^2+(6+N)|x|^4\right]x,\\
& -\f{3}{\e^2}\leq   \f{3}{8\e^6}\left[5(2+N)\e^4-6(4+N)\e^2|x|^2+(6+N)|x|^4\right]\leq \f{15(N+2)}{8\e^2}.
\end{split}
\end{equation}
We have
\begin{equation*}
\begin{split}
I_1(Q_\e)
&\geq -Cs\iint_{Q_\e}\xi^\f{3}{2}v^2\df x\df t-Cs^2\la^2\iint_{Q_\e}\xi^\f{9}{4} v^2\psi_\e^{2-\al}|\nabla \psi_\e|^2\df x\df  t\\
&\geq -Cs\iint_{Q_\e}\xi^\f{3}{2}v^2\df x\df t-Cs^2\la^2\e^{2-\al}\iint_{Q_\e}\xi^\f{9}{4} v^2\df x\df  t.
\end{split}
\end{equation*}
Similarly,
\begin{equation*}
\begin{split}
I_2(Q_\e)\geq -Cs^2\la^2 \iint_{Q_\e}\xi^\f{9}{4} v^2\psi_\e^{2-\al}|\nabla\psi_\e|^2\df x\df t\geq -Cs^2\la^2\e^{2-\al}\iint_{Q_\e}\xi^\f{9}{4}v^2\df x\df t,
\end{split}
\end{equation*}
and
\begin{eqnarray*}
I_3(Q_\e)
&&=2(2-\al)^4s^3\la^4\iint_{Q_\e}\xi^3v^2\psi_\e^{4-2\al}|\nabla\psi_\e|^4\df x\df t+(2-\al)^4s^3\la^3\iint_{Q_\e}\xi^3v^2\psi_\e^{2-\al}|\nabla\psi_\e|^4\df x\df t\\
&&\hspace{4.5mm}+2(2-\al)^3s^3\la^3\iint_{Q_\e}\xi^3v^2\psi_\e^{3-\al}(D^2\psi_\e\nabla\psi_\e)\cdot\nabla\psi_\e\df x\df t\\
&& =2(2-\al)^4s^3\la^4\iint_{Q_\e}\xi^3v^2\psi_\e^{4-2\al}|\nabla\psi_\e|^4\df x\df t+(2-\al)^4s^3\la^3\iint_{Q_\e}\xi^3v^2\psi_\e^{2-\al}|\nabla\psi_\e|^4\df x\df t\\
&&\hspace{4.5mm}+2(2-\al)^3s^3\la^3\iint_{Q_\e}\xi^3v^2\psi_\e^{2-\al}|\nabla\psi_\e|^2 \left[\f{3}{16\e^{6}}(\e^2-|x|^2)(3\e^4+6\e^2|x|^2-|x|^4)\right]\df x\df t\\
&&\geq 2(2-\al)^4s^3\la^4\iint_{Q_\e}\xi^3v^2\psi_\e^{4-2\al}|\nabla\psi_\e|^4\df x\df t+(2-\al)^4s^3\la^3\iint_{Q_\e}\xi^3v^2\psi_\e^{2-\al}|\nabla\psi_\e|^4\df x\df t.
\end{eqnarray*}
From the 8th term in $I_4(Q_\e)$,
\begin{equation*}
\begin{split}
&(2-\al)^3s\la^3\iint_{Q_\e}\xi \psi_\e^{3-\al}v(\nabla v\cdot \nabla\psi_\e)|\nabla \psi_\e|^2\df x\df t\\
&\geq -(2-\al)^2s\la^2\iint_{Q_\e}\xi\psi_\e^2(\nabla v\cdot \nabla\psi_\e)^2\df x\df t-Cs\la^4\iint_{Q_\e}\xi^3 v^2 \psi_\e^{4-2\al}|\nabla\psi_\e|^4\df x\df t,
\end{split}
\end{equation*}
and from the 9th-10th terms in $I_4(Q_\e)$,
\begin{equation*}
\begin{split}
&(2-\al)^2(3-\al)s\la^2\iint_{Q_\e}\xi \psi_\e v(\nabla v\cdot \nabla\psi_\e)|\nabla \psi_\e|^2\df x\df t\\
&+(N+2)(2-\al)^2s\la^2\iint_{Q_\e}\xi\psi_\e^2 v\left(\f{3}{2\e}-\f{1}{2\e^3}|x|^2\right)(\nabla v\cdot \nabla \psi_\e)\df x\df t\\
&-3(2-\al)^2s\la^2\iint_{Q_\e}\xi \psi_\e^2 v\f{1}{\e^3}|x|^2(\nabla v\cdot \nabla \psi_\e)\df x\df t\\
&\geq -Cs\la\iint_{Q_\e}\xi \psi_\e^2 (\nabla v\cdot \nabla \psi_\e)^2\df x\df t+Cs\la^3\iint_{Q_\e} \xi v^2\df x\df t
\end{split}
\end{equation*}
by \eqref{12.16.1}  and \eqref{12.18.1},
and from the 11th term in $I_4(Q_\e)$,
\begin{eqnarray*}
&&(2-\al)s\la\iint_{Q_\e}\xi \psi_\e^\al v (\nabla v\cdot x)\f{3}{8\e^6}\left[5(2+N)\e^4-6(4+N)\e^2|x|^2+(6+N)|x|^4\right]\df x\df t\\
&&\geq -Cs\la\iint_{Q_\e}\xi \psi_\e^\al |v| |\nabla v||\nabla\psi_\e|\f{1}{\f{3}{2\e}-\f{1}{2\e^3}|x|^2}\e^{-2}\df x\df t\\
&&\geq -C\la\iint_{Q_\e} \xi \psi_\e^\al |\nabla v|^2|\nabla\psi_\e|^2\df x\df t-Cs^2\la\e^{\al-2}\iint_{Q_\e} \xi  v^2\df x\df t
\end{eqnarray*}
by \eqref{01.17.3}.
Thus, there exist $\la_0\geq 1$ such that for all $\la\geq \la_0$ and for all $s\geq 1$, we have
\begin{eqnarray*}
&&I_4(Q_\e)\\
&&=2(2-\al)^2s\la^2\iint_{Q_\e}\xi\psi_\e^2(\nabla v\cdot \nabla\psi_\e)^2\df x\df t\\
&&\hspace{4.5mm}+2(2-\al)s\la\iint_{Q_\e} \xi \psi_\e^\al (\nabla v\cdot \nabla \psi_\e)^2\df x\df t-2(2-\al)s\la\iint_{Q_\e}\xi \psi_\e^{\al+1}\f{1}{\e^3}(\nabla v\cdot x)^2\df x\df t\\
&&\hspace{4.5mm}-\al(2-\al)s\la\iint_{Q_\e}\xi\psi_\e^\al |\nabla v|^2|\nabla\psi_\e|^2\df x\df t+2(2-\al)s\la\iint_{Q_\e}\xi \psi_\e^{\al+1}\left(\f{3}{2\e}-\f{1}{2\e^3}|x|^2\right)|\nabla v|^2\df x\df t\\
&&\hspace{4.5mm}+(2-\al)^3s\la^3\iint_{Q_\e}\xi \psi_\e^{3-\al}v(\nabla v\cdot \nabla \psi_\e)|\nabla\psi_\e|^2\df x\df t\\
&&\hspace{4.5mm}+(2-\al)^2(3-\al)s\la^2\iint_{Q_\e}\xi \psi_\e v(\nabla v\cdot \nabla\psi_\e)|\nabla\psi_\e|^2\df x\df t\\
&&\hspace{4.5mm}+(N+2)(2-\al)^2s\la^2\iint_{Q_\e}\xi\psi_\e^2 v\left(\f{3}{2\e}-\f{1}{2\e^3}|x|^2\right)(\nabla v\cdot \nabla \psi_\e)\df x\df t\\
&&\hspace{4.5mm}-3(2-\al)^2s\la^2\iint_{Q_\e}\xi \psi_\e^2 v\f{1}{\e^3}|x|^2(\nabla v\cdot \nabla \psi_\e)\df x\df t\\
&&\hspace{4.5mm}+(2-\al)s\la\iint_{Q_\e}\xi \psi_\e^\al v (\nabla v\cdot x)\f{3}{8\e^6}\left[5(2+N)\e^4-6(4+N)\e^2|x|^2+(6+N)|x|^4\right]\df x\df t\\
&&\geq +(2-\al)^2s\la^2\iint_{Q_\e}\xi\psi_\e^2(\nabla v\cdot \nabla\psi_\e)^2\df x\df t+(2-\al)^2s\la\iint_{Q_\e}\xi\psi_\e^\al |\nabla v|^2|\nabla\psi_\e|^2\df x\df t\\
&&\hspace{4.5mm}-Cs\la^4\iint_{Q_\e}\xi^3v^2\psi_\e^{4-2\al}|\nabla\psi_\e|^4\df x\df t-Cs\la^3\iint_{Q_\e}\xi v^2\df x\df t -Cs\la\e^{\al-2}\iint_{Q_\e}\xi v^2\df x\df t,
\end{eqnarray*}
where the 2nd-3rd terms in the equality use \eqref{01.17.2}, and the 4th-5th terms used \eqref{01.17.1}.
Overall, we have
\begin{eqnarray*}
&&(P_1v,P_2v)_{L^2(Q_\e)}\\
&&\geq 2(2-\al)^4s^3\la^4\iint_{Q_\e}\xi^3v^2\psi_\e^{4-2\al}|\nabla\psi_\e|^4\df x\df t+(2-\al)^4s^3\la^3\iint_{Q_\e}\xi^3v^2\psi_\e^{2-\al}|\nabla\psi_\e|^4\df x\df t\\
&&\hspace{4.5mm} +(2-\al)^2s\la^2\iint_{Q_\e}\xi\psi_\e^2(\nabla v\cdot \nabla\psi_\e)^2\df x\df t+(2-\al)^2s\la\iint_{Q_\e}\xi\psi_\e^\al |\nabla v|^2|\nabla\psi_\e|^2\df x\df t\\
&&\hspace{4.5mm}-Cs\la^4\iint_{Q_\e}\xi^3v^2\psi_\e^{4-2\al}|\nabla\psi_\e|^4\df x\df t-Cs^2\la^3\e^{\al-2}\iint_{Q_\e}\xi v^2\df x\df t
\end{eqnarray*}
where the constants depend only on $\al, R, T, N$, and $\Om$.

\textbf{ 4)~Overall Estimation.}  From estimations  {\bf { 1)-3)} } in this section, we have
\begin{eqnarray*}
&&\|P_1v\|_{L^2(Q)}^2+\|P_2v\|_{L^2(Q)}^2+s\la\iint_{\pt Q}\xi \psi_\e^{2\al}\left|\f{\pt v}{\pt \nu}\right|^2\left|\f{\pt \eta}{\pt \nu}\right|\df S\df t\\
&&+s\la^2\iint_{Q} \xi (\psi_\e^\al\nabla v\cdot \nabla\eta)^2\df x\df t+s^3\la^4\iint_Q \xi^3v^2(\psi_\e^\al |\nabla\eta|^2)^2\df x\df t\\
&&+ s\la\iint_{Q_{2R}}\xi \psi_\e^\al |\nabla v|^2|\nabla\psi_\e|^2\df x\df t+s^3\la^3\iint_{Q_{2R}}\xi^3v^2\psi_\e^{2-\al}|\nabla \psi_\e|^4\df x\df t\\
&&\leq C\|P_1v\|_{L^2(Q)}^2+C\|P_2v\|_{L^2(Q)}^2+Cs\la\iint_{\pt Q}\xi \psi_\e^{2\al}\left|\f{\pt v}{\pt \nu}\right|^2\left|\f{\pt \eta}{\pt \nu}\right|\df S\df t\\
&&\hspace{4.5mm}Cs\la^2\iint_{Q_{2R}^c}\xi (\psi_\e^\al \nabla v\cdot \nabla\eta)^2\df x\df t+Cs^3\la^4\iint_{Q_{2R}^c}\xi ^3v^2|\nabla\eta|^4\df x\df t\\
&&\hspace{4.5mm}+Cs\la^2\iint_{Q_{\e,2R}}\xi (\nabla v\cdot x)^2\df x\df t+Cs\la\iint_{Q_{\e,2R}}\xi |x|^\al |\nabla v|^2\df x\df t\\
&&\hspace{4.5mm}+Cs^3\la^4\iint_{Q_{\e,2R}}\xi^3v^2|x|^{4-2\al}\df x\df t+Cs^3\la^3\iint_{Q_{\e,2R}}\xi^3v^2|x|^{2-\al}\df x\df t\\
&&\hspace{4.5mm}+Cs\la^2\iint_{Q_\e}\xi \psi_\e^2(\nabla v\cdot \nabla\psi_\e)^2\df x\df t+Cs\la\iint_{Q_\e}\xi \psi_\e^\al |\nabla v|^2|\nabla \psi_\e|^2\df x\df t\\
&&\hspace{4.5mm}+Cs^3\la^4\iint_{Q_\e}\xi^3v^2\psi_\e^{4-2\al}|\nabla\psi_\e|^4\df x\df t+Cs^3\la^3\iint_{Q_\e}\xi^3v^2\psi_\e^{2-\al}|\nabla\psi_\e|^4\df x\df t\\
&&\leq C\|e^{-s\si}g\|{L^2(Q)}^2+Cs^3\la^4\iint_{\wh\om\ts (0,T)}\xi^3v^2\df x\df t+Cs\la\iint_{Q_{2R}^c}\xi |\nabla v|^2\df x\df t\\
&&\hspace{4.5mm}+Cs^2\la^3\e^{2-\al}\iint_{Q_\e}\xi^3 v^2\df x\df t.
\end{eqnarray*}

\textbf{5)~ Estimation of $v_t$ and $\Div(\psi_\e^\al \nabla v)$.}
By the definition of $P_1v$ and $P_2v$ of \dref{gbz3}, we have
\begin{equation*}
\begin{split}
&s^{-1}\iint_{Q}\xi^{-1}|v_t|^2\df x\df t\\
&\leq Cs^{-1}\|P_1v\|_{L^2(Q)}^2+Cs\iint_{Q}\xi^{-1}\psi_\e^{2\al}(\nabla v\cdot \nabla \si)^2\df x\df t+Cs\iint_Q \xi^{-1}\big|\Div(v\psi_\e^\al\nabla \si)\big|^2\df x\df t\\
&\leq Cs^{-1}\|P_1v\|_{L^2(Q)}^2+Cs\la^2\iint_Q \xi (\psi_\e^\al \nabla v\cdot \nabla\eta)^2\df x\df t+Cs\la^4\iint_Q \xi^3 v^2(\psi_\e^\al |\nabla\eta|^2)^2\df x\df t\\
&\hspace{4.5mm}+Cs\la^2\iint_{Q_{2R}^c}\xi v^2\df x\df t+Cs\la^2\iint_{Q_{\e,2R}}\xi v^2\df x\df t+Cs\la^2\iint_{Q_\e}\xi v^2\df x\df t\\
&\leq Cs^{-1}\|P_1v\|_{L^2(Q)}^2+Cs\la^2\iint_Q \xi (\psi_\e^\al \nabla v\cdot \nabla\eta)^2\df x\df t+Cs\la^4\iint_Q \xi^3 v^2(\psi_\e^\al |\nabla\eta|^2)^2\df x\df t\\
&\hspace{4.5mm}+Cs\la^2\iint_{Q_{2R}^c} \xi v^2\df x\df t+Cs\la^2\iint_{Q_\e}\xi v^2\df x\df t\\
&\hspace{4.5mm}+C\la \iint_{Q_{\e,2R}} \xi\psi_\e^\al |\nabla v|^2\df x\df t+Cs^2\la^3\iint_{Q_{\e,2R}} \xi^3 v^2|x|^{2-\al}\df x\df t\\
&\hspace{4.5mm}+C\la \iint_{Q_{2R}^c\cup Q_\e} \xi\psi_\e^\al |\nabla v|^2|\nabla \psi_\e|^2\df x\df t+C\la^3\iint_{Q_{2R}^c\cup Q_\e}\xi^3 v^2\psi_\e^{2-\al}|\nabla\psi_\e|^4\df x\df t\\
&\hspace{4.5mm}+C\la \iint_{Q_{2R}^c}\xi  v^2\df x\df t+C\la \e^{\al-2}\iint_{Q_\e}\xi v^2\df x\df t
\end{split}
\end{equation*}
by the estimate $I_4(Q_{\e,2R})$.
Similarly,
\begin{eqnarray*}
&&s^{-1}\iint_Q \xi^{-1}\big|\Div(\psi_\e^\al \nabla v)\big|^2\df x\df t\\
&&\leq Cs^{-1}\|P_2v\|_{L^2(Q)}^2+Cs^3\iint_Q \xi^{-1} v^2\psi_\e^{2\al} |\nabla\si|^4\df x\df t+Cs\iint_{Q} \xi^{-1}|\si_t|^2v^2\df x\df t\\
&&\leq Cs^{-1}\|P_2v\|_{L^2(Q)}^2+Cs^3\la^4\iint_Q \xi^3v^2(\psi_\e^\al|\nabla\eta|^2)^2\df x\df t+Cs\iint_Q \xi^\f{3}{2} v^2\df x\df t\\
&&\leq +Cs^{-1}\|P_2v\|_{L^2(Q)}^2+Cs^3\la^4\iint_Q \xi^3v^2(\psi_\e^\al|\nabla\eta|^2)^2\df x\df t \\
&&\hspace{4.5mm}+Cs\iint_{Q_{2R}^c} \xi^3v^2\df x\df t+Cs\iint_{Q_\e}\xi^\f{3}{2}v^2\df x\df t\\
&&\hspace{4.5mm}+C\iint_{Q_{\e,2R}} \xi\psi_\e^\al |\nabla v|^2\df x\df t+C(\la^2+s^2)\iint_{Q_{\e,2R}} \xi^3 v^2|x|^{2-\al}\df x\df t\\
&&\hspace{4.5mm}+C\iint_{Q_{2R}^c\cup Q_\e} \xi\psi_\e^\al |\nabla v|^2|\nabla \psi_\e|^2\df x\df t+C\la^2\iint_{Q_{2R}^c\cup Q_\e}\xi^3 v^2\psi_\e^{2-\al}|\nabla\psi_\e|^4\df x\df t\\
&&\hspace{4.5mm}+C\iint_{Q_{2R}^c}\xi  v^2\df x\df t+C\e^{\al-2}\iint_{Q_\e}\xi v^2\df x\df t
\end{eqnarray*}
by the estimate $I_1(Q_{\e,2R})$.

\textbf{6)~ From (4) and (5), we obtain}
\begin{eqnarray*}
		&&s^{-1}\iint_Q \xi^{-1}\left(|v_t|^2+|\Div(\psi_\e^\al \nabla v)|^2\right)\df x\df t +s\la\iint_{\pt Q}\xi \psi_\e^{2\al}\left|\f{\pt v}{\pt \nu}\right|^2\left|\f{\pt \eta}{\pt \nu}\right|\df S\df t\\
		&&+s\la^2\iint_{Q} \xi (\psi_\e^\al\nabla v\cdot \nabla\eta)^2\df x\df t+s^3\la^4\iint_Q \xi^3v^2(\psi_\e^\al |\nabla\eta|^2)^2\df x\df t\\
		&&+ s\la\iint_{Q_{2R}}\xi \psi_\e^\al |\nabla v|^2|\nabla\psi_\e|^2\df x\df t+s^3\la^3\iint_{Q_{2R}}\xi^3v^2\psi_\e^{2-\al}|\nabla \psi_\e|^4\df x\df t\\
		&&\leq C\|e^{-s\si}g\|_{L^2(Q)}^2+Cs^3\la^4\iint_{\wh\om\ts (0,T)}\xi^3v^2\df x\df t+Cs\la\iint_{Q_{2R}^c}\xi |\nabla v|^2\df x\df t\\
		&&\hspace{4.5mm}+Cs^2\la^3\e^{\al-2}\iint_{Q_\e}\xi^3 v^2\df x\df t.
\end{eqnarray*}

\textbf{ 7)~ We choose $\zeta\in C^\iy(\R^N), 0\leq \zeta\leq 1$ satisfying}
\begin{equation*}
	\zeta=1 \mbox{ on } \R^N\se B_{2R}, \quad \zeta=0 \mbox{ on } B_R, \quad |\nabla\zeta|\leq \f{C}{R} \mbox{ on } \R^N.
\end{equation*}
Then,  we have
\begin{eqnarray*}
		&&s\la\iint_{Q_{2R}^c} \xi \psi_\e^\al |\nabla v|^2\df x\df t\\
		&&\leq +s\la\iint_Q \nabla v\cdot (\xi \zeta \psi_\e^\al\nabla  v)\df x\df t=-s\la\iint_Q v\Div(\xi \zeta \psi_\e^\al \nabla v)\df x\df t\\
		&&\leq -s\la \iint_Q \xi v\psi_\e^\al \nabla \zeta\cdot \nabla v\df x\df t-s\la^2\iint_Q \zeta \xi v\psi_\e^\al \nabla v\cdot \nabla\eta\df x\df t-s\la\iint_Q \zeta \xi v\Div(\psi_\e^\al \nabla v)\df x\df t\\
		&&\leq +Cs^\f{1}{2}\la \iint_{Q_{\e,2R}}\xi \psi_\e^\al |\nabla v|^2\df x\df t+Cs^\f{3}{2}\la\iint_Q \xi v^2\psi_\e^\al |\nabla\zeta|^2\df x\df t\\
		&&\hspace{4.5mm}+Cs^2\la^2\iint_Q \zeta\xi v^2 \df x\df t+C\la^2\iint_Q\zeta \xi (\psi_\e^\al \nabla v\cdot \nabla\eta)^2\df x\df t\\
		&&\hspace{4.5mm}+Cs^{-1}\la^{-\f{1}{2}}\iint_Q \zeta \xi^{-1}\big|\Div(\psi_\e^\al \nabla v)\big|^2\df x\df t+Cs^3\la^\f{5}{2}\iint_Q \zeta \xi^3v^2\df x\df t\\
		&&\leq Cs^\f{1}{2}\la \iint_{Q_{\e, 2R}}\xi \psi_\e^\al |\nabla v|^2\df x\df t+C\la^2\iint_Q\zeta \xi (\psi_\e^\al \nabla v\cdot \nabla\eta)^2\df x\df t\\
		&&\hspace{4.5mm}+ Cs^{-1}\la^{-\f{1}{2}}\iint_Q \zeta \xi^{-1}\big|\Div(\psi_\e^\al \nabla v)\big|^2\df x\df t+C\f{1}{R^{4-2\al}}s^\f{3}{2}\la\iint_{Q_{\e,2R}} \xi^3 v^2\psi_\e^{2-\al}|\nabla\psi_\e|^4\df x\df t\\
		&&\hspace{4.5mm}+C\f{1}{R^{2-\al}}s^3\la^\f{5}{2}\iint_{Q_{\e,2R}} \xi^3 v^2\psi_\e^{2-\al}|\nabla\psi_\e|^4\df x\df t+Cs^3\la^\f{5}{2}\int_0^T\int_{\Om\se (B_{2R}\cup \wh\om)} \xi^3v^2(\psi_\e^\al |\nabla\eta|^2)^2\df x\df t\\
		&&\hspace{4.5mm}+Cs^3\la^\f{5}{2}\iint_{\wh\om\ts (0,T)}\xi^3v^2\df x\df t.
\end{eqnarray*}
Hence,
\begin{equation}\label{01.18.1}
	\begin{split}
		&s^{-1}\iint_Q \xi^{-1}\left(|v_t|^2+|\Div(\psi_\e^\al \nabla v)|^2\right)\df x\df t +s\la\iint_{\pt Q}\xi \psi_\e^{2\al}\left|\f{\pt v}{\pt \nu}\right|^2\left|\f{\pt \eta}{\pt \nu}\right|\df S\df t\\
		&+s\la^2\iint_{Q} \xi (\psi_\e^\al\nabla v\cdot \nabla\eta)^2\df x\df t+s^3\la^4\iint_Q \xi^3v^2(\psi_\e^\al |\nabla\eta|^2)^2\df x\df t\\
		&+ s\la\iint_{Q}\xi \psi_\e^\al |\nabla v|^2|\nabla\psi_\e|^2\df x\df t+s^3\la^3\iint_{Q_{2R}}\xi^3v^2\psi_\e^{2-\al}|\nabla \psi_\e|^4\df x\df t\\
		&\leq C\|e^{-s\si}g\|_{L^2(Q)}^2+Cs^3\la^4\iint_{\wh\om\ts (0,T)}\xi^3v^2\df x\df t+Cs^2\la^3\e^{\al-2}\iint_{Q_\e}\xi^3 v^2\df x\df t.
	\end{split}
\end{equation}

\textbf{ 8)~ By \eqref{01.18.1}, we obtain}
\begin{eqnarray}\label{06.10.1}
		&&s\la^2\iint_{Q} \xi (\psi_\e^\al\nabla v\cdot \nabla\eta)^2\df x\df t+s^3\la^4\iint_Q \xi^3v^2(\psi_\e^\al |\nabla\eta|^2)^2\df x\df t\crr
		&&+ s\la\iint_{Q}\xi \psi_\e^\al |\nabla v|^2|\nabla\psi_\e|^2\df x\df t+s^3\la^3\iint_{Q_{2R}}\xi^3v^2\psi_\e^{2-\al}|\nabla \psi_\e|^4\df x\df t\crr
		&&\leq C\|e^{-s\si}g\|_{L^2(Q)}^2+Cs^3\la^4\iint_{\wh\om\ts (0,T)}\xi^3v^2\df x\df t+Cs^2\la^3\e^{\al-2}\iint_{Q_\e}\xi^3 v^2\df x\df t.
\end{eqnarray}
 Returning to the solutions of \eqref{06.08.1}, we note that
\begin{equation*}
\nabla u = e^{s\si} \left[\nabla v + sv\nabla\si\right] \quad \Longleftrightarrow \quad e^{-s\si}\nabla u = \nabla v - s\la \xi v\nabla\eta.
\end{equation*}
Consequently, from \eqref{06.10.1}, we derive the following inequality:
\begin{eqnarray*}
&& s\la \iint_{Q} \xi \psi_\e^\al |\nabla u|^2 |\nabla\psi_\e|^2 e^{-2s\si} \, \mathrm{d}x \, \mathrm{d}t + s^3\la^4 \iint_Q \xi^3 u^2 (\psi_\e^\al |\nabla\eta|^2)^2 e^{-2s\si} \, \mathrm{d}x \, \mathrm{d}t \\
&&\leq C \|e^{-s\si} g\|_{L^2(Q)}^2 + Cs^3\la^4 \iint_{\wh\om \times (0,T)} \xi^3 u^2 e^{-2s\si} \, \mathrm{d}x \, \mathrm{d}t + Cs^2\la^3 \e^{\al-2} \iint_{Q_\e} \xi^3 u^2 e^{-2s\si} \, \mathrm{d}x \, \mathrm{d}t,
\end{eqnarray*}
where the positive constant $C$ depends solely on $\al$, $R$, $T$, $N$, and $\Om$.
This leads us to the following Theorem \ref{06.10.T1}.

\begin{theorem}\label{06.10.T1}
Let $T > 0$, and let $\Om \subset \R^N$ be a $C^3$ bounded domain with $0 \in \Om$. Let $R > 0$ be such that $B_{4R} \subset \Om$, and let $\wh\om = B_{2R}(x_0) \subset B_{3R}(x_0) \subset \Om$ be an open subset satisfying $B_{3R} \cap B_{3R}(x_0) = \varnothing$. Let $\eta$ be defined as in Lemma \ref{12.26.L1}. Then, there exist positive constants $C$, $\la_0 \geq 1$, and $s_0 \geq 1$, depending only on $\al$, $R$, $T$, $N$, and $\Om$, such that for every solution $u(\cdot)$ of \eqref{06.08.1}, and for all $s \geq s_0$ and $\la \geq \la_0$, the following inequality holds:
\begin{equation} \label{06.10.2}
\begin{split}
& s\la \iint_{Q} \xi \psi_\e^\al |\nabla u|^2 |\nabla\psi_\e|^2 e^{-2s\si} \, \mathrm{d}x \, \mathrm{d}t + s^3\la^4 \iint_Q \xi^3 u^2 (\psi_\e^\al |\nabla\eta|^2)^2 e^{-2s\si} \, \mathrm{d}x \, \mathrm{d}t \\
&\leq C \|e^{-s\si} g\|_{L^2(Q)}^2 + Cs^3\la^4 \iint_{\wh\om \times (0,T)} \xi^3 u^2 e^{-2s\si} \, \mathrm{d}x \, \mathrm{d}t + Cs^2\la^3 \e^{\al-2} \iint_{Q_\e} \xi^3 u^2 e^{-2s\si} \, \mathrm{d}x \, \mathrm{d}t.
\end{split}
\end{equation}
\end{theorem}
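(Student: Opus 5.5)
The argument assembles the computations of Section \ref{S4.1} in the standard way. Since \eqref{06.08.1} is uniformly parabolic with $w_\e\in C^{2,1}$, we first take $u_T\in H_0^1(\Om)$ and $g\in L^2(Q)$. Then $u\in L^2(0,T;H^2(\Om)\cap H_0^1(\Om))\cap H^1(0,T;L^2(\Om))$, as in \eqref{12.25.1}. All integrations by parts in Section \ref{S4.1.1} are then legitimate, including the boundary terms and the Green formula \eqref{05.08.3} with $A=w_\e$, which failed for the degenerate problem. The general case follows by density: both sides of \eqref{06.10.2} are continuous in $(u_T,g)$ for fixed $\e$, because the weights $\xi^3e^{-2s\si}$ and $\xi e^{-2s\si}$ vanish to infinite order at $t=0,T$.

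We set $v=e^{-s\si}u$. Because $\Theta(t)\to\infty$ at $t=0,T$ while $\si\ge c\Theta$, property \eqref{gbz2} holds. We then write $e^{-s\si}g=P_1v+P_2v$ as in \eqref{gbz3}. The identity
\[
\|P_1v\|^2+\|P_2v\|^2+2(P_1v,P_2v)=\|e^{-s\si}g\|^2
\]
reduces everything to a lower bound on $(P_1v,P_2v)=\sum I_i(Q)$. We estimate it on the three regions $Q_{2R}^c$, $Q_{\e,2R}$, $Q_\e$ separately, as in estimations 1)--3):
\begin{itemize}
\item On $Q_{2R}^c$, Lemma \ref{12.26.L1} gives $|\nabla\eta|\ge C$ away from $B_R\cup\wh\om$, and the classical Fursikov--Imanuvilov absorption applies.
\item On $Q_{\e,2R}$, $\eta=|x|^{2-\al}$. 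The weighted Hardy inequality of Lemma \ref{08.16.L2}, in the form \eqref{01.16.1}, absorbs the low-order terms $s\xi^{3/2}v^2$ and $s\la^2\xi v^2$.
\item On $Q_\e$, the algebraic sign identities \eqref{01.17.1}--\eqref{01.17.3} for the explicit quartic $\psi_\e$ keep the leading terms positive. They leave only a remainder $Cs^2\la^3\e^{\al-2}\iint_{Q_\e}\xi^3v^2$, since $\xi\le C\xi^3$.
\end{itemize}
Summing and choosing $\la\ge\la_0$, $s\ge s_0$ large gives item 6).

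The gradient term on $Q_{2R}^c$ on the right must be removed. The cutoff $\zeta$ in item 7) does this: multiply the equation by $\xi\zeta\psi_\e^\al v$ and integrate by parts. This bounds $s\la\iint_{Q_{2R}^c}\xi\psi_\e^\al|\nabla v|^2$ by quantities already controlled on the left, plus a local term on $\wh\om$. This yields \eqref{01.18.1} and hence \eqref{06.10.1}.

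Finally we return to $u$. From
\[
e^{-s\si}\nabla u=\nabla v-s\la\xi v\nabla\eta
\]
we get
\[
\xi\psi_\e^\al|\nabla u|^2|\nabla\psi_\e|^2e^{-2s\si}\le 2\xi\psi_\e^\al|\nabla v|^2|\nabla\psi_\e|^2+2s^2\la^2\xi^3v^2\psi_\e^\al|\nabla\eta|^2|\nabla\psi_\e|^2.
\]
The second term must be dominated by the $v^2$ terms on the left of \eqref{06.10.1}. We check this region by region:
\begin{itemize}
\item On $Q_{\e,2R}$, $\psi_\e^\al|\nabla\eta|^2=(2-\al)^2|x|^{2-\al}$, so the term is controlled by $s^3\la^3\xi^3v^2\psi_\e^{2-\al}|\nabla\psi_\e|^4$ after one factor of $s$ and $\la$ is absorbed.
\item On $Q_\e$, the factor $\psi_\e^{2-\al}|\nabla\psi_\e|^4$ plays the same role.
\item On $Q_{2R}^c$, the term is bounded by $s^3\la^4\xi^3v^2|\nabla\eta|^4$ away from $\wh\om\cup B_R$, and by the observation term on $\wh\om$.
\end{itemize}
The $u^2$ term transfers directly because $u^2e^{-2s\si}=v^2$. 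This gives \eqref{06.10.2}.

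The main obstacle is uniformity in $\e$ on $Q_\e$. There $\psi_\e''\sim\e^{-1}$ and $\psi_\e'''\sim\e^{-2}$, so the eleventh term of $I_4$ produces coefficients of order $\e^{-2}$. I would first try to absorb these into the positive terms via \eqref{01.17.1}--\eqref{01.17.2}. Whatever cannot be absorbed is left explicitly as $Cs^2\la^3\e^{\al-2}\iint_{Q_\e}\xi^3u^2e^{-2s\si}$. This is acceptable for the statement, since the theorem allows that term on the right and it is only removed later by letting $\e\to0$, using $|Q_\e|\sim\e^N$ and $N+\al-2>0$.
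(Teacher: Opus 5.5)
Your proposal is correct and follows the paper's proof essentially step for step. It uses the same conjugation $v=e^{-s\si}u$ with the splitting \eqref{gbz3}, the same three-region lower bounds for $(P_1v,P_2v)$ (via \eqref{01.16.1} on $Q_{\e,2R}$ and \eqref{01.17.1}--\eqref{01.17.3} on $Q_\e$), the same cutoff argument of item 7) to remove the gradient term on $Q_{2R}^c$, and the same return to $u$. What you add are two things the paper leaves implicit: the regularity/density justification of the integrations by parts, and the region-by-region check that the cross term $s^3\la^3\xi^3v^2\psi_\e^\al|\nabla\eta|^2|\nabla\psi_\e|^2$ is dominated when passing from \eqref{06.10.1} to \eqref{06.10.2}.
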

\begin{remark}\label{06.10.R1}
In Theorem \ref{06.10.T1}, the functions $\eta$, $u$, $\xi$, and $\si$ are dependent on the parameter $\e > 0$.
\end{remark}

\subsection{Approximation}

 Let us denote $u_k = u_\varepsilon$ and $Q_k = Q_\varepsilon$ for $\varepsilon = \frac{1}{k}$.
Assume that $u_k$ ($k \in \mathbb{N}$) is the solution of \eqref{06.08.1}, and $u(\cdot)$ is the solution of \eqref{06.08.2} on $Q = (-\beta, T + \beta)$ with $u_k(T) = u(T) = u_T \in H^1(\Omega)$. Since $B_R \times (0, T)$ is a compact subset of $\Omega \times (-\beta, T + \beta)$, by Theorem 3.11 or  \cite[Theorem 3.14]{FF} and Lemma \ref{01.18.L1}, there exists a subsequence of $\{u_k\}_{k \in \mathbb{N}}$, which we still denote by the same notation, such that
\begin{equation*}
u_k \to u \quad \text{uniformly on } B_R \times (0, T).
\end{equation*}
Consequently, for any $h > 0$, there exists $k_0 > 0$ such that for all $k \geq k_0$,
\begin{equation*}
|u_k(x, t) - u(x, t)| < h \quad \text{on } B_R \times (0, T).
\end{equation*}
As $k \to +\infty$, we have
\begin{eqnarray}\label{06.10.3}
&&k^{2 - \alpha}\iint_{Q_k}\xi^3 u_k^2e^{-2s\sigma} \, dx \, dt\crr
&&\leq 2k^{2 - \alpha}\iint_{Q_k}\xi^3 (u_k - u)^2e^{-2s\sigma} \, dx \, dt + 2k^{2 - \alpha}\iint_{Q_k}\xi u^2e^{-2s\sigma} \, dx \, dt\crr
&&\leq Ck^{2 - \alpha}|B_k|h^2\int_0^T\Theta^3 e^{9\lambda|\eta|_\infty} e^{-2s\Theta e^{\lambda|\eta|_\infty}} \, dt \crr
&&\quad + Ck^{2 - \alpha}|B_k|\int_0^T\Theta^3 e^{9\lambda|\eta|_\infty} e^{-2s\Theta e^{\lambda|\eta|_\infty}}\left(\frac{1}{|B_k|}\int_{B_k} u^2 \, dx\right) \, dt \to 0,
\end{eqnarray}
where the convergence follows from the facts that $N \geq 2$, $\Theta^3 e^{9\lambda|\eta|_\infty} e^{-2s\Theta e^{\lambda|\eta|_\infty}}$ is bounded on $t \in (0, +\infty)$, and by the Lebesgue differentiation theorem,
\begin{equation*}
\lim_{k \to \infty}\frac{1}{|B_k|}\int_{B_k}u^2 \, dx = u(0, t) \quad \text{for all } t \in (0, T).
\end{equation*}
Let $\gamma \in (0, R)$ be given. By the definitions of $\eta$, $\xi$, and $\sigma$, for $\varepsilon \in (0, \frac{1}{2}\gamma)$, we know that
\begin{equation*}
\eta = |x|^\alpha \quad \text{for } x \in B_{2R} \setminus B_\gamma.
\end{equation*}
Hence, $\eta$, $\xi$, and $\sigma$ are independent of $\varepsilon \in (0, \frac{1}{2}\gamma)$ on $\Omega \setminus B_\gamma$. Note that $\eta$, $\psi_\varepsilon^\alpha$, $\nabla\psi_\varepsilon$, $\xi e^{-2s\sigma}$, and $\xi^3e^{-2s\sigma}$ are bounded continuous functions on $Q$, and $\eta(x) \to |x|^{2 - \alpha}$ everywhere on $B_{2R}$ as $\varepsilon \to 0$. By Theorem \ref{08.16.T1}, Corollary \ref{08.23.C1}, and the fact that the constants $C > 0$ in Theorem \ref{06.10.T1} are independent of $\varepsilon \in (0, \frac{1}{2}\gamma)$, we obtain
\begin{equation*}
\begin{split}
& s\lambda\iint_{Q_\gamma^c}\widehat{\xi} \psi^\alpha |\nabla u|^2e^{-2s\widehat{\sigma}}\, dx\, dt + s^3\lambda^4\iint_{Q_\gamma^c} \widehat{\xi}^3u^2\left(\psi^\alpha \left|\nabla \widehat{\eta}\right|^2\right)^2e^{-2s\widehat{\sigma}}\, dx\, dt\\
&\leq C\|e^{-s\widehat{\sigma}}g\|{L^2(Q)}^2 + Cs^3\lambda^4\iint_{\omega \times (0, T)}\widehat{\xi}^3u^2e^{-2s\widehat{\sigma}}\, dx\, dt,
\end{split}
\end{equation*}
where
\begin{equation}\label{06.10.4}
\widehat{\eta}(x)=
\begin{cases}
\eta(x), &x \in \Omega \setminus B_{2R},\\
|x|^{2 - \alpha}, &x \in B_{2R},
\end{cases}\quad \widehat{\xi}=\Theta e^{\lambda(8|\widehat{\eta}|_\infty+\widehat{\eta}(x))},\quad \widehat{\sigma}=\Theta(t)e^{10\lambda|\widehat{\eta}|_\infty}-\widehat{\xi}(x, t).
\end{equation}
Letting $\gamma \to 0$, we get
\begin{equation}\label{06.10.5}
\begin{split}
& s\lambda\iint_{Q}\widehat{\xi} \psi^\alpha |\nabla u|^2e^{-2s\widehat{\sigma}}\, dx\, dt + s^3\lambda^4\iint_{Q} \widehat{\xi}^3u^2\left(\psi^\alpha \left|\nabla \widehat{\eta}\right|^2\right)^2e^{-2s\widehat{\sigma}}\, dx\, dt\\
&\leq C\|e^{-s\widehat{\sigma}}g\|{L^2(Q)}^2 + Cs^3\lambda^4\iint_{\omega \times (0, T)}\widehat{\xi}^3u^2e^{-2s\widehat{\sigma}}\, dx\, dt,
\end{split}
\end{equation}
where the constants $C > 0$ depend only on $\alpha$, $R$, $T$, $N$, and $\Omega$.
Finally, for any $u_T \in L^2(\Omega)$, there exists a sequence $\{u_T^n\}_{n \in \mathbb{N}} \subset H^1(\Omega)$ such that $u_T^n \to u_T$ strongly in $L^2(\Omega)$. Let $u_n$ be the solution of \eqref{06.08.2} with $g_n = g$ and $u_n(T) = u_T^n$. Denote $U_n = u_n - u$. Then $U_n$ is the solution of the following equation
\begin{equation*}
\begin{cases}
\partial_t U_n + \Div(wU_n) = 0, &\text{in } Q,\\
U_n = 0, &\text{on } \partial Q, \\
U_n(T) = u_T^n - u_T, &\text{in } \Omega,
\end{cases}
\end{equation*}
and
\begin{equation*}
\iint_Q |\nabla U_n|^2w\, dx\, dt + \frac{1}{2}\int_\Omega U_n^2(0)\, dx\, dt = \frac{1}{2}\int_\Omega U_n^2(T)\, dx\, dt \to 0 \quad \text{as } n \to \infty.
\end{equation*}
This implies that (by Lemma \ref{08.15.L1})
\begin{equation*}
u_n \to u \quad \text{strongly in } L^2(0, T; H_0^1(\Omega; w)),
\end{equation*}
and then \eqref{06.10.5} holds for $u(T) = u_T \in L^2(\Omega)$.

We have thus obtained the following Theorem \ref{06.10.T2}.

\begin{theorem}\label{06.10.T2}
Let $T > 0$, and $\Omega \subset \mathbb{R}^N$ be a $C^3$ bounded domain with $0 \in \Omega$. Let $R > 0$ be such that $B_{4R} \subset \Omega$, $B_{3R}(x_0) \subset \omega$, and $B_{3R} \cap B_{3R}(x_0) = \varnothing$. Let $\widehat{\eta}$, $\widehat{\xi}$, $\widehat{\sigma}$ be defined in \eqref{06.10.4}. Then there exist constants $C > 0$ and $s_0 \geq 1$, $\lambda_0 \geq 1$, depending only on $\alpha$, $R$, $T$, $N$, and $\Omega$, such that for every solution $u(\cdot)$ of \eqref{06.08.2}, we have
\begin{equation*}
\begin{split}
& s\lambda\iint_{Q}\widehat{\xi} \psi^\alpha |\nabla u|^2e^{-2s\widehat{\sigma}}\, dx\, dt + s^3\lambda^4\iint_{Q} \widehat{\xi}^3u^2\left(\psi^\alpha \left|\nabla \widehat{\eta}\right|^2\right)^2e^{-2s\widehat{\sigma}}\, dx\, dt\\
&\leq C\|e^{-s\widehat{\sigma}}g\|{L^2(Q)}^2 + Cs^3\lambda^4\iint_{\omega \times (0, T)}\widehat{\xi}^3u^2e^{-2s\widehat{\sigma}}\, dx\, dt.
\end{split}
\end{equation*}
\end{theorem}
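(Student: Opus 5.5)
The plan is to obtain Theorem \ref{06.10.T2} as a limit of the uniform Carleman estimate of Theorem \ref{06.10.T1} for the regularized backward problems \eqref{06.08.1} with $\e=\frac1k$. The key point is that the constants $C,s_0,\la_0$ there do not depend on $\e$. The only $\e$-dependent bad term is the interior remainder
\[
Cs^2\la^3\e^{\al-2}\iint_{Q_\e}\xi^3u_\e^2e^{-2s\si}\df x\df t ,
\]
and this must be shown to vanish as $\e\to0$.

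The steps, in order, are as follows.

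\emph{Step 1 (reduction).} By density we may assume $u_T\in H^1(\Om)$, so that Corollary \ref{08.23.C1} applies. Let $u_k$ solve \eqref{06.08.1} with $u_k(T)=u_T$. Then $u_k\to u$ strongly in $L^2(Q)$. Bounds on $\nabla u_k$ in $L^2(0,T;H_0^1(\Om;w))$ follow from Lemma \ref{08.22.L1} together with $w\le w_k$. Extracting a subsequence, we get $\nabla u_k\rightharpoonup\nabla u$ weakly in weighted $L^2$.

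\emph{Step 2 (remainder term).} We estimate $k^{2-\al}\iint_{Q_k}\xi^3u_k^2e^{-2s\si}$. On $B_R\times(0,T)$ we use interior uniform convergence $u_k\to u$, as in \eqref{06.10.3}; this comes from the uniform $A_{1+2/N}$ bound of Lemma \ref{01.18.L1} and the Harnack/Hölder theory of \cite{FF}. Since $\xi^3e^{-2s\si}$ is bounded in time, the remainder is at most $Ck^{2-\al}|B_{1/k}|\sup_{B_R\times(0,T)}|u_k|^2$. This is $O(k^{2-\al-N})\to0$ because $N\ge2>2-\al$.

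\emph{Step 3 (lower semicontinuity away from the origin).} Fix $\ga\in(0,R)$. For $\e<\ga/2$, the weights $\eta,\xi,\si,\psi_\e$ agree with $\wh\eta,\wh\xi,\wh\si,|x|$ on $(\Om\se B_\ga)\times(0,T)$. There they are bounded and continuous, and $e^{-2s\si}$ kills the endpoints $t=0,T$. Restrict the left side of \eqref{06.10.2} to $Q_\ga^c$. Weak lower semicontinuity of the weighted $L^2$ norms of $\nabla u_k$ and strong $L^2$ convergence of $u_k$ then give the estimate on $Q_\ga^c$. For this we replace $|\nabla\psi_\e|^2$ by $1$ outside $B_\e$ and use the identity \eqref{12.18.2}. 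The right side converges: the $g$-term is fixed, and the $\wh\om$-term converges by strong $L^2$ convergence. Letting $\ga\to0$ by monotone convergence yields the full inequality on $Q$.

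\emph{Step 4 (general data).} For $u_T\in L^2(\Om)$, approximate by $u_T^n\in H^1(\Om)$. The energy identity for the difference $U_n$ gives $u_n\to u$ in $L^2(0,T;H_0^1(\Om;w))\cap C([0,T];L^2)$. Since the Carleman weights are bounded, both sides pass to the limit.

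I expect Step 2 to be the main obstacle. The factor $\e^{\al-2}$ blows up, so we need genuine pointwise (uniform) control of $u_k$ near $0$, uniform in $k$; weak or $L^2$ convergence alone would not suffice. The first thing to try is local boundedness: a Moser/De Giorgi $L^\infty$ bound for degenerate parabolic equations with $A_2$-type weights whose constants depend only on the $A_p$ constant, which Lemma \ref{01.18.L1} makes uniform. This gives $\sup_{B_R\times(\de,T-\de)}|u_k|\le C\|u_T\|_{L^2}$, which is enough since $|B_{1/k}|k^{2-\al}\to0$. Near $t=0$ and $t=T$ the weight $\xi^3e^{-2s\si}$ decays super-exponentially, which handles the time endpoints.
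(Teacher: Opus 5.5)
Your plan is essentially the paper's own argument: the $\e$-uniform estimate of Theorem \ref{06.10.T1}, then killing the $\e^{\al-2}$ remainder via interior uniform control of $u_k$ near the origin (using the uniform $A_{1+2/N}$ constants of Lemma \ref{01.18.L1}) together with $k^{2-\al}|B_{1/k}|\to0$, then passing to the limit on $Q_\ga^c$ where the weights are $\e$-independent, letting $\ga\to0$, and finally a density argument in $u_T$. The only step that needs more than you wrote is near $t=T$, where smallness of $\xi^3e^{-2s\si}$ alone does not beat the factor $k^{2-\al}$; it closes once you add the Hardy inequality \eqref{08.20.1}, which gives $\e^{\al-2}\int_{B_\e}u_k^2\,\df x\le C\int_\Om|\nabla u_k|^2w_k\,\df x$ with $C$ independent of $\e$, together with the energy bound of Lemma \ref{08.22.L1} (the paper itself simply asserts uniform convergence on all of $B_R\times(0,T)$ and does not address this endpoint).
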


\section{Observability inequality}\label{S5}

 Now, let us set $g = 0$ in equation \eqref{06.08.2}. Upon multiplying both sides of \eqref{06.08.2} by $u(\cdot)$ and integrating over the domain $\Omega \times (0, t)$, we arrive at the following identity:
\begin{equation*}
\begin{split}
\frac{1}{2}\int_{\Omega} u^2(t) \, \mathrm{d}x = \frac{1}{2}\int_{\Omega} u^2(0) \, \mathrm{d}x + \iint_{Q} |\nabla u|^2 w \, \mathrm{d}x \, \mathrm{d}t.
\end{split}
\end{equation*}
This indicates that $\int_{\Omega} u^2(t) \, \mathrm{d}x$ is a non-decreasing function on the interval $(0, T)$. Consequently, we can derive the following chain of inequalities:
\begin{eqnarray*}
\int_{\Omega} u^2(0) \, \mathrm{d}x
&\leq& \frac{2}{T}\int_{\frac{T}{4}}^{\frac{3T}{4}}\int_{\Omega} u^2(x, t) \, \mathrm{d}x \, \mathrm{d}t \leq \frac{C}{T}\int_{\frac{T}{4}}^{\frac{3T}{4}} \int_{\Omega} \psi^{\alpha} |\nabla u|^2 \, \mathrm{d}x \, \mathrm{d}t\\
&\leq& C\int_{\frac{T}{4}}^{\frac{3T}{4}}\int_{\Omega} \widehat{\xi} \psi^{\alpha} |\nabla u|^2 e^{-2s\widehat{\sigma}} \, \mathrm{d}x \, \mathrm{d}t \leq C\iint_{\omega \times (0, T)} \widehat{\xi}^3 u^2 e^{-2s\widehat{\sigma}} \, \mathrm{d}x \, \mathrm{d}t\\
&\leq& C\iint_{\omega \times (0, T)} u^2 \, \mathrm{d}x \, \mathrm{d}t.
\end{eqnarray*}
These inequalities are justified by the following facts:
\begin{equation*}
\begin{split}
\widehat{\xi} e^{-2s\widehat{\sigma}} \geq C   \text{ on } \Omega \times \left(\frac{T}{4}, \frac{3T}{4}\right), \quad \text{and} \quad \widehat{\xi}^3 e^{-2s\widehat{\sigma}} \leq C   \text{ on } Q,
\end{split}
\end{equation*}
where the second inequality employs Lemma \ref{08.15.L1}, and the fourth inequality utilizes Theorem \ref{06.10.T2}. Here, the constants $C > 0$ depend solely on $\alpha$, $R$, $T$, $N$, and $\Omega$. This leads us to the following Theorem \ref{06.10.T3}.

\begin{theorem}\label{06.10.T3}
Let $T > 0$, and let $\Omega \subset \mathbb{R}^N$ be a bounded domain of class $C^3$ with $0 \in \Omega$. Let $\omega \subset \Omega$ be a non-empty open subset such that $0 \notin \omega$. Then, there exists a constant $C > 0$, depending only on $\alpha$, $T$, $\omega$, and $\Omega$, such that for all solutions of \eqref{06.08.2} with $g = 0$, there is
\begin{equation*}
\|u(0)\|_{L^2(\Omega)}^2 \leq C\iint_{\omega \times (0, T)} u^2 \, \mathrm{d}x \, \mathrm{d}t.
\end{equation*}
\end{theorem}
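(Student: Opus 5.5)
The plan is to derive Theorem \ref{06.10.T3} from the Carleman estimate of Theorem \ref{06.10.T2} with $g=0$, after reducing the general control set $\omega$ to the geometric setting assumed there.

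\emph{Step 1: the geometric reduction.} Since $\omega$ is open, nonempty and $0\notin\omega$ (after shrinking $\omega$ we may assume $\dist(0,\omega)>0$), pick $x_0\in\omega$. Choose $R>0$ small enough that $B_{3R}(x_0)\subset\omega$, $B_{4R}\subset\Omega$ and $B_{3R}\cap B_{3R}(x_0)=\varnothing$; the last condition holds as soon as $6R<|x_0|$. With these choices Lemma \ref{12.26.L1} gives the weight $\eta$, and Theorem \ref{06.10.T2} applies with $\wh\omega=B_{2R}(x_0)\subset\omega$. Since $R$ depends only on $\omega$ and $\Omega$, the constants depend only on $\alpha,T,\omega,\Omega$.

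\emph{Step 2: energy monotonicity.} For the backward problem \eqref{06.08.2} with $g=0$, the energy identity (Lemma \ref{12.09.L1} and Corollary \ref{12.11.C1}, read in reversed time) shows that $t\mapsto\|u(t)\|_{L^2(\Omega)}$ is nondecreasing. Hence
\[
\|u(0)\|^2_{L^2(\Omega)}\le \frac{2}{T}\int_{T/4}^{3T/4}\|u(t)\|^2_{L^2(\Omega)}\,\mathrm{d}t .
\]

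\emph{Step 3: comparison with the Carleman terms.} Apply the Poincar\'e-type inequality \eqref{08.15.10} of Lemma \ref{08.15.L1} for each $t$ to bound $\|u(t)\|_{L^2(\Omega)}^2$ by $C\int_\Omega|x|^\alpha|\nabla u|^2\,\mathrm{d}x$. Since $\psi=|x|$ in the limit, this is the gradient term on the left of Theorem \ref{06.10.T2}, up to the weight. Fix $s=s_0$ and $\lambda=\lambda_0$. On $[T/4,3T/4]$ the function $\Theta$ is bounded above and below, and $\wh\eta$ is bounded, so $\wh\xi e^{-2s\wh\sigma}\ge c>0$ there. Therefore
\[
\int_{T/4}^{3T/4}\!\!\int_\Omega |x|^\alpha|\nabla u|^2 \le C\iint_Q \wh\xi\,\psi^\alpha|\nabla u|^2e^{-2s\wh\sigma}.
\]
By Theorem \ref{06.10.T2} with $g=0$, the right-hand side is at most $C\iint_{\omega\times(0,T)}\wh\xi^3u^2e^{-2s\wh\sigma}$. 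Finally, $\wh\xi^3e^{-2s\wh\sigma}$ is bounded on $Q$: we have $\wh\sigma\ge \Theta(e^{10\lambda|\wh\eta|_\infty}-e^{9\lambda|\wh\eta|_\infty})\ge c\,\Theta$, so the exponential decay in $\Theta$ dominates the polynomial factor $\Theta^3$ as $t\to0$ or $t\to T$. Chaining these bounds gives the claimed inequality.

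\emph{Main obstacle.} The delicate point is the validity of Theorem \ref{06.10.T2} for the genuine degenerate solution with arbitrary $u_T\in L^2(\Omega)$. This requires the limit $\e\to0$ argument in the preceding subsection, in particular that the interior term $\e^{\alpha-2}\iint_{Q_\e}\xi^3u^2e^{-2s\sigma}$ vanishes, together with the density step. I take that theorem as given. The remaining check is that the constant in the weight comparison near $t=0,T$ is uniform, which follows from the explicit form of $\Theta$.
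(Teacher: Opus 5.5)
Your proposal is correct and follows essentially the same route as the paper: monotonicity of $t\mapsto\|u(t)\|_{L^2(\Omega)}$ for the backward problem, averaging over $[T/4,3T/4]$, the Poincar\'e inequality \eqref{08.15.10}, the Carleman estimate of Theorem \ref{06.10.T2} with $g=0$ and fixed $s,\lambda$, and the bounds $\widehat{\xi}e^{-2s\widehat{\sigma}}\ge c$ on $\Omega\times(T/4,3T/4)$ and $\widehat{\xi}^3e^{-2s\widehat{\sigma}}\le C$ on $Q$. The only difference is that you write out the choice of $x_0$ and $R$ and the verification of these two weight bounds, which the paper leaves implicit.
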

We recall that the system \eqref{12.14.1} is said to be null controllable if, for any $\varphi_0 \in L^2(\Omega)$, there exists $f \in L^2(Q)$ such that
\begin{equation*}
\varphi(T) = 0 \quad \text{in } \Omega.
\end{equation*}
By applying the standard  Hilbert Uniqueness Method, we obtain the following Corollary \ref{06.10.C1}.
\begin{corollary}\label{06.10.C1}
The system \eqref{12.14.1} is null controllable.
\end{corollary}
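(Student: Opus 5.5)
The plan is the classical Hilbert Uniqueness Method duality argument, with Theorem \ref{06.10.T3} as the observability input. Fix $\varphi_0\in L^2(\Omega)$. For $\varepsilon>0$ consider the penalized functional on $L^2(\Omega)$
\begin{equation*}
J_\varepsilon(u_T)=\frac12\iint_{\omega\times(0,T)}u^2\,\mathrm{d}x\,\mathrm{d}t+\varepsilon\|u_T\|_{L^2(\Omega)}+\int_\Omega u(0)\varphi_0\,\mathrm{d}x,
\end{equation*}
where $u$ solves the backward problem \eqref{06.08.2} with $g=0$ and $u(T)=u_T$. Existence and uniqueness of $u$ in the weighted space $W$ follow from Lemma \ref{12.09.L1} after the change of time variable $t\mapsto T-t$.

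First I will check that $J_\varepsilon$ attains its minimum. It is continuous: the map $u_T\mapsto u$ is linear and bounded into $L^2(Q)$ and into $C([0,T];L^2(\Omega))$ by Lemma \ref{12.09.L1}. It is also strictly convex in the quadratic part plus convex. For coercivity I use Theorem \ref{06.10.T3}, which gives $|\int_\Omega u(0)\varphi_0|\le C\|\varphi_0\|\,\|u\|_{L^2(\omega\times(0,T))}$. The standard Fabre--Puel--Zuazua argument then applies. Suppose $\|u_T^n\|\to\infty$ and normalize $\tilde u_T^n=u_T^n/\|u_T^n\|$. If $\liminf\|\tilde u^n\|_{L^2(\omega\times(0,T))}>0$, the quadratic term dominates. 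Otherwise, along a subsequence $\tilde u_T^n\rightharpoonup \tilde u_T$ weakly in $L^2(\Omega)$, and the limit satisfies $\tilde u=0$ on $\omega\times(0,T)$. Theorem \ref{06.10.T3} then forces $\tilde u(0)=0$, so $\liminf J_\varepsilon(u_T^n)/\|u_T^n\|\ge\varepsilon$.

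Next I will derive the Euler--Lagrange condition. Take a minimizer $\hat u_T$ with associated $\hat u$ and set $f_\varepsilon=\chi_\omega\hat u$. Let $\varphi_\varepsilon$ solve \eqref{12.14.1} with control $f_\varepsilon$. To justify the duality identity
\begin{equation*}
\int_\Omega\varphi_\varepsilon(T)u_T\,\mathrm{d}x-\int_\Omega\varphi_0u(0)\,\mathrm{d}x=\iint_Q\chi_\omega f_\varepsilon u\,\mathrm{d}x\,\mathrm{d}t
\end{equation*}
for all $u_T$, I will use the weak formulation in Definition \ref{08.16.D1} with test function $u$, which lies in $W$. The optimality condition then yields $\|\varphi_\varepsilon(T)\|\le\varepsilon$.

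Finally, comparing $J_\varepsilon(\hat u_T)\le J_\varepsilon(0)=0$ with observability gives $\frac12\|f_\varepsilon\|^2\le C\|\varphi_0\|\,\|f_\varepsilon\|$, so $\|f_\varepsilon\|_{L^2(Q)}\le 2C\|\varphi_0\|$ uniformly in $\varepsilon$. I then extract a weakly convergent subsequence $f_\varepsilon\rightharpoonup f$ in $L^2(Q)$ and pass to the limit in the weak formulation, using Lemma \ref{12.09.L1} and the argument of Step 4 in Theorem \ref{08.16.T1}. This gives $\varphi_\varepsilon(T)\rightharpoonup\varphi(T)$, and weak lower semicontinuity then gives $\varphi(T)=0$.

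The main obstacle is the functional setting. The right-hand side is $\chi_\omega f$ with $f\in L^2(Q)$ rather than $fw^{-1}\in L^2(\Omega;w)$. This is harmless, because $w$ is bounded below on $\omega$ (recall $\dist(0,\omega)>0$), so Lemma \ref{12.09.L1} applies. The duality pairing between the forward and backward problems in $W$ must also be justified. I will handle it by an approximation through $H^1$ data, in the spirit of the end of Section \ref{S4}.
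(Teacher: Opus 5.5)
Your proposal is correct and follows the paper's route. The paper deduces Corollary \ref{06.10.C1} from the observability inequality of Theorem \ref{06.10.T3} by invoking the standard Hilbert Uniqueness Method without details, and your penalized functional $J_\varepsilon$ is one of the usual ways to carry that out: the bound $\|\varphi_\varepsilon(T)\|_{L^2(\Omega)}\le\varepsilon$, the $\varepsilon$-uniform estimate on $\|f_\varepsilon\|_{L^2(Q)}$ and the weak limit all work. One point needs care: the paper's weak formulation only allows test functions vanishing at $t=T$, whereas $u(T)=u_T\neq 0$, so derive the duality identity instead from the integration-by-parts formula for two elements of $W$ (both $\varphi_\varepsilon$ and $u$ lie in $W$), which makes your $H^1$-approximation unnecessary.
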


\end{document}